\DeclareSymbolFontAlphabet{\mathbb}{AMSb}
\DeclareSymbolFontAlphabet{\mathbbl}{bbold}
\DeclareMathOperator{\id}{id}
\DeclareMathOperator{\Map}{Map}
\DeclareMathOperator{\Hom}{Hom}
\DeclareMathOperator{\map}{map}
\DeclareMathOperator{\Nm}{Nm}
\DeclareMathOperator{\K}{K}
\DeclareMathOperator{\THH}{THH}
\DeclareMathOperator{\TC}{TC}
\DeclareMathOperator{\TR}{TR}
\DeclareMathOperator{\HH}{HH}
\DeclareMathOperator{\gr}{gr}
\DeclareMathOperator{\cyc}{cyc}
\DeclareMathOperator{\Fr}{Fr}
\DeclareMathOperator{\colim}{colim}
\DeclareMathOperator{\Fun}{Fun}
\DeclareMathOperator{\perf}{perf}
\DeclareMathOperator{\can}{can}
\DeclareMathOperator{\op}{op}
\DeclareMathOperator{\nerve}{N}
\DeclareMathOperator{\gen}{gen}
\DeclareMathOperator{\Fin}{Fin}
\DeclareMathOperator{\Orb}{Orb}
\DeclareMathOperator{\Mack}{Mack}
\DeclareMathOperator{\Fix}{Fix}
\DeclareMathOperator{\epi}{epi}
\DeclareMathOperator{\Span}{Span}
\DeclareMathOperator{\Free}{Free}
\DeclareMathOperator{\Gr}{Gr}
\DeclareMathOperator{\ds}{ds}
\newcommand{\Sp}{\mathrm{Sp}}
\newcommand{\GrSp}{\mathrm{GrSp}}
\newcommand{\Spaces}{\mathcal{S}}
\newcommand{\Alg}{\mathrm{Alg}}
\newcommand{\coAlg}{\mathrm{coAlg}}
\newcommand{\CAlg}{\mathrm{CAlg}}
\newcommand{\Mon}{\mathrm{Mon}}
\newcommand{\Grp}{\mathrm{Grp}}
\newcommand{\Mod}{\mathrm{Mod}}
\newcommand{\Cat}{\cat{C}\mathrm{at}}
\newcommand{\CycSp}{\mathrm{CycSp}}
\newcommand{\GrCycSp}{\mathrm{GrCycSp}}
\newcommand{\TCart}{\mathrm{TCart}}
\newcommand{\OrbT}{\mathrm{Orb}_{\T}}
\newcommand{\epicyclic}{\tilde{\Lambda}}
\newcommand{\paracyclic}{\Lambda_{\infty}}
\renewcommand{\S}{\mathbf{S}}
\newcommand{\T}{\mathbf{T}}
\newcommand{\N}{\mathbf{N}}
\newcommand{\Z}{\mathbf{Z}}
\newcommand{\E}{\mathbf{E}}
\newcommand{\W}{\mathbf{W}}
\newcommand{\cat}[1]{\mathscr{#1}}
\newcommand{\B}{\mathrm{B}}
\newtheorem{theorem}{Theorem}[subsection]
\newtheorem{proposition}[theorem]{Proposition}
\newtheorem{lemma}[theorem]{Lemma}
\newtheorem{corollary}[theorem]{Corollary}
\newenvironment{theorem*}[1]{\theoremvar}{\endtheoremvar}
\theoremstyle{definition}
\newtheorem{definition}[theorem]{Definition}
\newtheorem{remark}[theorem]{Remark}
\newtheorem{example}[theorem]{Example}
\newtheorem{notation}[theorem]{Notation}
\newtheorem{warning}[theorem]{Warning}
\newtheorem{construction}[theorem]{Construction}
\title{On curves in K-theory and TR}
\author{Jonas McCandless}
\address{
Fachbereich Mathematik und Informatik\\
Westfälische Wilhelms--Universität Münster\\
Germany}
\email{\href{mailto:mccandless.jonas@gmail.com}{mccandless.jonas@gmail.com}}
\urladdr{\url{https://www.sites.google.com/view/jonasmccandless/}}
\begin{document}

\begin{abstract}
We prove that $\TR$ is corepresentable by the reduced topological Hochschild homology of the flat affine line $\S[t]$ as a functor defined on the $\infty$-category of cyclotomic spectra with values in the $\infty$-category of spectra with Frobenius lifts, refining a result of Blumberg--Mandell. We define the notion of an integral topological Cartier module using Barwick's formalism of spectral Mackey functors on orbital $\infty$-categories, extending the work of Antieau--Nikolaus in the $p$-typical setting. As an application, we show that $\TR$ evaluated on a connective $\E_1$-ring admits a description in terms of the spectrum of curves on algebraic $\K$-theory generalizing the work of Hesselholt and Betley--Schlichtkrull.
\end{abstract}

\maketitle

\setcounter{tocdepth}{2}
\tableofcontents

\section{Introduction} \label{section:introduction}
Topological cyclic homology provides an effective tool for studying the invariant determined by algebraic $\K$-theory by means of the cyclotomic trace. The classical construction of topological cyclic homology using equivariant stable homotopy theory given by Bökstedt--Hsiang--Madsen~\cite{BHM93} is facilitated by another invariant called $\TR$ together with the additional structure of an operator referred to as the Frobenius. In~\cite{NS18}, Nikolaus--Scholze demonstrate that the construction of topological cyclic homology admits a drastic simplification bypassing the use of equivariant stable homotopy theory, and Bhatt--Morrow--Scholze~\cite{BMS19} use the foundational work of Nikolaus--Scholze to construct motivic filtrations on topological Hochschild homology and its variants identifying the graded pieces with completed prismatic cohomology in the sense of Bhatt--Scholze~\cite{BS19}. The invariant given by $\TR$ carries important arithmetic information by itself. The calculations of the $p$-adic $\K$-theory of local fields obtained by Hesselholt--Madsen~\cite{HM03,HM04} and Geisser--Hesselholt~\cite{GH06} rely on the relationship between $\TR$ and the de Rham--Witt complex with log poles. In particular, Hesselholt--Madsen~\cite{HM03} confirmed the Lichtenbaum--Quillen conjecture for $p$-adic fields prior to the work of Rost--Voevodsky on the Bloch--Kato conjecture~\cite{Voe03,Voe11}. These calculations are in turn based on the previous work of Hesselholt~\cite{Hes96} and Hesselholt--Madsen~\cite{HM97}. 

\subsection{Statement of results}
If $I \subseteq A$ is a two-sided ideal of a ring $A$, then the relative algebraic $\K$-theory spectrum $\K(A, I)$ is defined as the fiber of the map of spectra $\K(A) \to \K(A/I)$ induced by the canonical ring homomorphism $A \to A/I$. The starting point of this paper is the following result of Hesselholt~\cite[Theorem 3.1.10]{Hes96}, which asserts that if $R$ is a commutative $\Z/p^j$-algebra for some integer $j \geq 1$, then there is a natural equivalence of $p$-complete spectra
\[
	\TR(R) \simeq \varprojlim \Omega \K(R[t]/t^n, (t)).
\]
The inverse limit appearing on the right hand side of the equivalence is referred to as the spectrum of curves on the algebraic $\K$-theory of $R$, and was defined by Hesselholt~\cite{Hes96} based on previous work of Bloch~\cite{Blo77} on the relationship between crystalline cohomology and algebraic $\K$-theory. Betley--Schlichtkrull~\cite[Theorem 1.3]{BS05} extend the result above to associative rings after profinite completion, where $\TR$ is replaced by $\TC$. In this case, the inverse limit defining the spectrum of curves on $\K$-theory is replaced by a limit over a diagram  which additionally encodes the transfer maps $R[t]/t^m \to R[t]/t^{mn}$ determined by $t \mapsto t^n$. We recall the following notation:

\begin{notation} \label{notation:truncated_over_sphere}
In the following, we will let $\S[t]$ denote the $\E_\infty$-ring defined by $\S[t] = \Sigma^\infty_+ \Z_{\geq 0}$. Note that $\S[t]$ is not the free $\E_\infty$-ring on one generator, but the underlying $\E_1$-ring of $\S[t]$ is the free $\E_1$-ring on one generator since $\Z_{\geq 0}$ is the free $\E_1$-monoid on one generator in the $\infty$-category of spaces. Let $\S[t]/t^n$ denote the $\E_\infty$-ring defined by the following pushout of $\E_\infty$-rings
\[
\begin{tikzcd}
	\S[t] \arrow{r}{t \mapsto t^n} \arrow{d}{t \mapsto 0} & \S[t] \arrow{d} \\
	\S \arrow{r} & \S[t]/t^n
\end{tikzcd}
\]
If $R$ is a connective $\E_1$-ring, then we define the $\E_1$-ring $R[t]/t^n$ by $R[t]/t^n = R \otimes \S[t]/t^n$, and we have that $\pi_\ast(R[t]/t^n) \simeq (\pi_\ast R)[t]/t^n$. We obtain a map of connective $\E_1$-rings $R[t]/t^n \to R$ such that the kernel of the induced ring homomorphism
\[
	\pi_0(R[t]/t^n) \simeq (\pi_0 R)[t]/t^n \to \pi_0 R
\]
is given by the nilpotent ideal $(t)$. If $E : \Alg^{\mathrm{cn}} \to \Sp$ is a functor, then we will let $E(R[t]/t^n, (t))$ denote the fiber of the induced map of spectra $E(R[t]/t^n) \to E(R)$. In the following, we will be interested in situation where $E = \K$ or $E = \TC$.
\end{notation}

\begin{remark}
If $R$ is a discrete $\mathrm{H}\Z$-algebra, then $R \otimes \S[t]/t^n$ is discrete with $\pi_0(R \otimes \S[t]/t^n) \simeq R[t]/t^n$. In general, if $R$ is a connective $\E_1$-ring, then $R \otimes \S[t]/t^n$ is not necessarily discrete. 
\end{remark}

As an application of the formalism developed in this paper, we obtain a common generalization of the results of Hesselholt and Betley--Schlichtkrull discussed above (see Corollary~\ref{corollary:TR_curves}). 

\begin{theorem*}{A} \label{theorem:introduction_main_result}
If $R$ is a connective $\E_1$-ring, then there is a natural equivalence of spectra
\[
	\TR(R) \simeq \varprojlim \Omega \K(R[t]/t^n, (t)). 
\]
\end{theorem*}

Note that in the setting of Theorem~\ref{theorem:introduction_main_result}, there is an equivalence of spectra
\[
	\varprojlim \Omega \K(R[t]/t^n, (t)) \simeq \varprojlim \Omega \TC(R[t]/t^n, (t)) 
\]
by virtue of the Dundas--Goodwillie--McCarthy theorem~\cite{BGM12}. The results of Betley--Schlichtkrull and Hesselholt discussed above rely on this equivalence combined with an analysis of the fixed points of $\THH(R[t]/t^n)$ by finite cyclic groups using~\cite{HM97}. In fact, the Frobenius endomorphism of $\TR$ can be expressed in terms of certain transfer maps on the spectrum of curves on $\K$-theory under the equivalence of Theorem~\ref{theorem:introduction_main_result} (cf. Remark~\ref{remark:frobenius_TR_in_terms_of_curves}). The proof of Theorem~\ref{theorem:introduction_main_result} proceeds by completely different methods which we summarize below. 

\subsection{Methods}
The main technical contribution of this paper is a construction of $\TR$ which bypasses the use of equivariant stable homotopy theory. This is analogous to the construction of topological cyclic homology recently given by Nikolaus--Scholze~\cite{NS18} under suitable boundedness assumptions. We will begin by briefly reviewing the classical construction of $\TR$ following~\cite{HM97,BM16}, and refer the reader to \textsection\ref{subsection:genuine_cycsp} for a comprehensive account of the classical construction.

\begin{construction}
A genuine cyclotomic spectrum is a genuine $\T$-spectrum $X$ with respect to the family of finite cyclic subgroups $C_k$ of $\T$ together with compatible equivalences $X^{\Phi C_k} \simeq X$ for every $k \geq 1$, where $X^{\Phi C_k}$ denotes the geometric fixedpoints for the action of $C_k$ on $X$. For instance, if $R$ is a connective $\E_1$-ring, then $\THH(R)$ admits the structure of a genuine cyclotomic spectrum~\cite{BHM93,HM97}. If $(m, n)$ is a pair of positive integers with $m = ln$, then the restriction map $R : X^{C_m} \to X^{C_n}$ is the map of genuine $\T$-spectra defined by
\[
	X^{C_m} \simeq (X^{C_l})^{C_n} \to (X^{\Phi C_l})^{C_n} \simeq X^{C_n},
\]
where the final equivalence is induced by the genuine cyclotomic structure of $X$. We have that 
\[
	\TR(X) = \mathrm{lim}_{n, R} X^{C_n},
\]
where the limit is indexed over the set of positive integers regarded as a poset under divisibility. The collection of genuine cyclotomic spectra assemble into an $\infty$-category $\CycSp^{\gen}$ (see Definition~\ref{definition:cycsp_gen}), and there is a canonical functor of $\infty$-categories $\CycSp^{\gen} \to \CycSp$ which restricts to an equivalence on the full subcategories of those objects whose underlying spectrum is bounded below~\cite{NS18}. Consequently, if $X$ is a cyclotomic spectrum whose underlying spectrum is bounded below, then we may evaluate $\TR : \CycSp^{\gen} \to \Sp$ on $X$ using this equivalence.
\end{construction}

Throughout this paper, we will be interested in the reduced topological Hochschild homology of the $\E_\infty$-ring $\S[t]$, whose construction we briefly recall. The map of $\E_\infty$-rings $\S[t] \to \S$ given by $t\mapsto 0$ induces a map of cyclotomic spectra $\THH(\S[t]) \to \S$, where the sphere spectrum $\S$ is equipped with the trivial cyclotomic structure. The reduced topological Hochschild homology of the $\E_\infty$-ring $\S[t]$ is the cyclotomic spectrum defined by
\[
	\widetilde{\THH}(\S[t]) = \mathrm{fib}(\THH(\S[t]) \to \S).
\]
The cyclotomic structure of the reduced topologogical Hochschild homology of $\S[t]$ admits a more refined structure, namely the structure of a spectrum with Frobenius lifts\footnote{This is referred to as a cyclotomic spectrum with Frobenius lifts in~\cite{NS18,AN20,KN19} (cf. Warning~\ref{warning:cycspfr_terminology})}. Informally, the datum of a spectrum with Frobenius lifts is given by a spectrum $X$ with an action of $\T$ together with a collection of compatible $\T$-equivariant maps of spectra
\[
	\psi_k : X \to X^{hC_k}
\]
for every integer $k \geq 1$. Note that for every prime $p$, the $\T$-equivariant map of spectra
\[
	X \xrightarrow{\psi_p} X^{hC_p} \xrightarrow{\can} X^{tC_p}
\]
endows the underlying spectrum with $\T$-action of $X$ with the structure of a cyclotomic spectrum. The notion of a $p$-typical spectrum with Frobenius lift was introduced by Nikolaus--Scholze~\cite{NS18} and Antieau--Nikolaus~\cite{AN20}. In the $p$-typical situation we only require the existence of the map $\psi_p$ in sharp contrast to the integral situation, where we also require coherences between these maps. Returning to $\widetilde{\THH}(\S[t])$, there is an equivalence of cyclotomic spectra
\[
	\widetilde{\THH}(\S[t]) \simeq \bigoplus_{i \geq 1} \Sigma^\infty_+(S^1/C_i),
\]
where $S^1/C_i$ denotes the quotient space whose $\T$-action is given by $x \mapsto \lambda x$ for every element $\lambda$ of $\T$, and the cyclotomic structure of the right hand side of the equivalence above is induced by the following $\T$-equivariant map of spaces
\[
	S^1/C_i \xrightarrow{x \mapsto \sqrt[p]{x}} (S^1/C_{pi})^{hC_p} \xrightarrow{\can} (S^1/C_{pi})^{tC_p}
\]
for every prime $p$ (cf. Example~\ref{example:epibarconstruction_N}). Consequently, the cyclotomic Frobenius map of $\widetilde{\THH}(\S[t])$ is canonically equivalent to the following $\T$-equivariant composite map of spectra
\[
	\widetilde{\THH}(\S[t]) \xrightarrow{\psi_p} \widetilde{\THH}(\S[t])^{hC_p} \xrightarrow{\can} \widetilde{\THH}(\S[t])^{tC_p}
\]
for every prime $p$. A substantial amount of this paper is devoted to making the discussion above precise. To that end, we introduce the following notions:

\begin{itemize}[leftmargin=2em, topsep=7pt, itemsep=7pt]
	\item The Witt monoid is the $\E_1$-monoid defined by the semidirect product $\W = \T \rtimes \N^{\times}$, where the multiplicative monoid $\N^{\times}$ acts on the circle $\T$ by covering maps of positive degree (see Construction~\ref{construction:Wittmonoid}). Let $\B\W$ denote the $\infty$-category with one object and $\W$ as endomorphisms. We will define the notion of a spectrum with Frobenius lifts as a spectrum with a right action of the Witt monoid $\W$ which in turn is the datum of a functor of $\infty$-categories $\B\W^{\op} \to \Sp$. This precisely encodes the datum of a spectrum $X$ with an action of the circle $\T$ together with compatible $\T$-equivariant maps $\psi_k : X \to X^{hC_k}$ for every integer $k \geq 1$. The collection of spectra with Frobenius lifts assemble into an $\infty$-category $\Sp^{\Fr}$ and we prove that the reduced topological Hochschild homology of the $\E_\infty$-ring $\S[t]$ refines to an object of this $\infty$-category. Furthermore, there is a canonical functor of $\infty$-categories $\Sp^{\Fr} \to \CycSp$ which is described informally above. The insight that the Witt monoid parametrizes Frobenius lifts is present in the unpublished work of Goodwillie~\cite{Good90} on the cyclotomic trace. 

	\item We prove that $\B\W$ is an orbital $\infty$-category in the sense of~\cite{BDGNS16}, and we define the notion of an integral topological Cartier module as a spectral Mackey functor on $\B\W$ in the sense of Barwick~\cite{Bar17}. This precisely encodes the datum of a spectrum $M$ with an action of the circle $\T$ together with compatible $\T$-equivariant factorizations
	\[
		M_{hC_k} \to M \to M^{hC_k}
	\]
	of the norm map for the cyclic group $C_k$ for every integer $k \geq 1$. This extends the definition of a $p$-typical topological Cartier module given by Antieau--Nikolaus~\cite{AN20} to the integral situation. The collection of topological Cartier modules assemble into an $\infty$-category $\TCart$, and we will prove that $\TR$ refines to a functor with values in this $\infty$-category.    
\end{itemize}

In general, there is a forgetful functor of $\infty$-categories $\TCart \to \Sp^{\Fr}$, and it follows that we may regard $\TR(X)$ as a spectrum with Frobenius lifts for every cyclotomic spectrum $X$, where each Frobenius lift of $\TR(X)$ is given by the $\T$-equivariant map of spectra
\[
	\TR(X) \simeq \TR(X)^{C_k} \to \TR(X)^{hC_k}.
\]
Finally, the mapping spectrum $\map_{\CycSp}(\widetilde{\THH}(\S[t]), X)$ refines to a spectrum with Frobenius lifts using that the reduced topological Hochschild homology of $\S[t]$ admits the structure of a bimodule over $\S[\W]$ by left and right multiplication (see Example~\ref{example:witt_monoid_right_multiplication}). With these notions in place, we state the following result (see Theorem~\ref{theorem:comparison_TR} and Proposition~\ref{proposition:TR_right_adjoint}).

\begin{theorem*}{B} \label{theorem:introduction_corepresentability_TR}
For every cyclotomic spectrum $X$ whose underlying spectrum is bounded below, there is a natural equivalence of spectra with Frobenius lifts
\[
	\TR(X) \simeq \map_{\CycSp}(\widetilde{\THH}(\S[t]), X).
\]
Furthermore, there is an adjunction of $\infty$-categories
\[
\begin{tikzcd}
	\Sp^{\Fr} \arrow[yshift=0.7ex]{r} & \CycSp \arrow[yshift=-0.7ex]{l}{\TR}
\end{tikzcd}
\]	
where the left adjoint is given by the canonical functor $\Sp^{\Fr} \to \CycSp$. 
\end{theorem*}

We remark that a variant of Theorem~\ref{theorem:introduction_corepresentability_TR} above was previously obtained by Blumberg--Mandell~\cite{BM16} using point-set models for genuine cyclotomic spectra. More precisely, Blumberg--Mandell show that the functor $\TR : \CycSp^{\gen} \to \Sp$ is corepresentable by $\widetilde{\THH}(\S[t])$. Theorem~\ref{theorem:introduction_corepresentability_TR} above asserts that $\TR$ is additionally corepresentable as a functor defined on the $\infty$-category of cyclotomic spectra with values in the $\infty$-category of spectra with Frobenius lifts. The adjunction above was previously established by Krause--Nikolaus~\cite{KN19} for $p$-typical $\TR$. 

\begin{remark}
Theorem~\ref{theorem:introduction_corepresentability_TR} can be regarded as an analogue of the result of Nikolaus--Scholze~\cite{NS18} which asserts that there is an equivalence of spectra
\[
	\TC(X) \simeq \map_{\CycSp}(\S, X)
\]
for every cyclotomic spectrum $X$ whose underlying spectrum is bounded below. This corepresentability result for topological cyclic homology was conjectured by Kaledin~\cite{Kal10} and established by Blumberg--Mandell~\cite{BM16} after $p$-completion prior to the work of Nikolaus--Scholze. 
\end{remark}

\begin{remark}
In this paper, the $\infty$-category of topological Cartier modules will mostly  function as a convenient formalism for proving Theorem~\ref{theorem:introduction_corepresentability_TR}. However, we believe that the notion of a topological Cartier module is important in its own right. For instance, the notion of a topological Cartier module formalizes additional structure present on the rational Witt vectors and cyclic $\K$-theory. An extensive treatment of the $\infty$-category of topological Cartier modules extending the result of Antieau--Nikolaus~\cite{AN20} to the integral situation will appear in forthcoming work. 
\end{remark} 

We prove Theorem~\ref{theorem:introduction_corepresentability_TR} by showing that the functors given by the constructions $X \mapsto \TR(X)$ and $X \mapsto \map_{\CycSp}(\widetilde{\THH}(\S[t]), X)$ both determine right adjoints of the canonical functor
\[
	\Sp^{\Fr} \to \CycSp
\]
when restricted to the full subcategories of those objects whose underlying spectrum is bounded below. This relies on a genuine version of the Tate orbit lemma~\cite[Lemma I.2.1]{NS18} established by Antieau--Nikolaus~\cite{AN20} together with an explicit description of the free topological Cartier module on a spectrum with Frobenius lifts. The latter we deduce from a general version of the Segal--tom Dieck splitting for spectral Mackey functors on orbital $\infty$-categories (see Proposition~\ref{proposition:tomDieck_orbital}). We deduce Theorem~\ref{theorem:introduction_main_result} from the celebrated Dundas--Goodwillie--McCarthy theorem~\cite{BGM12} and the following result (see Theorem~\ref{theorem:TR_curves_TC}). 

\begin{theorem*}{C} \label{theorem:introduction_TR_TC_loops}
There is a natural equivalence of spectra
\[
	\TR(X) \simeq \varprojlim \Omega \TC(X \otimes \widetilde{\THH}(\S[t]/t^n))
\]
for every cyclotomic spectrum $X$ whose underlying spectrum is bounded below. 
\end{theorem*}

The crucial observation is that there is an equivalence of spectra with $\T$-action
\[
	\map_{\Sp}(\widetilde{\THH}(\S[t]), \S) \simeq \prod_{n \geq 1} \Sigma^{\infty-1}_+(S^1/C_n),
\]
where the mapping spectrum on the left carries the residual $\T$-action. We deduce Theorem~\ref{theorem:introduction_TR_TC_loops} from Theorem~\ref{theorem:introduction_corepresentability_TR} by exploiting the equalizer formula for the mapping spectrum in the $\infty$-category of cyclotomic spectra obtained by Nikolaus--Scholze~\cite{NS18} together with a careful analysis of the cyclotomic structure of $\varprojlim \widetilde{\THH}(\S[t]/t^n)$. This observation is due to Achim Krause. 

\subsubsection*{Acknowledgements}
It is a pleasure to thank Stefano Ariotta, Elden Elmanto, Felix Janssen, Markus Land, Malte Leip, and Jay Shah for very useful discussions related to this work. The author would also like to thank Benjamin Antieau, Elden Elmanto, Lars Hesselholt, Liam Keenan, Markus Land, and Thomas Nikolaus for very valuable comments on a previous version. The author is extremely grateful to two anonymous referees who wrote extremely detailed reports pointing out many inaccuracies and mistakes, which led to many improvements. Most importantly, the author is extremely grateful to Achim Krause and Thomas Nikolaus for their constant support and many insights which made this project possible. Funded by the Deutsche Forschungsgemeinschaft (DFG, German Research Foundation) under Germany's Excellence Strategy EXC 2044–390685587, Mathematics Münster: Dynamics–Geometry–Structure and the CRC 1442 Geometry: Deformations and Rigidity. 

\section{Spectra with Frobenius lifts and TR} \label{section:cycspfr_TR}
The main goal of this section is to construct the reduced topological Hochschild homology of the $\E_\infty$-ring $\S[t]$ as a spectrum with Frobenius lifts, and provide an alternative construction of $\TR$ bypassing the otherwise instrumental use of equivariant stable homotopy theory. In \textsection\ref{subsection:epicyclic_category}, we define the notion of an object with Frobenius lifts in any $\infty$-category, and prove that the geometric realization of a presheaf on the epicyclic category with values in an $\infty$-category which admits geometric realizations refines to an object with Frobenius lifts. In \textsection\ref{subsection:spaces_fr}, we will discuss the $\infty$-category of spaces with Frobenius lifts, and construct a refinement of the cyclic bar construction which automatically carries Frobenius lifts. We use this to produce examples of spaces with Frobenius lifts which will play an essential role throughout this exposition. In \textsection\ref{subsection:cycsp_fr}, we discuss the $\infty$-category of spectra with Frobenius lifts, and construct the canonical functor from the $\infty$-category of spectra with Frobenius lifts to the $\infty$-category of cyclotomic spectra. Finally, in \textsection\ref{subsection:TR} we provide an alternative construction of $\TR$ and establish descent properties of this construction, deferring the comparison with the classical construction of $\TR$ to \textsection\ref{section:genuine}. 

\subsection{The epicyclic category} \label{subsection:epicyclic_category}
We introduce the $\infty$-category of objects with Frobenius lifts in an $\infty$-category, and describe these in terms of presheaves on the epicyclic category. The following construction will play an important role throughout this paper. 

\begin{construction} \label{construction:Wittmonoid}
We define a monoid which we will refer to as the Witt monoid following~\cite{AMGR17b}. 

\begin{enumerate}[leftmargin=2em, topsep=7pt, itemsep=7pt]
	\item The multiplicative monoid $\N^{\times}$ of positive integers acts on the circle group $\T$ by the assignment $x \mapsto x^k$ for every integer $k \geq 1$, and the Witt monoid is the topological monoid defined by $\W = \T \rtimes \N^{\times}$, where $\N^{\times}$ is regarded as a discrete topological monoid. The underlying space of the Witt monoid is given by $\T \times \N^{\times}$, and the multiplication is given by
	\[
		(\lambda, k) \cdot (\mu, l) = (\lambda \mu^k, kl)
	\]
	for every pair of elements $(\lambda, k)$ and $(\mu, l)$ of the underlying space of $\W$. It will be convenient to regard the Witt monoid as an object of the $\infty$-category $\Alg(\Spaces)$ of $\E_1$-monoids. We will let $\B\W$ denote the $\infty$-category with one object and the Witt monoid $\W$ as endomorphisms. 

	\item For every prime $p$, the multiplicative monoid $\mu_{p^\N} = \{1, p, p^2, \ldots\}$ acts on the circle group $\T$ by restriction along the canonical inclusion of multiplicative monoids $\mu_{p^\N} \hookrightarrow \N^{\times}$, and the $p$-typical Witt monoid is the topological monoid defined by $\W_{p^{\N}} = \T \rtimes \mu_{p^\N}$, where $\mu_{p^\N}$ is regarded as a discrete topological monoid. We will let $\B\W_{p^\N}$ denote the $\infty$-category with one object and the $p$-typical Witt monoid $\W_{p^\N}$ as endomorphisms.  
\end{enumerate}
\end{construction}

The Witt monoid also appears in the unpublished work of Goodwillie~\cite{Good90} on the cyclotomic trace, where the insight that it parametrizes Frobenius lifts was already present. The following definition also appears in the work of Ayala--Mazel-Gee--Rozenblyum~\cite{AMGR17a,AMGR17b,AMGR17c}. 

\begin{definition} \label{definition:frobenius_lifts}
The $\infty$-category of objects with Frobenius lifts in an $\infty$-category $\cat{C}$ is defined by
\[
	\cat{C}^{\Fr} = \Fun(\B\W^{\op}, \cat{C}) = \cat{P}_{\cat{C}}(\B\W).
\]
\end{definition}

\begin{remark} \label{remark:unwind_cfr}
There is a fiber sequence of $\infty$-categories
\[
	\B\T \simeq (\B\T)^{\op} \to (\B\W)^{\op} \to (\B\N^{\times})^{\op},
\]
and the functor $\B\W \to \B\N^{\times}$ is a cocartesian fibration. There is a functor of $\infty$-categories
\[
	\cat{C}^{\Fr} \to \cat{C}^{\B\T}
\]
which regards an object with Frobenius lifts as an object with $\T$-action. If $X$ is an object with Frobenius lifts, then there is a morphism $\psi_k : X \to X$ in $\cat{C}$ determined by the action of $(1, k)$ on the underlying object of $X$ for every integer $k \geq 1$. We have that
\[
	(\lambda^k, 1) \cdot (1, k) = (1, k) \cdot (\lambda, 1)
\]
for every element $\lambda$ of $\T$, which precisely encodes that $\psi_k$ is $\T$-equivariant with respect to the degree $k$ action on the source. We conclude that an object with Frobenius lifts in an $\infty$-category $\cat{C}$ is given by an object $X$ of $\cat{C}$ with $\T$-action together with compatible $\T$-equivariant maps
\[
	\psi_k : X \to X^{hC_k},
\]
for every positive integer $k$, where the target carries the residual $\T/C_k \simeq \T$-action. For instance, if $(k, l)$ is a pair of positive integers, then the following diagram commutes
\[
\begin{tikzcd}
	X \arrow{rr}{\psi_k} \arrow{d}{\psi_l} & & X^{hC_k} \arrow{d}{\psi_l^{hC_k}} \\ 
	X^{hC_l} \arrow{r}{\psi_k^{hC_l}} & (X^{hC_k})^{hC_l} \arrow{r}{\simeq} & (X^{hC_l})^{hC_k}
\end{tikzcd}
\]
\end{remark}

In \textsection\ref{subsection:spaces_fr} and \textsection\ref{subsection:cycsp_fr} we discuss the $\infty$-category of spaces with Frobenius lifts and the $\infty$-category of spectra with Frobenius lifts in more detail. Presently, we discuss Goodwillie's epicyclic category which provides a formalism for constructing objects with Frobenius lifts in the sense of Definition~\ref{definition:frobenius_lifts}. We begin by reviewing the cyclic category following~\cite[Appendix B]{NS18}.

\begin{definition}
A parasimplex is a nonempty linearly ordered set $I$ equipped with an action of the group of integers $\Z$ denoted $+ : I \times \Z \to I$, such that the following conditions are satisfied:
\begin{enumerate}[topsep=5pt, itemsep=3pt]
	\item For every pair of elements $\lambda$ and $\lambda'$ of $I$, the set $\{\mu \in I \mid \lambda \leq \mu \leq \lambda'\}$ is finite.
	\item If $\lambda$ is an element of $I$, then $\lambda < \lambda + 1$.
\end{enumerate}
A paracyclic morphism is a morphism of sets $f : I \to J$ which is nondecreasing and $\Z$-equivariant. The paracyclic category $\paracyclic$ is defined as the category whose objects are given by the parasimplices and whose morphisms are given by the paracyclic morphisms.
\end{definition}

\begin{example}
For every positive integer $n$, the set $\frac{1}{n}\Z = \{\frac{m}{n} \mid m \in \Z\}$ can be regarded as a parasimplex with respect to its usual ordering and the action of $\Z$ given by addition. We will denote this particular parasimplex by $[n]_{\Lambda}$. Conversely, if $I$ is a parasimplex, then there is an isomorphism of parasimplices $I \simeq [n]_{\Lambda}$ for a uniquely determined positive integer $n$ which, can be characterized as the cardinality of the set $\{x \in I \mid y \leq x < y+1\}$ for any element $y$ of $I$. 
\end{example}

\begin{example}
If $S$ is an object of the simplex category $\Delta$, then the product $S \times \Z$ can be regarded as a parasimplex with respect to the lexicographic ordering and the action of $\Z$ given by the formula $m + (s, n) = (s, m+n)$. The construction $S \mapsto S \times \Z$ defines a faithful functor $\Delta \to \paracyclic$ which is essentially surjective since there is an isomorphism of parasimplices $[n] \times \Z \simeq [n+1]_{\Lambda}$ for every integer $n \geq 0$. 
\end{example}

The cyclic category is defined as follows:

\begin{definition} \label{definition:cyclic_category}
The cyclic category $\Lambda$ is the category whose objects are parasimplices, where the set of morphisms between a pair of parasimplices $I$ and $J$ is defined by
\[
	\Hom_{\Lambda}(I, J) = \Hom_{\paracyclic}(I, J)/\Z,
\]
where $\Z$ acts on the set of paracyclic morphisms $\Hom_{\paracyclic}(I, J)$ by the formula $(f+n)(\lambda) = f(\lambda) + n$. 
\end{definition}

\begin{remark} \label{remark:strict_BZ_action_paracyclic}
The action of the group $\Z$ on the set of paracyclic morphisms $\Hom_{\paracyclic}(I, J)$ in the definition of the cyclic category determines a strict action of the simplicial abelian group $\B\Z$ on the simplicial set $\nerve(\paracyclic)$, which in turn induces an action of the circle $\T$ on $\nerve(\paracyclic)$. Note that $\nerve(\Lambda) \simeq \nerve(\paracyclic)_{\B\Z} \simeq \nerve(\paracyclic)_{h\T}$ since the action of the simplicial abelian group $\B\Z$ on $\nerve(\paracyclic)$ is free. Consequently, there is a fiber sequence
\[
	\nerve(\paracyclic) \to \nerve(\Lambda) \to \B\T,
\]
where the map $\nerve(\Lambda) \to \B\T$ is both a cartesian and cocartesian fibration. 
\end{remark}

We will now define the epicyclic category. The construction $I \mapsto I/\Z$ determines a functor
\[
	\Lambda \to\cat{C}\mathrm{at},
\]
where the parasimplex $I$ is regarded as a poset, and $I/\Z$ denotes the quotient in $\Cat$. We regard $\Cat$ as a $1$-category. Let $[n]_{\epicyclic}$ denote the category defined by $[n]_{\epicyclic} = [n]_{\Lambda}/\Z$ for every $n \geq 1$.

\begin{definition} \label{definition:epicyclic_category}
The epicyclic category $\epicyclic$ is the subcategory of $\Cat$ on those categories which are isomorphic to $[n]_{\epicyclic}$ for some integer $n \geq 1$, and those functors which are essentially surjective. 
\end{definition}

The epicyclic category was introduced by Goodwillie in an unpublished letter to Waldhausen, and Burghelea--Fiedorowicz--Gajda~\cite{BFG94} give a combinatorial description of the epicyclic category similar to the combinatorial description of the cyclic category~\cite{Con83}. Kaledin~\cite{Kal13} studies a variant of the epicyclic category which is referred to as the cyclotomic category. Ayala--Mazel-Gee--Rozenblyum~\cite{AMGR17b} describe the epicyclic category in terms of stratified $1$-manifolds and disk refinements. The definition of the epicyclic category given in Definition~\ref{definition:epicyclic_category} is due to Nikolaus. Note that the functor $\Lambda \to \epicyclic$ is essentially surjective and faithful by construction. To see that this functor is not full, we give a geometric description of the morphisms in $\Lambda$ and $\epicyclic$ in the following examples. We may regard the object $[n]_{\Lambda}$ of $\Lambda$ as a cyclic graph with $n$ vertices, or equivalently as a configuration of $n$ marked points on $S^1$.

\begin{example} \label{example:morphisms_in_cyclic}
The datum of a morphism $[m]_{\Lambda} \to [n]_{\Lambda}$ in the cyclic category $\Lambda$ is equivalent to the datum of a self-map of $S^1$ of degree $1$ which preserves markings and satisfies that the induced self-map on universal covers is non-decreasing. For example, the morphism $\tau_n : [n]_{\Lambda} \to [n]_{\Lambda}$ defined by $\tau_n(i) = i+1$ corresponds to the self-map of $S^1$ given by rotation with $2\pi/n$, where the configuration of $S^1$ is given by $\{1, \zeta, \ldots, \zeta^{n-1}\}$ for a primitive $n$th root of unity $\zeta$. Note that the morphism $\tau_n$ is an automorphism of $[n]_{\Lambda}$, and there is a group isomorphism
\[
	\Z/(n+1)\Z \to \mathrm{Aut}_{\Lambda}([n]_{\Lambda})
\]
given by sending the generator of the cyclic group $\Z/(n+1)\Z$ to the automorphism $\tau_n$. In conclusion, the group of automorphisms of $[n]_{\Lambda}$ in $\Lambda$ is cyclic of order $n+1$ generated by $\tau_n$. In contrast, the group of automorphisms of an object of the simplex category is trivial. 
\end{example}

\begin{example} \label{example:morphisms_in_epicyclic}
The datum of a morphism $[m]_{\epicyclic} \to [n]_{\epicyclic}$ in the epicyclic category $\epicyclic$ is equivalent to the datum of a self-map of $S^1$ of positive degree which preserves markings and satisfies that the induced self-map on universal covers is non-decreasing. If $\alpha$ is a morphism in $\epicyclic$, then the degree of $\alpha$ is defined as the degree of the induced self-map of $S^1$. There is a map
\[
	\mathrm{deg} : \nerve(\epicyclic) \to \B\N^{\times}
\]
given by sending a morphism of $\epicyclic$ to its degree, and this is a cartesian fibration which classifies the canonical action of the multiplicative monoid $\N^{\times}$ on the cyclic category $\Lambda$. As an example, the map $p_k : S^1 \to S^1$ given by $p_k(x) = x^k$ corresponds to a morphism $\alpha_k : [kn]_{\epicyclic} \to [n]_{\epicyclic}$ in $\epicyclic$ of degree $k$, where the source of $p_k$ carries the configuration consisting of $kn$ preimages of the $n$ marked points on the target. The morphism $\alpha_k$ does not exist in the cyclic category, which means that the functor $\Lambda \to \epicyclic$ is not full. Furthermore, for every morphism $f : [n]_{\epicyclic} \to [n]_{\epicyclic}$ in $\epicyclic$ of degree $k$, there exists a morphism $f' : [n]_{\epicyclic} \to [kn]_{\epicyclic}$ in $\epicyclic$ of degree $1$ such that $f = \alpha_k \circ f'$.
\end{example}

The epicyclic category $\epicyclic$ admits an alternative description in terms of a right action of the Witt monoid on the paracyclic category which we now address. 

\begin{construction} \label{construction:action_witt_paracyclic}
In the following, we will regard the Witt monoid as the monoid $\B\Z \rtimes \N^\times$ in $\Cat$. The action of the group $\Z$ on the set $\mathrm{Hom}_{\paracyclic}(I, J)$ given by addition as in Definition~\ref{definition:cyclic_category} defines an action of $\B\Z$ on the paracyclic category $\paracyclic$. The action of $n \in \N^{\times}$ on $\paracyclic$ is given by the assignment $I \mapsto nI$, where $nI$ denotes the parasimplex with the same underlying linearly ordered set as $I$ but whose $\Z$-action is given by restricting the $\Z$-action on $I$ along the homomorphism $\Z \to \Z$ given by multiplication by $n$. This defines a right action of $\B\Z \rtimes \N^\times$ on the paracyclic category $\paracyclic$, so we obtain a right action of the Witt monoid $\W$ on $\nerve(\paracyclic)$. 
\end{construction}

In the following, we will employ the formalism of lax limits and colimits in the sense of Gepner--Haugseng--Nikolaus~\cite{GHN15}. Recall that if $F : \B M^{\op} \to \Cat_\infty$ denotes a functor which defines a right action of an $\E_1$-monoid $M$ on an $\infty$-category $\cat{C}$, then the lax coinvariants is the cartesian fibration $p : \cat{C}_{\ell M} \to \B M$ classified by $F$. The lax fixed points $\cat{C}^{\ell M}$ is the $\infty$-category of sections of $p$ which carry every arrow in $\B M$ to a $p$-cartesian edge. 

\begin{lemma} \label{lemma:laxorbits_witt_paracyclic}
There is an equivalence of $\infty$-categories\footnote{Note that $\nerve(\paracyclic)_{\ell \W}$ is a priori a $2$-category but the result asserts that it is in fact equivalent to a $1$-category.}
\[
	\nerve(\paracyclic)_{\ell \W} \simeq \nerve(\epicyclic),
\]
where the Witt monoid acts on the paracyclic category $\paracyclic$ as in Construction~\ref{construction:action_witt_paracyclic}.
\end{lemma}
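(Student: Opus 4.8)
The plan is to decompose the right action of the Witt monoid using the semidirect product $\W = \T \rtimes \N^{\times}$ — in the strict model $\B\Z \rtimes \N^{\times}$ of Construction~\ref{construction:action_witt_paracyclic} — and to compute the lax coinvariants one factor at a time, aiming for a chain of equivalences
\[
\nerve(\paracyclic)_{\ell\W} \simeq \bigl(\nerve(\paracyclic)_{h\T}\bigr)_{\ell\N^{\times}} \simeq \nerve(\Lambda)_{\ell\N^{\times}} \simeq \nerve(\epicyclic).
\]
Here the first equivalence expresses that lax coinvariants can be computed iteratively along the extension $\B\T \to \B\W \to \B\N^{\times}$, with $\N^{\times}$ acting on $\nerve(\paracyclic)_{h\T}$ through the residual action coming from the fact that $\N^{\times}$ normalizes $\T$ inside $\W$; the second uses Remark~\ref{remark:strict_BZ_action_paracyclic}; and the third identifies the $\N^{\times}$-lax-coinvariants of the cyclic category with the epicyclic category. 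Granting these three assertions, the lemma follows by composition.

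The middle equivalence is essentially Remark~\ref{remark:strict_BZ_action_paracyclic}: the action of $\B\Z \simeq \T$ on $\nerve(\paracyclic)$ is free, so the homotopy coinvariants are computed by the strict quotient $\paracyclic/\Z$, which is $\Lambda$ by Definition~\ref{definition:cyclic_category}; and the residual action of $n \in \N^{\times}$ on $\Lambda$ is induced by the rescaling $I \mapsto nI$ on $\paracyclic$, hence is exactly the canonical $\N^{\times}$-action of Example~\ref{example:morphisms_in_epicyclic}. It is essential here that one uses the semidirect product and not a general extension, since this is what pins down the residual action. For the third equivalence I would invoke Example~\ref{example:morphisms_in_epicyclic}: the assertion there that $\deg : \nerve(\epicyclic) \to \B\N^{\times}$ is a cartesian fibration classifying the rescaling action of $\N^{\times}$ on $\Lambda$ is, by the definition of lax coinvariants recalled before the lemma, precisely the equivalence $\nerve(\epicyclic) \simeq \nerve(\Lambda)_{\ell\N^{\times}}$. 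One can also check this directly: the objects on both sides are the parasimplices, a morphism $[m]_{\Lambda} \to [n]_{\Lambda}$ over $k \in \N^{\times}$ in the lax coinvariants is a cyclic morphism $[m]_{\Lambda} \to [kn]_{\Lambda}$ into the $k$-fold rescaling of $[n]_{\Lambda}$, and this matches the degree-$k$ morphisms $[m]_{\epicyclic} \to [n]_{\epicyclic}$ via the unique factorization $\alpha_k \circ f'$ of Example~\ref{example:morphisms_in_epicyclic}, composition being respected since $\alpha_k \circ \alpha_l = \alpha_{kl}$.

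The main obstacle is the first equivalence: making rigorous the iterated computation of lax coinvariants along $\B\T \to \B\W \to \B\N^{\times}$. Since $\B\W \to \B\N^{\times}$ is a cocartesian fibration with fibre $\B\T$ by Remark~\ref{remark:unwind_cfr}, this is an instance of the transitivity of the unstraightening/Grothendieck construction — equivalently, of the compatibility of lax colimits with base change along a cocartesian fibration — for which I would cite Gepner--Haugseng--Nikolaus~\cite{GHN15}. Unstraightening the $\W$-action over $\B\W$ and integrating out the $\B\T$-direction first should present $\nerve(\paracyclic)_{\ell\W} \to \B\N^{\times}$ as the cartesian fibration classified by the functor on $(\B\N^{\times})^{\op}$ whose value at the object is $\nerve(\paracyclic)_{h\T} = \nerve(\Lambda)$; one then checks that its transition functors are the rescaling functors, which is read off from the explicit action of Construction~\ref{construction:action_witt_paracyclic} and again uses the semidirect product structure. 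Finally, the a priori $2$-categorical nature of $\nerve(\paracyclic)_{\ell\W}$ flagged in the footnote is resolved automatically once this chain is in place, since $\nerve(\Lambda)$ and $\nerve(\epicyclic)$ are $1$-categories.
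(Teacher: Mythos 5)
Your proposal follows essentially the same route as the paper: iterate lax coinvariants along $\B\T \to \B\W \to \B\N^\times$ (the paper phrases the first step as $\B\W \simeq (\B\T)_{\ell\N^\times}$ and $\nerve(\paracyclic)_{\ell\T}\simeq\nerve(\paracyclic)_{h\T}$ since $\T$ is a group), then apply Remark~\ref{remark:strict_BZ_action_paracyclic} to identify the $\T$-coinvariants with $\nerve(\Lambda)$, and finally read off $\nerve(\Lambda)_{\ell\N^\times}\simeq\nerve(\epicyclic)$ from the cartesian fibration $\deg:\nerve(\epicyclic)\to\B\N^\times$ of Example~\ref{example:morphisms_in_epicyclic}. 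Your write-up is correct, with the minor difference that you spend more effort justifying the iteration step by citing Grothendieck-construction transitivity, whereas the paper simply records the identity $\B\W\simeq(\B\T)_{\ell\N^\times}$.
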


\begin{proof}
There is an equivalence of $\infty$-categories
\[
	\nerve(\paracyclic)_{\ell \W} \simeq (\nerve(\paracyclic)_{\ell \T})_{\ell \N^{\times}} \simeq (\nerve(\paracyclic)_{h\T})_{\ell \N^\times},
\]
using that $\B\W \simeq (\B\T)_{\ell \N^\times}$, and that $\nerve(\paracyclic)_{\ell \T} \simeq \nerve(\paracyclic)_{h\T}$ since $\T$ is a group. It follows from Remark~\ref{remark:strict_BZ_action_paracyclic} that $\nerve(\paracyclic)_{h\T} \simeq \nerve(\Lambda)$, so we conclude that there is an equivalence
\[
	\nerve(\paracyclic)_{\ell \W} \simeq \nerve(\Lambda)_{\ell \N^\times}
\]
which shows that $\nerve(\paracyclic)_{\ell \W}$ is  equivalent to a $1$-category. Recall from Example~\ref{example:morphisms_in_epicyclic}, that the canonical action of $\N^\times$ on the cyclic category $\nerve(\Lambda)$ is classified by the cartesian fibration
\[
	\mathrm{deg} : \nerve(\epicyclic) \to \B\N^\times
\]
which shows that $\nerve(\Lambda)_{\ell \N^\times} \simeq \nerve(\epicyclic)$. This shows the desired statement. 
\end{proof}

Recall that the geometric realization of a cyclic object admits a canonical action of the circle group as observed by Connes~\cite{Con83}. Similarly, we introduce the formalism of epicyclic objects and show that the geometric realization of an epicyclic object canonically admits Frobenius lifts, that is a right action of the Witt monoid. 

\begin{definition}
The $\infty$-category of epicyclic objects in an $\infty$-category $\cat{C}$ is defined by
\[
	\cat{P}_{\cat{C}}(\epicyclic) = \Fun(\nerve(\epicyclic^{\op}), \cat{C}).
\]
The $\infty$-category of epicyclic objects in the $\infty$-category of spaces is denoted by $\cat{P}(\epicyclic)$. 
\end{definition}

Assume that $X$ is an epicyclic object in an $\infty$-category $\cat{C}$ which admits geometric realizations. The geometric realization of $X$ is defined as the colimit of the underlying simplicial object
\[
	\nerve(\Delta^{\op}) \to \nerve(\paracyclic^{\op}) \to \nerve(\Lambda^{\op}) \to \nerve(\epicyclic^{\op}) \to \cat{C}. 
\]
Let $|X|$ denote the geometric realization of $X$, and note that there is a functor of $\infty$-categories
\[
	\cat{P}_{\cat{C}}(\epicyclic) \to \cat{C}
\]
determined by the construction $X \mapsto |X|$. Burghelea--Fiedorowicz--Gajda~\cite{BFG94} prove that the geometric realization of an epicyclic object admits the structure of an object with Frobenius lifts using an explicit combinatorial description of the epicyclic category. We present a proof of this result using the language employed in this paper. 

\begin{proposition} \label{proposition:geometric_realization_epicyclic_frobenius_lifts}
If $\cat{C}$ is an $\infty$-category which admits geometric realizations, then the construction $X \mapsto |X|$ admits a canonical refinement to a functor of $\infty$-categories
\[
	\cat{P}_{\cat{C}}(\epicyclic) \to \cat{C}^{\Fr}
\]
such that the following diagram commutes
\[
\begin{tikzcd}
	\cat{P}_{\cat{C}}(\tilde{\Lambda}) \arrow{r} \arrow{d} & \cat{C}^{\Fr} \arrow{d} \\
	\cat{P}_{\cat{C}}(\Lambda) \arrow{r} & \cat{C}^{\B\T}
\end{tikzcd}
\]
\end{proposition}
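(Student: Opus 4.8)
The plan is to exhibit the required refinement as a left Kan extension along the opposite of the cartesian fibration produced in Lemma~\ref{lemma:laxorbits_witt_paracyclic}, and to obtain the commuting square by base change along $\B\T \to \B\W$.

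Recall from Lemma~\ref{lemma:laxorbits_witt_paracyclic} that the right action of $\W$ on $\nerve(\paracyclic)$ of Construction~\ref{construction:action_witt_paracyclic} is classified by a cartesian fibration $q \colon \nerve(\epicyclic) \to \B\W$ whose fibre over the unique object is $\nerve(\paracyclic)$; passing to opposite categories, $q^{\op} \colon \nerve(\epicyclic)^{\op} \to \B\W^{\op}$ is a cocartesian fibration with fibre $\nerve(\paracyclic)^{\op}$. I would define the functor in question to be the left Kan extension along $q^{\op}$,
\[
	(q^{\op})_! \colon \cat{P}_{\cat{C}}(\epicyclic) = \Fun(\nerve(\epicyclic)^{\op}, \cat{C}) \longrightarrow \Fun(\B\W^{\op}, \cat{C}) = \cat{C}^{\Fr}.
\]
To see that $(q^{\op})_!$ exists and refines $X \mapsto |X|$, observe that, $q^{\op}$ being a cocartesian fibration, the inclusion of the fibre $\nerve(\paracyclic)^{\op} \hookrightarrow \nerve(\epicyclic)^{\op} \times_{\B\W^{\op}} (\B\W^{\op})_{/\ast}$ is cofinal — it is a right adjoint, with left adjoint given on objects by the cocartesian pushforwards — so the pointwise formula identifies the value of $(q^{\op})_! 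X$ at the object of $\B\W^{\op}$ with $\colim_{\nerve(\paracyclic)^{\op}} X|_{\nerve(\paracyclic)^{\op}}$; and by the cofinality of $\nerve(\Delta)^{\op} \to \nerve(\paracyclic)^{\op}$ — equivalently, the fact that a paracyclic object has the same geometric realization as its underlying simplicial object — this colimit exists whenever $\cat{C}$ admits geometric realizations and is canonically $|X|$. In particular the composite $\cat{P}_{\cat{C}}(\epicyclic) \xrightarrow{(q^{\op})_!} \cat{C}^{\Fr} \to \cat{C}$ is $X \mapsto |X|$.

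For the square, the inclusion $\T = \T \times \{1\} \hookrightarrow \T \rtimes \N^{\times} = \W$ is a homomorphism of $\E_1$-monoids, inducing $\B\T \to \B\W$. Combining the fibre sequence $\B\T \to \B\W \to \B\N^{\times}$ of Remark~\ref{remark:unwind_cfr} with the identification, from Example~\ref{example:morphisms_in_epicyclic}, of $\mathrm{deg} \colon \nerve(\epicyclic) \to \B\N^{\times}$ as the cartesian fibration classifying the $\N^{\times}$-action on $\Lambda$, whose fibre is $\nerve(\Lambda)$, gives $\nerve(\epicyclic) \times_{\B\W} \B\T \simeq \nerve(\Lambda)$, the pullback of $q$ being the bifibration $\nerve(\Lambda) \to \B\T$ of Remark~\ref{remark:strict_BZ_action_paracyclic}. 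Left Kan extension along a cocartesian (hence smooth) fibration commutes with base change, so the restriction of $(q^{\op})_! X$ along $\B\T \to \B\W^{\op}$ — which is exactly the forgetful functor $\cat{C}^{\Fr} \to \cat{C}^{\B\T}$ — is the left Kan extension of $X|_{\nerve(\Lambda)^{\op}}$ along $\nerve(\Lambda)^{\op} \to \B\T$, i.e.\ the geometric realization functor $\cat{P}_{\cat{C}}(\Lambda) \to \cat{C}^{\B\T}$ for cyclic objects (cf.\ the discussion following Definition~\ref{definition:cyclic_category}). Since the left vertical functor $\cat{P}_{\cat{C}}(\epicyclic) \to \cat{P}_{\cat{C}}(\Lambda)$ is restriction along $\nerve(\Lambda)^{\op} \to \nerve(\epicyclic)^{\op}$, the square commutes, again by base change, and naturality is automatic.

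The conceptual input — presenting $\epicyclic$ as the lax coinvariants of the $\W$-action on $\paracyclic$ — is Lemma~\ref{lemma:laxorbits_witt_paracyclic}, which is already in hand, so the rest is bookkeeping with opposite categories and Kan extensions. I expect the one genuinely delicate step to be the reduction, in the pointwise left Kan extension over the one-object $\infty$-category $\B\W^{\op}$, to an honest colimit over $\nerve(\paracyclic)^{\op}$: this is precisely where having a cocartesian \emph{fibration} (rather than a mere functor) to $\B\W^{\op}$ does the work of producing the Frobenius-lift structure, and it must be coupled with the cofinality of $\nerve(\Delta)^{\op} \to \nerve(\paracyclic)^{\op}$ so that the resulting colimit is the usual geometric realization rather than an a priori larger object. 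If that cofinality statement, or the $\nerve(\Lambda)^{\op} \to \B\T$ description of the cyclic realization functor, is not already on record, those are the two auxiliary facts I would establish first.
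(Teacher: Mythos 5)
Your proof is correct, and it takes a somewhat different route from the paper's. You work directly with the cartesian fibration $q\colon\nerve(\epicyclic)\to\B\W$ implicit in Lemma~\ref{lemma:laxorbits_witt_paracyclic} and exhibit the refinement as the left Kan extension $(q^{\op})_!$ along the cocartesian fibration $q^{\op}$, using (i) cofinality of the fibre inclusion $\nerve(\paracyclic)^{\op}\hookrightarrow\nerve(\epicyclic)^{\op}\times_{\B\W^{\op}}(\B\W^{\op})_{/\ast}$ (which holds because the cocartesian pushforwards furnish a left adjoint), (ii) cofinality of $\nerve(\Delta)^{\op}\to\nerve(\paracyclic)^{\op}$, and (iii) base change for left Kan extensions along cocartesian fibrations. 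The paper instead factors the argument through the adjunction $|-|\dashv\delta^{\ast}$ on $\cat{P}_{\cat{C}}(\paracyclic)$: it observes that $\delta^{\ast}$ is $\W$-equivariant, deduces that $|-|$ refines to a functor on lax fixed points $\cat{P}_{\cat{C}}(\paracyclic)^{\ell\W}\to\cat{C}^{\Fr}$, and composes with $\cat{P}_{\cat{C}}(\epicyclic)\to\cat{P}_{\cat{C}}(\paracyclic)^{\ell\W}$; the commuting square follows from $(-)_{\ell\T}\simeq(-)_{h\T}$ and $(-)^{\ell\T}\simeq(-)^{h\T}$. The two constructions agree (the presheaf category on the lax coinvariants is the lax fixed points of the presheaf category), but the inputs differ: the paper's version leans on the lax (co)invariants and equivariant-adjunction formalism of Gepner--Haugseng--Nikolaus, whereas yours uses only the Kan-extension calculus for cocartesian fibrations and Quillen's Theorem~A, which is arguably more self-contained. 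Your identification of $\nerve(\epicyclic)\times_{\B\W}\B\T\simeq\nerve(\Lambda)$ via the degree fibration, and the reduction of the square to a Beck--Chevalley isomorphism, are both sound. Two minor slips worth fixing: the restriction is along $\B\T^{\op}\to\B\W^{\op}$ (not $\B\T\to\B\W^{\op}$), and the description of the cyclic realization as a Kan extension along $\nerve(\Lambda)^{\op}\to\B\T$ is most naturally anchored to Remark~\ref{remark:strict_BZ_action_paracyclic} rather than the discussion following Definition~\ref{definition:cyclic_category}.
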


\begin{proof}
The functor $\nerve(\Delta^{\op}) \to \nerve(\paracyclic^{\op})$ is cofinal by virtue of~\cite[Theorem B.3]{NS18}, hence the geometric realization of an epicyclic object of $\cat{C}$ is equivalent to the colimit of the paracyclic object
\[
	\nerve(\paracyclic^{\op}) \to \nerve(\Lambda^{\op}) \to \nerve(\epicyclic^{\op}) \to \cat{C}.
\]
There is an adjunction of $\infty$-categories
\[
\begin{tikzcd}
	\cat{P}_{\cat{C}}(\paracyclic) \arrow[yshift=0.7ex]{r}{|-|} & \cat{C} \arrow[yshift=-0.7ex]{l}{\delta^{\ast}}
\end{tikzcd}
\]
where the right adjoint $\delta^{\ast}$ is given by precomposition with the terminal functor $\delta : \nerve(\paracyclic^{\op}) \to \Delta^0$ which is equivariant with respect to the left $\W$-action on $\nerve(\paracyclic^{\op})$ defined in Construction~\ref{construction:action_witt_paracyclic}, and the trivial left $\W$-action on $\Delta^0$. This in turn means that the right adjoint $\delta^{\ast}$ is equivariant with respect to the induced right action of $\W$ on $\cat{P}_{\cat{C}}(\paracyclic)$ and the trivial right action of $\W$ on $\cat{C}$. Consequently, the left adjoint canonically refines to a functor of $\infty$-categories
\[
	|-| : \cat{P}_{\cat{C}}(\paracyclic)^{\ell \W} \to \cat{C}^{\Fr},
\]
where $(-)^{\ell \W}$ denotes the lax fixed points of the right action of the Witt monoid on $\cat{P}_{\cat{C}}(\paracyclic)$. Additionally, we have used that $\cat{C}^{\Fr} \simeq \cat{C}^{\ell \W}$ since the action of the Witt monoid on $\cat{C}$ is trivial. The desired functor is given by composing the functor above with the canonical functor 
\[
	\cat{P}_{\cat{C}}(\epicyclic) \to \cat{P}_{\cat{C}}(\paracyclic)^{\ell \W},
\]
where we have used that $\nerve(\paracyclic)_{\ell \W} \simeq \nerve(\epicyclic)$ by Lemma~\ref{lemma:laxorbits_witt_paracyclic}. The final assertion follows by a similar argument using the equivalences $(-)_{\ell \T} \simeq (-)_{h\T}$ and $(-)^{\ell \T} \simeq (-)^{h\T}$ since $\T$ is a group.  
\end{proof}

\subsection{Spaces with Frobenius lifts} \label{subsection:spaces_fr}
Using the formalism of epicyclic spaces discussed in \textsection\ref{subsection:epicyclic_category}, we give a more comprehensive treatment of the $\infty$-category of spaces with Frobenius lifts. We construct an epicyclic variant of the cyclic bar construction and use this to produce examples of spaces with Frobenius lifts which will play an important role throughout this paper. Recall that the $\infty$-category of spaces with Frobenius is defined by $\Spaces^{\Fr} = \cat{P}(\B\W)$. We summarize the salient features of the $\infty$-category of spaces with Frobenius lifts. 

\begin{proposition}
The $\infty$-category of spaces with Frobenius lifts is a presentable $\infty$-category, and the forgetful functor $\Spaces^{\Fr} \to \Spaces^{\B\T}$ is conservative and preserves small limits and colimits. 
\end{proposition}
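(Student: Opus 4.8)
The plan is to deduce both assertions from the fact that $\Spaces^{\Fr} = \cat{P}(\B\W) = \Fun(\B\W^{\op}, \Spaces)$ is a functor $\infty$-category into a presentable $\infty$-category, together with the standard behaviour of such categories under restriction along a functor of index categories. First I would recall that for any small $\infty$-category $\cat{D}$, the presheaf $\infty$-category $\cat{P}(\cat{D}) = \Fun(\cat{D}^{\op}, \Spaces)$ is presentable (e.g. by~\cite{HTT}), so applying this with $\cat{D} = \B\W$ gives presentability of $\Spaces^{\Fr}$ immediately. The point is then to identify the forgetful functor $\Spaces^{\Fr} \to \Spaces^{\B\T}$ as restriction along the functor $\B\T \to \B\W$ from the fiber sequence in Remark~\ref{remark:unwind_cfr} (more precisely, restriction along $(\B\T)^{\op} \to (\B\W)^{\op}$), which is exactly the functor $\cat{C}^{\Fr} \to \cat{C}^{\B\T}$ discussed there specialized to $\cat{C} = \Spaces$.

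Next I would invoke the general principle that restriction along any functor $f : \cat{D}' \to \cat{D}$ induces a functor $f^{\ast} : \Fun(\cat{D}^{\op}, \Spaces) \to \Fun((\cat{D}')^{\op}, \Spaces)$ which preserves all small limits and all small colimits, since limits and colimits in functor categories are computed pointwise and $f^{\ast}$ does not change the value category. This gives the limit- and colimit-preservation half of the statement with essentially no work. For conservativity, I would argue that a natural transformation $\eta : X \to Y$ in $\Fun(\B\W^{\op}, \Spaces)$ is an equivalence if and only if it is an equivalence on the unique object of $\B\W^{\op}$, i.e. if and only if the underlying map of spaces is an equivalence; but the underlying map of spaces of $\eta$ is precisely the image of $\eta$ under the further restriction $\Spaces^{\B\T} \to \Spaces$ along $\ast \to \B\T$. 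Since $\Spaces^{\B\T} \to \Spaces$ is conservative (a $\T$-equivariant map of spaces is an equivalence iff the underlying map is) and the composite $\Spaces^{\Fr} \to \Spaces^{\B\T} \to \Spaces$ detects equivalences, the functor $\Spaces^{\Fr} \to \Spaces^{\B\T}$ is conservative as well. (Alternatively, one can observe directly that an object of $\B\W^{\op}$ and an object of $(\B\T)^{\op}$ coincide, so $\Spaces^{\Fr} \to \Spaces^{\B\T}$ is conservative because evaluation at the unique object is conservative on both sides and commutes with the restriction.)

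There is essentially no serious obstacle here; the only mild subtlety is making sure that the forgetful functor under consideration really is restriction along $\B\T \to \B\W$ rather than some more exotic functor, and that this map of $\infty$-categories is the one induced by the semidirect-product description $\W = \T \rtimes \N^{\times}$ — but this is precisely the content of Remark~\ref{remark:unwind_cfr}, which exhibits $\B\W \to \B\N^{\times}$ as a cocartesian fibration with fiber $\B\T$ and identifies $\cat{C}^{\Fr} \to \cat{C}^{\B\T}$ as the expected restriction. So I expect the proof to be a short paragraph: cite presentability of presheaf categories, identify the forgetful functor as a restriction functor, note that restriction functors preserve limits and colimits computed pointwise, and conclude conservativity from conservativity of evaluation at the unique object (equivalently, from the further conservative forgetful functor to $\Spaces$).
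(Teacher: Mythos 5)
The paper states this proposition without proof, so there is no proof of record to compare against; your job was to supply the expected standard argument, and you did so correctly. Presentability of $\cat{P}(\B\W)$ is indeed immediate since $\B\W$ is small, the forgetful functor is restriction along $(\B\T)^{\op} \to (\B\W)^{\op}$ as Remark~\ref{remark:unwind_cfr} indicates, restriction functors between functor categories preserve pointwise-computed limits and colimits, and conservativity follows since the composite $\Spaces^{\Fr} \to \Spaces^{\B\T} \to \Spaces$ is evaluation at the unique object and hence conservative. One small redundancy: you invoke conservativity of $\Spaces^{\B\T} \to \Spaces$ alongside conservativity of the composite, but only the latter is needed — if $g \circ f$ is conservative then $f$ is automatically conservative, with no hypothesis on $g$. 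This does not affect correctness.
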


Alternatively, the $\infty$-category of spaces with Frobenius lifts is equivalent to the full subcategory of $\Spaces_{/\B\W}$ spanned by those maps over $\B\W$ which are right fibrations~\cite[Section 2.2]{Lur09}. In practice, we will make use of the formalism of epicyclic spaces as discussed in \textsection\ref{subsection:epicyclic_category} to construct examples of spaces with Frobenius lifts.

\begin{example} \label{example:free_loop_space}
Let $X$ denote an object of the $\infty$-category of spaces. The free loop space $\mathrm{L}(X)$ of $X$ is the geometric realization of the epicyclic space given by the assignment
\[
	[n]_{\epicyclic} \mapsto \Map_{\Spaces}(|[n]_{\epicyclic}|, X) \simeq \mathrm{L}(X),
\]
so we may regard the free loop space $\mathrm{L}(X)$ as a space with Frobenius lifts by Proposition~\ref{proposition:geometric_realization_epicyclic_frobenius_lifts}. For concreteness, we note that the self-map of $\mathrm{L}(X)$ determined by the cyclic operator $\tau_n$ is induced by the self-map of $S^1$ given by rotation with $2\pi/n$. The self-map of $\mathrm{L}(X)$ determined by the epicyclic operator $\alpha_k$ is induced by the self-map of $S^1$ given by the $k$th power map. 
\end{example} 

An important class of examples of spaces with Frobenius lifts arises by evaluating the cyclic bar construction on an $\E_1$-monoid. Using the combinatorial description of the epicyclic category, Burghelea--Fiederowicz--Gajda~\cite{BFG94} prove that the cyclic space whose geometric realization defines the cyclic bar construction further refines to an epicyclic space. A similar result was obtained by Schlichtkrull~\cite{Sch09}. More recently, Nikolaus--Scholze~\cite{NS18} construct the individual Frobenius lifts on the cyclic bar construction using the space-valued diagonal. For instance, if $M$ is the $\E_1$-monoid given by $\Omega X$ for a connected pointed space $X$, then $\B^{\cyc} M \simeq \mathrm{L}(X)$. In fact, this equivalence refines to an equivalence of spaces with Frobenius lifts, where we regard $\mathrm{L}(X)$ as a space with Frobenius lifts as in Example~\ref{example:free_loop_space} (see Corollary~\ref{corollary:epi_THH_loop_space}). We proceed by introducing an epicyclic bar construction following an idea of Nikolaus. 

\begin{construction} \label{construction:epicyclic_bar_construction}
Let $\mathrm{T}_{\mathrm{Ass}}$ denote the Lawvere theory of monoids~\cite{Law63}. By definition, the opposite of $\mathrm{T}_{\mathrm{Ass}}$ is the full subcategory of the category of monoids spanned by those monoids which are free on a finite set. We construct a functor $j : \epicyclic \to \mathrm{T}_{\mathrm{Ass}}^{\op}$ which informally is given by sending an object $[n]_{\epicyclic}$ of the epicyclic category to the free monoid on the set $\{1, \ldots, n\}$. First note that there is a functor of $\infty$-categories
\[
	\nerve(\epicyclic) \to \Cat_{\infty}^{\Delta^1} \xrightarrow{\mathrm{cofib}} (\Cat_{\infty})_{\ast}
\]
determined by the construction which sends an object $[n]_{\epicyclic}$ of the epicyclic category $\epicyclic$ to the cofiber of the canonical functor $[n]_{\epicyclic}^{\mathrm{ds}} \hookrightarrow [n]_{\epicyclic}$ which exhibits the source as the discrete category on the set of objects of $[n]_{\epicyclic}$. The space of endomorphisms of the canonical basepoint of the cofiber $[n]_{\epicyclic}/[n]_{\epicyclic}^{\mathrm{ds}}$ is equivalent to a discrete monoid which is free on the finite set $\{1, \ldots, n\}$, so we obtain a functor
\[
	j : \nerve(\epicyclic) \to \nerve(\mathrm{T}_{\mathrm{Ass}}^{\op})
\]
defined by $j([n]_{\epicyclic}) = \mathrm{End}_{[n]_{\epicyclic}/[n]_{\epicyclic}^{\mathrm{ds}}}(\ast)$, where $\ast$ denotes the canonical basepoint of $[n]_{\epicyclic}/[n]_{\epicyclic}^{\mathrm{ds}}$. 
\end{construction}

The datum of an $\E_1$-monoid in the sense of~\cite{Lur17} is equivalently specified by the datum of a functor $\nerve(\mathrm{T}_{\mathrm{Ass}}) \to \Spaces$ which preserves finite products as explained in~\cite{Cra10} or~\cite[Appendix B]{GGN16}. We define the epicyclic bar construction of an $\E_1$-monoid using Construction~\ref{construction:epicyclic_bar_construction}. 

\begin{definition} \label{definition:epicyclic_bar_construction}
The epicyclic bar construction $\B^{\epi} M$ of an $\E_1$-monoid $M$ is defined by forming the geometric realization of the epicyclic space given by
\[
	\nerve(\epicyclic^{\op}) \xrightarrow{j^{\op}} \nerve(\mathrm{T}_{\mathrm{Ass}}) \xrightarrow{M} \Spaces.
\]
The construction $M \mapsto \B^{\epi} M$ determines a functor of $\infty$-categories
\[
	\B^{\epi} : \Alg(\Spaces) \to \Spaces^{\Fr}.
\]
\end{definition} 

\begin{remark}
If $M$ is an $\E_1$-monoid, then the geometric realization of the cyclic space
\[
	\nerve(\Lambda^{\op}) \to \nerve(\epicyclic^{\op}) \xrightarrow{j^{\op}} \nerve(\mathrm{T}_{\mathrm{Ass}}) \xrightarrow{M} \Spaces
\]
is canonically equivalent to the cyclic bar construction $\B^{\cyc} M$ regarded as an object of $\Spaces^{\B\T}$.
\end{remark}

We introduce an epicyclic variant of topological Hochschild homology of a small $\infty$-category as a refinement of the unstable topological Hochschild homology introduced by Nikolaus~\cite{HS19}.

\begin{definition} \label{definition:epicyclic_THH}
The epicyclic topological Hochschild homology $\THH^{\epi}(\cat{C})$ of a small $\infty$-category $\cat{C}$ is defined as the geometric realization of the epicyclic space given by the assignment
\[
	[n]_{\epicyclic} \mapsto \Fun([n]_{\epicyclic}, \cat{C})^{\simeq}. 
\]
The construction $\cat{C} \mapsto \THH^{\epi}(\cat{C})$ determines a functor of $\infty$-categories
\[
	\THH^{\epi} : \Cat_{\infty} \to \Spaces^{\Fr}.
\]
\end{definition}

It will be convenient to work with both the epicyclic bar construction and the epicyclic topological Hochschild homology, so we show that they coincide as spaces with Frobenius lifts for $\E_1$-monoids. The author learned the proof of the following result from Nikolaus, and a proof of a similar result appears in Krause--Nikolaus~\cite[Proposition 8.1]{KN19}.

\begin{proposition} \label{proposition:epi_bar_conincide_epi_THH}
Let $M$ be an $\E_1$-monoid, and let $\B M$ denote the $\infty$-category with one object and $M$ as endomorphisms. There is an equivalence of spaces with Frobenius lifts
\[
	\B^{\epi} M \to \THH^{\epi}(\B M).
\]
\end{proposition}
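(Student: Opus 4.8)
The plan is to realize both sides as geometric realizations of epicyclic spaces and to compare them by a natural transformation of epicyclic spaces which becomes an equivalence after geometric realization, although it is not a levelwise equivalence. Since the refinement functor $\cat{P}(\epicyclic) \to \Spaces^{\Fr}$ of Proposition~\ref{proposition:geometric_realization_epicyclic_frobenius_lifts} carries equivalences of epicyclic spaces to equivalences in $\Spaces^{\Fr}$, this suffices.

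First I would construct the comparison map. The category $[n]_{\epicyclic}$ is the free category on the cyclic quiver with $n$ vertices and admits a natural functor $[n]_{\epicyclic} \to [n]_{\epicyclic}/[n]_{\epicyclic}^{\mathrm{ds}}$ collapsing the objects; by Construction~\ref{construction:epicyclic_bar_construction} the target is the one-object category $\B j([n]_{\epicyclic})$. Since $j([n]_{\epicyclic})$ is a free $\E_1$-monoid there is an identification $M(j([n]_{\epicyclic})) \simeq \Map_{\Alg(\Spaces)}(j([n]_{\epicyclic}), M)$, so applying $\B \colon \Alg(\Spaces) \to \Cat_{\infty}$ on mapping spaces and restricting along the collapse functor gives a natural map
\[
	M(j([n]_{\epicyclic})) \longrightarrow \Map_{\Cat_{\infty}}(\B j([n]_{\epicyclic}), \B M) \longrightarrow \Map_{\Cat_{\infty}}([n]_{\epicyclic}, \B M) = \Fun([n]_{\epicyclic}, \B M)^{\simeq}.
\]
Explicitly this sends an $n$-tuple of elements of $M$ to the associated representation of $[n]_{\epicyclic}$ in $\B M$, and it is natural in $[n]_{\epicyclic} \in \epicyclic$ since $j$ and the collapse functor are. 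Passing to geometric realizations yields a map $\B^{\epi} M \to \THH^{\epi}(\B M)$ of spaces with Frobenius lifts whose restriction along $\nerve(\Lambda^{\op}) \to \nerve(\epicyclic^{\op})$ is the evident comparison $\B^{\cyc} M \to \THH(\B M)$ of underlying $\T$-spaces.

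The remaining task — and the heart of the matter — is to show this map is an equivalence after geometric realization. By cofinality of $\nerve(\Delta^{\op}) \to \nerve(\paracyclic^{\op})$, as used in Proposition~\ref{proposition:geometric_realization_epicyclic_frobenius_lifts}, it is enough to treat underlying spaces, after which the $\T$-action and the Frobenius operators are matched by naturality over $\paracyclic$ and $\epicyclic$. Under the identification $\Fun([n]_{\epicyclic}, \B M)^{\simeq} \simeq M^{\times n} \dslash (M^{\times})^{\times n}$ the simplicial space underlying $\THH^{\epi}(\B M)$ is the cyclic nerve of $\B M$ — the one computing the unstable topological Hochschild homology of $\B M$ in the sense of \cite{HS19} — so the assertion is that the inclusion of the cyclic bar construction $\B^{\cyc} M$ into this cyclic nerve is an equivalence upon realization, i.e.\ that the extra homotopy quotients by the automorphism groups $(M^{\times})^{\times n}$ are absorbed. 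This is the identification $\THH(\B M) \simeq \B^{\cyc} M$ of unstable topological Hochschild homology with the cyclic bar construction, established in \cite{HS19} (and of which a closely related statement is Krause--Nikolaus~\cite[Proposition 8.1]{KN19}); the main obstacle is promoting this equivalence of $\T$-spaces to the epicyclic level, which I would handle by keeping the entire comparison natural over $\epicyclic$ throughout, so that the epicyclic operators $\alpha_k$ on $\B^{\epi} M$ and on $\THH^{\epi}(\B M)$ are induced from the same functors and the realized map is automatically an equivalence of epicyclic spaces. (The more elementary strategy of reducing to free $\E_1$-monoids does not work directly, as the formation of units fails to commute with the bar resolution of $M$.)
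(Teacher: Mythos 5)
Your construction of the comparison map is essentially the same as the paper's: you build it level-wise over $\epicyclic$ from the collapse functor $[n]_{\epicyclic} \to \B j([n]_{\epicyclic})$ and the universal property of the free monoid, which is just a rephrasing of the paper's inclusion $\Fun_{\ast}(C_n, \B M)^{\simeq} \hookrightarrow \Fun([n]_{\epicyclic}, \B M)^{\simeq}$. The difference lies in how the two arguments establish that the realized map is an equivalence. The paper gives a self-contained argument that stays entirely at the epicyclic level: it observes that the fiber sequence of epicyclic spaces
\[
	\Fun_{\ast}(C_n, \B M)^{\simeq} \to \Fun([n]_{\epicyclic}, \B M)^{\simeq} \to (\B M^{\times})^n
\]
exhibits the middle term as the homotopy coinvariants of an epicyclic group $G_\bullet \colon [n]_{\epicyclic} \mapsto (M^{\times})^n$ acting on the left term, and then notes that $|G_\bullet|$ is contractible because the underlying simplicial object admits an extra degeneracy; this gives the equivalence in one stroke, with the Frobenius structure carried along automatically. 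You instead cite the $\T$-equivariant identification $\THH(\B M) \simeq \B^{\cyc} M$ from the literature and leverage the conservativity of the forgetful functor $\Spaces^{\Fr} \to \Spaces$ to conclude. Both routes are valid; yours is shorter but outsources the core computation to \cite{HS19}/\cite{KN19}, whereas the paper's extra-degeneracy argument is the internal mechanism that the cited results themselves rely on. Your parenthetical about free $\E_1$-monoids not commuting with units is a correct sanity check of why a naive reduction fails, and it is consistent with why the paper goes through the epicyclic group $G_\bullet$ instead. One small imprecision: the realized map is an equivalence in $\Spaces^{\Fr}$, not ``an equivalence of epicyclic spaces'' — the map of epicyclic spaces is, as you yourself note at the outset, not a levelwise equivalence.
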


\begin{proof}
Let $C_n$ denote the cofiber of the functor $([n]_{\epicyclic}^{\mathrm{ds}})_+ \hookrightarrow ([n]_{\epicyclic})_+$ in $(\Cat_\infty)_{\ast}$, and note that $C_n$ is equivalent to the pointed $\infty$-category $\B j([n]_{\epicyclic})$, where $j([n]_{\epicyclic})$ denotes the monoid discussed in Construction~\ref{construction:epicyclic_bar_construction}. There is a fiber sequence of epicyclic spaces
\[
	\Fun_{\ast}(C_n, \B M)^\simeq \to \Fun([n]_{\epicyclic}, \B M)^{\simeq} \to (\B M^{\times})^n,
\]
where $M^{\times}$ denotes the group of units of $M$, and the epicyclic space on the left is equivalent to the epicyclic space defining $\B^{\epi} M$ since $\Fun_{\ast}(C_n, \B M)^{\simeq} \simeq \Map_{\Spaces}([n]_{\epicyclic}, M)$. As a consequence, we obtain a map of spaces with Frobenius lifts
\[
	\B^{\epi} M \to \THH^{\epi}(\B M)
\]
obtained as the geometric realization of the first map in the fiber sequence above, and we show that this map is an equivalence. Note that the space $\Fun_{\ast}(C_n, \B M)^\simeq$ admits a canonical action of the epicyclic group given by $G_\bullet : [n]_{\epicyclic} \mapsto (M^\times)^n$, and the first map in the fiber sequence exhibits $\Fun([n]_{\epicyclic}, \B M)^\simeq$ as the homotopy coinvariants of this action in the $\infty$-category $\cat{P}(\epicyclic)$. In other words, the map $\B^{\epi} M \to \THH^{\epi}(\B M)$ above is equivalent to the following composite
\[
	\B^{\epi} M \to (\B^{\epi} M)_{h|G_\bullet|} \xrightarrow{\simeq} \THH^{\epi}(\B M),
\]
where $|G_\bullet|$ denotes the geometric realization of the epicyclic group $G_\bullet$. Note that the geometric realization $|G_\bullet|$ of $G_\bullet$ is contractible since the underlying simplicial object of $G_\bullet$ admits an extra degeneracy, so the map $\B^{\epi} M \to (\B^{\epi} M)_{h|G_\bullet|}$ is an equivalence, which proves the claim. 
\end{proof}

Consequently, we obtain the following result which allow us to explicitly identify the Frobenius lifts on the epicyclic bar construction in certain examples. This result was first obtained by Goodwillie~\cite{God85}, Burghelea--Fiedorowicz~\cite{BF96}, and Jones~\cite{Jon87} ignoring the epicyclic structure, and by Burghelea--Fiedorowicz--Gajda~\cite{BFG94} in the epicyclic case.

\begin{corollary} \label{corollary:epi_THH_loop_space}
Let $X$ denote an object of the $\infty$-category of spaces regarded as an $\infty$-category. There is an equivalence of spaces with Frobenius lifts
\[
	\THH^{\epi}(X) \simeq \mathrm{L}(X),
\]
where $\mathrm{L}(X)$ is regarded as a space with Frobenius lifts as in Example~\ref{example:free_loop_space}. 
\end{corollary}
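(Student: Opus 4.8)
The plan is to show that the epicyclic space whose geometric realization defines $\THH^{\epi}(X)$ agrees, as an object of $\cat{P}(\epicyclic)$, with the epicyclic space whose geometric realization defines $\mathrm{L}(X)$ in Example~\ref{example:free_loop_space}, and then to transport this equivalence along the functor $\cat{P}_{\Spaces}(\epicyclic) \to \Spaces^{\Fr}$ of Proposition~\ref{proposition:geometric_realization_epicyclic_frobenius_lifts}. To do this I would first unwind Definition~\ref{definition:epicyclic_THH} in the case where the small $\infty$-category is a space $X$: since $X$ is an $\infty$-groupoid, the functor $\infty$-category $\Fun([n]_{\epicyclic}, X)$ is again an $\infty$-groupoid, so $\Fun([n]_{\epicyclic}, X)^{\simeq} \simeq \Fun([n]_{\epicyclic}, X) \simeq \Map_{\Cat_\infty}([n]_{\epicyclic}, X)$. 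The inclusion $\Spaces \hookrightarrow \Cat_\infty$ admits a left adjoint which sends a category to its classifying space and whose unit at $[n]_{\epicyclic}$ is the canonical map $[n]_{\epicyclic} \to |[n]_{\epicyclic}|$; this yields a natural equivalence
\[
	\Map_{\Spaces}(|[n]_{\epicyclic}|, X) \xrightarrow{\ \simeq\ } \Map_{\Cat_\infty}([n]_{\epicyclic}, X).
\]
Because this equivalence is obtained by composing the functor $\epicyclic \to \Cat_\infty$ of Definition~\ref{definition:epicyclic_category} with this adjunction and with $\Map_{\Spaces}(-, X)$, it is natural in $[n]_{\epicyclic} \in \epicyclic^{\op}$, hence upgrades to an equivalence of epicyclic spaces.

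Applying Proposition~\ref{proposition:geometric_realization_epicyclic_frobenius_lifts} to this equivalence of epicyclic spaces then produces the desired equivalence $\THH^{\epi}(X) \simeq \mathrm{L}(X)$ in $\Spaces^{\Fr}$, where $\mathrm{L}(X)$ carries the Frobenius lifts of Example~\ref{example:free_loop_space}. Since $|[n]_{\epicyclic}| \simeq S^1$ for every $n \geq 1$, the underlying space of this object is indeed the free loop space $\Map_{\Spaces}(S^1, X)$, with the cyclic operator $\tau_n$ acting through rotation by $2\pi/n$ and the epicyclic operator $\alpha_k$ through the $k$th power map on $S^1$, matching the description in Example~\ref{example:free_loop_space}.

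The only non-formal content is the identification $\Fun([n]_{\epicyclic}, X)^{\simeq} \simeq \Map_{\Spaces}(|[n]_{\epicyclic}|, X)$ of underlying spaces, and the point to be careful about is the naturality of this equivalence in the epicyclic variable, i.e. its simultaneous compatibility with all the simplicial, cyclic, and epicyclic operators at once; this is nonetheless formal once it is packaged as the composite of functors above, so I expect no essential obstacle. As an alternative, when $X$ is connected and pointed one can combine Proposition~\ref{proposition:epi_bar_conincide_epi_THH} with the equivalence $\B^{\cyc}(\Omega X) \simeq \mathrm{L}(X)$ and then reduce a general space to its connected components, using that $\THH^{\epi}(-)$ and $\mathrm{L}(-)$ both preserve coproducts of spaces; but the direct argument sketched above is cleaner and avoids the connectivity reduction.
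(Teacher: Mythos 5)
Your argument is correct and follows the same route as the paper's proof: identify the epicyclic space $[n]_{\epicyclic} \mapsto \Fun([n]_{\epicyclic}, X)^{\simeq}$ with the one from Example~\ref{example:free_loop_space} and then invoke Proposition~\ref{proposition:geometric_realization_epicyclic_frobenius_lifts}. You supply more detail than the paper does — in particular, the paper's proof states the pointwise equivalence $\Fun([n]_{\epicyclic}, X)^{\simeq} \simeq \Map_{\Spaces}(|[n]_{\epicyclic}|, X)$ and then observes that the underlying simplicial object is constant, leaving implicit that the pointwise identification is natural in $[n]_{\epicyclic}$; you correctly flag and justify that naturality via the $\infty$-groupoid completion adjunction, which is the only subtlety.
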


\begin{proof}
For every integer $n \geq 1$, there is an equivalence of spaces
\[
	\Fun([n]_{\epicyclic}, X)^{\simeq} \simeq \Map_{\Spaces}(|[n]_{\epicyclic}|, X) \simeq \mathrm{L}(X),
\]
which shows that the underlying simplicial object of $\THH^{\epi}(X)$ is constant with value $\mathrm{L}(X)$. The assertion now follows from Example~\ref{example:free_loop_space}. 
\end{proof}

Finally, using the material discussed above we present further examples of spaces with Frobenius lifts which will play an important role throughout this paper.


\begin{example} \label{example:epibarconstruction_N}
There is an equivalence of spaces with Frobenius lifts $\B^{\epi} \Z \simeq \mathrm{L}(S^1)$ by virtue of Proposition~\ref{proposition:epi_bar_conincide_epi_THH} and Corollary~\ref{corollary:epi_THH_loop_space}. For every integer $i$, we will let
\[
	\mathrm{L}(S^1)_i = \{f \in \mathrm{L}(S^1) \mid \mathrm{deg}(f) = i\}
\]
denote the space of self-maps of $S^1$ of degree $i$. Note that the map
\[
	S^1/C_i \to \mathrm{L}(S^1)_i
\]
given by $[x] \mapsto (t \mapsto (tx)^i)$ is an equivalence of spaces with $\T$-action provided that we regard the left hand side as a space with $\T$-action given by the formula $\lambda \cdot [x] = [\lambda x]$. The $p$th Frobenius lift of $\mathrm{L}(S^1)$ is induced by the $\T$-equivariant map of spaces 
\[
	\mathrm{L}(S^1)_{i} \to (\mathrm{L}(S^1)_{pi})^{hC_p}
\]
given by the construction $f \mapsto f \circ (-)^p$, where $(\mathrm{L}(S^1)_{pi})^{hC_p}$ carries the residual $\T/C_p \simeq \T$-action. Using the equivalence described above, this map corresponds to the $\T$-equivariant map of spaces
\[
	S^1/C_i \to (S^1/C_{pi})^{hC_p}
\]
given by $x \mapsto \sqrt[p]{x}$, where the residual action of $\T/C_p$ on $(S^1/C_{pi})^{hC_p}$ is given by $[\lambda]\cdot[x] = [\lambda x]$. Under the isomorphism $\T \simeq \T/C_p$ given by the $p$th root, the action of $\T$ on $(S^1/C_{pi})^{hC_p}$ is given by $\lambda \cdot [x] = [\sqrt[p]{\lambda} x]$. In conclusion, there is an equivalence of spaces with Frobenius lifts
\[
	\B^{\epi}\Z \simeq \displaystyle\coprod_{i \in \Z} S^1/C_{|i|},
\]
where the $p$th Frobenius lift is determined by the $\T$-equivariant map $S^1/C_i \to (S^1/C_{pi})^{hC_p}$ given by $x \mapsto \sqrt[p]{x}$. Furthermore, there is an equivalence of spaces with $\T$-action
\[
	\B^{\epi} \Z_{\geq 0} \simeq \ast \amalg \displaystyle\coprod_{i \geq 1} S^1/C_i,
\]
where the Frobenius lifts are described as above. This identification is originally due to Hesselholt~\cite[Lemma 2.2.3]{Hes96}. We will let $\widetilde{\B}^{\epi} \Z_{\geq 0}$ denote the space with Frobenius lifts given by
\[
	\widetilde{\B}^{\epi} \Z_{\geq 0} \simeq \displaystyle\coprod_{i \geq 1} S^1/C_i,
\]
and we refer to $\widetilde{\B}^{\epi} \Z_{\geq 0}$ as the reduced epicyclic bar construction of $\Z_{\geq 0}$.  
\end{example}

\begin{example} \label{example:witt_monoid_right_multiplication}
The Witt monoid admits the structure of $\W$-$\W$-bimodule by left and right multiplication, so we may regard the Witt monoid as a space with Frobenius lifts. Furthermore, there is an equivalence of $\W$-$\W$-bimodules in spaces
\[
	\widetilde{\B}^{\epi}\Z_{\geq 0} \simeq \W
\]
by Example~\ref{example:epibarconstruction_N}. Explicitly, under this equivalence, the action of an element $\lambda$ of $\T$ is given by left multiplication by $(\lambda, 1)$ in the Witt monoid. The $p$th Frobenius lift is given by right multiplication by $(1, p)$ in the Witt monoid. As a consequence, the Yoneda embedding $\B\W \to \Spaces^{\Fr}$ is determined by the construction $\ast \mapsto \widetilde{\B}^{\epi}\Z_{\geq 0}$. 
\end{example}

\subsection{Spectra with Frobenius lifts} \label{subsection:cycsp_fr}
Next, we discuss the $\infty$-category of spectra with Frobenius lifts in further detail, and construct the canonical functor from the $\infty$-category of spectra with Frobenius lifts to the $\infty$-category of cyclotomic spectra as an instance of the stable nerve-realization adjunction. Recall that the $\infty$-category of spectra with Frobenius lifts is defined by $\Sp^{\Fr} = \cat{P}_{\Sp}(\B\W)$. We will start by summarizing the salient features of the $\infty$-category of spectra with Frobenius lifts. 

\begin{proposition}
The $\infty$-category $\Sp^{\Fr}$ of spectra with Frobenius lifts is a presentable and stable $\infty$-category, and the forgetful functor $\Sp^{\Fr} \to \Sp^{\B\T}$ is conservative and preserves small limits and colimits. 
\end{proposition}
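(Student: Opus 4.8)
The plan is to deduce the statement from standard facts about functor $\infty$-categories, once the forgetful functor has been identified with a restriction functor. Recall from Definition~\ref{definition:frobenius_lifts} that $\Sp^{\Fr} = \Fun(\B\W^{\op}, \Sp)$, and that $\B\W$ is a small $\infty$-category. Since $\Sp$ is presentable, the functor $\infty$-category $\Fun(\B\W^{\op}, \Sp)$ is presentable by~\cite[Proposition 5.5.3.6]{Lur09}. Since limits and colimits in a functor $\infty$-category with target $\Sp$ are computed objectwise and $\Sp$ is stable, the $\infty$-category $\Fun(\B\W^{\op}, \Sp)$ is pointed and its finite limits and finite colimits agree, hence it is stable; alternatively one may invoke~\cite[Proposition 1.1.3.1]{Lur17}.

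Next I would identify the forgetful functor $\Sp^{\Fr} \to \Sp^{\B\T}$ of Remark~\ref{remark:unwind_cfr} with restriction along the inclusion $\B\T^{\op} \to \B\W^{\op}$ appearing in the fiber sequence $\B\T^{\op} \to \B\W^{\op} \to (\B\N^{\times})^{\op}$ recorded there, using the identification $\B\T \simeq \B\T^{\op}$ which holds because $\T$ is an abelian group. Restriction along a functor of small $\infty$-categories into the complete and cocomplete target $\Sp$ admits a left adjoint given by left Kan extension and a right adjoint given by right Kan extension, so it preserves all small limits and all small colimits.

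Finally, conservativity is the only point requiring a small observation, and I expect no genuine obstacle here: the inclusion $\B\T^{\op} \to \B\W^{\op}$ is essentially surjective since both $\infty$-categories have a single object, and equivalences in $\Fun(\B\W^{\op}, \Sp)$ are detected objectwise; hence a morphism of spectra with Frobenius lifts is an equivalence precisely when its underlying morphism of spectra with $\T$-action is. The substance of the argument is therefore just the identification of the forgetful functor with restriction along $\B\T \to \B\W$, everything else being formal.
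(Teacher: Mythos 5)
Your proof is correct and supplies the standard argument that the paper treats as implicit (the proposition is stated without proof as a "summary of salient features"). Presentability from HTT~5.5.3.6, stability from pointwise limits and colimits, the identification of the forgetful functor with restriction along $(\B\T)^{\op}\to(\B\W)^{\op}$ as in Remark~\ref{remark:unwind_cfr}, and conservativity from essential surjectivity are exactly the intended ingredients.
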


\begin{warning} \label{warning:cycspfr_terminology}
The notion of a spectrum with Frobenius lifts in the sense of Definition~\ref{definition:frobenius_lifts} is referred to as a cyclotomic spectrum with Frobenius lifts in the literature (cf.~\cite{NS18,AN20,KN19}). More precisely, Nikolaus--Scholze~\cite{NS18} and Antieau--Nikolaus~\cite{AN20} study the notion of a $p$-typical cyclotomic spectrum with Frobenius lift which we briefly recall in Remark~\ref{remark:ptypical_cycspfr}. In~\cite{KN19}, Krause--Nikolaus study an integral version of cyclotomic spectra with Frobenius lifts. A variant of Definition~\ref{definition:frobenius_lifts} features in the work of Ayala--Mazel-Gee--Rozenblyum~\cite{AMGR17a,AMGR17b,AMGR17c} under the name of an unstable cyclotomic object. We have opted for the present terminology to avoid confusion in later parts of the present exposition, where we will have the chance to consider objects in the $\infty$-category of cyclotomic spectra with an additional action of the Witt monoid (cf. Construction~\ref{construction:canonical_functor}).
\end{warning}

\begin{remark} \label{remark:ptypical_cycspfr}
We recall the $p$-typical variant of the notion of a spectrum with Frobenius lifts as previously considered by Nikolaus--Scholze~\cite{NS18} and Antieau--Nikolaus~\cite{AN20}. A $p$-typical spectrum with Frobenius lift\footnote{Referred to as a $p$-typical cyclotomic spectrum with Frobenius lift by Nikolaus--Scholze and Antieau--Nikolaus.} is a spectrum $X$ with $\T$-action together with a $\T$-equivariant map of spectra
\[
	\psi_p : X \to X^{hC_p},
\]
where the target carries the residual $\T/C_p \simeq \T$-action. The $\infty$-category $\Sp_p^{\Fr}$ of $p$-typical spectra with Frobenius lift is defined by the following pullback of $\infty$-categories
\[
\begin{tikzcd}
	\Sp^{\Fr}_p \arrow{rr} \arrow{d} & & (\Sp^{\B\T})^{\Delta^1} \arrow{d} \\
	\Sp^{\B\T} \arrow{rr}{(\id, (-)^{hC_p})} & & \Sp^{\B\T} \times \Sp^{\B\T}
\end{tikzcd}
\]
The canonical map of Witt monoids $\W_{p^{\N}} \to \W$ induces a canonical functor of $\infty$-categories
\[
	\Sp^{\Fr} \to \cat{P}_{\Sp}(\B\W_{p^{\N}}) \xrightarrow{\simeq} \Sp_p^{\Fr},
\]
where the equivalence is induced by the universal property of $\Sp_p^{\Fr}$ as the pullback above. Informally, the functor above is given by only remembering the $p$th Frobenius lift. 
\end{remark}

In practice, it is difficult to specify the datum of a spectrum with Frobenius lifts due to the infinite hierarchy of coherences that needs to be specified. However, there is a functor
\[
	\Spaces^{\Fr} \to \Sp^{\Fr}
\]
given by postcomposition with $\Sigma^\infty_+ : \Spaces \to \Sp$, and we will mostly be interested in spectra with Frobenius lifts contained in the essential image of this functor. In this situation, the formalism of epicyclic spaces as discussed in \textsection\ref{subsection:epicyclic_category} allows us to effectively control the coherences.

\begin{example}
The suspension spectrum $\Sigma^\infty_+ \widetilde{\B}^{\epi} \Z_{\geq 0}$ admits the structure of a spectrum with Frobenius lifts. There is an equivalence of spectra with Frobenius lifts
\[
	\Sigma^\infty_+ \widetilde{\B}^{\epi} \Z_{\geq 0} \simeq \bigoplus_{i \geq 1} \Sigma^\infty_+(S^1/C_i),
\]
and the $k$th Frobenius lift is induced by the $\T$-equivariant map of spaces $S^1/C_i \to (S^1/C_{ki})^{hC_k}$ as described in Example~\ref{example:epibarconstruction_N}. The coherences are encoded by the epicyclic bar construction as in \textsection\ref{subsection:spaces_fr}. Additionally, there is an equivalence of $\S[\W]$-$\S[\W]$-bimodules in spectra $\Sigma^\infty_+ \widetilde{\B}^{\epi} \Z_{\geq 0} \simeq \S[\W]$ by Example~\ref{example:witt_monoid_right_multiplication}.
\end{example}{}

Informally, if $X$ denotes a spectrum with Frobenius lifts, then the underlying spectrum with $\T$-action admits the structure of a cyclotomic spectrum, where the $p$th cyclotomic Frobenius is given by the $\T$-equivariant composite map of spectra $X \to X^{hC_p} \to X^{tC_p}$ for every prime $p$. We show that this construction refines to a functor of $\infty$-categories
\[
	\Sp^{\Fr} \to \CycSp
\]
as an instance of the stable nerve-realization adjunction. The stable nerve-realization adjunction appears in the work of Dwyer--Kan~\cite{DK84} in the unstable context of $G$-spaces, and we refer the reader to~\cite[Appendix A]{Ari20} for a systematic treatment phrased in the language of $\infty$-categories. 

\begin{definition} \label{definition:stable_realization}
Let $\cat{C}$ be a small $\infty$-category. The stable realization of a functor $F : \cat{C} \to \cat{D}$ with values in a stable and presentable $\infty$-category $\cat{D}$ is the functor
\[
	|-|_F : \cat{P}_{\Sp}(\cat{C}) \to \cat{D}
\]
defined by the left Kan extension of $F$ along the stable Yoneda embedding $y_\mathrm{st} : \cat{C} \to \cat{P}_{\Sp}(\cat{C})$.
\end{definition}

\begin{proposition} \label{proposition:stable_realization_right_adjoint}
Let $\cat{C}$ be a small $\infty$-category, and let $F : \cat{C} \to \cat{D}$ be a functor with values in a stable and presentable $\infty$-category $\cat{D}$. The stable realization of $F$ admits a right adjoint
\[
	\nerve_F : \cat{D} \to \cat{P}_{\Sp}(\cat{C})
\]
determined by the construction $d \mapsto \map_{\cat{D}}(F(-), d)$. 
\end{proposition}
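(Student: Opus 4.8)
The plan is to deduce this from the unstable, ordinary nerve-realization adjunction together with the universal property of $\cat{P}_{\Sp}(\cat{C})$ as the stabilization of $\cat{P}_{\Spaces}(\cat{C})$, or equivalently as the free stable presentable $\infty$-category on $\cat{C}$. First I would recall that the stable Yoneda embedding $y_{\mathrm{st}} : \cat{C} \to \cat{P}_{\Sp}(\cat{C})$ is the composite of the ordinary Yoneda embedding $y : \cat{C} \to \cat{P}_{\Spaces}(\cat{C})$ with the functor $\Sigma^{\infty}_+ : \cat{P}_{\Spaces}(\cat{C}) \to \cat{P}_{\Sp}(\cat{C})$ obtained by postcomposition with $\Sigma^{\infty}_+ : \Spaces \to \Sp$ (equivalently, $\cat{P}_{\Sp}(\cat{C}) = \Fun(\cat{C}^{\op}, \Sp)$ is the stabilization of $\cat{P}_{\Spaces}(\cat{C})$ computed pointwise). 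Since $\cat{D}$ is stable and presentable, and $\cat{P}_{\Sp}(\cat{C})$ is the universal stable presentable $\infty$-category receiving a functor from $\cat{C}$, left Kan extension along $y_{\mathrm{st}}$ produces a colimit-preserving functor $|-|_F : \cat{P}_{\Sp}(\cat{C}) \to \cat{D}$, and by the adjoint functor theorem (both $\infty$-categories are presentable, $|-|_F$ preserves small colimits) it admits a right adjoint $\nerve_F$.

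It then remains to identify $\nerve_F$ with the claimed formula $d \mapsto \map_{\cat{D}}(F(-), d)$. For this I would argue pointwise: for each object $c$ of $\cat{C}$ and each object $d$ of $\cat{D}$, we have
\[
	\map_{\cat{P}_{\Sp}(\cat{C})}(y_{\mathrm{st}}(c), \nerve_F(d)) \simeq \map_{\cat{D}}(|y_{\mathrm{st}}(c)|_F, d) \simeq \map_{\cat{D}}(F(c), d),
\]
using the adjunction and the fact that left Kan extension along a fully faithful functor restricts to the original functor, so $|y_{\mathrm{st}}(c)|_F \simeq F(c)$. On the other hand, evaluation of a spectral presheaf $G$ at $c$ is corepresented by $y_{\mathrm{st}}(c)$, i.e. $G(c) \simeq \map_{\cat{P}_{\Sp}(\cat{C})}(y_{\mathrm{st}}(c), G)$, since $\cat{P}_{\Sp}(\cat{C}) = \Fun(\cat{C}^{\op}, \Sp)$ and the stable Yoneda lemma holds in the enriched sense (mapping spectra, not just mapping spaces). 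Applying this with $G = \nerve_F(d)$ gives $\nerve_F(d)(c) \simeq \map_{\cat{D}}(F(c), d)$, naturally in $c$ and $d$, which is the assertion.

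The main obstacle is bookkeeping around the enriched (spectrum-level) Yoneda lemma: one must be careful that $\map_{\cat{P}_{\Sp}(\cat{C})}(y_{\mathrm{st}}(c), -)$ really computes evaluation at $c$ as a \emph{spectrum}, and that the chain of equivalences above is natural in both variables so that it assembles into an equivalence of functors $\nerve_F(d) \simeq \map_{\cat{D}}(F(-), d)$ rather than merely a pointwise one. This is standard — it is exactly the content of the stable nerve-realization adjunction as treated in~\cite[Appendix A]{Ari20} — so I would either cite that reference directly or spell out that the stable Yoneda embedding exhibits $\cat{P}_{\Sp}(\cat{C})$ as the free stable presentable $\infty$-category on $\cat{C}$ and that evaluation-at-$c$ is corepresentable by $y_{\mathrm{st}}(c)$, from which both the existence of $\nerve_F$ and its formula follow formally.
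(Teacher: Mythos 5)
Your proof takes a genuinely different route from the paper's. The paper first reduces (somewhat tersely) to the analogous statement for the unstable left Kan extension $\mathrm{L}_y F : \cat{P}(\cat{C}) \to \cat{D}$ along the ordinary Yoneda embedding, then runs a coend-calculus argument: writing $(\mathrm{L}_y F)(X) \simeq \int^{c} \Map_{\cat{P}(\cat{C})}(y(c), X) \otimes F(c)$ and passing the mapping space through the coend yields $\Map_{\cat{D}}((\mathrm{L}_y F)(X), d) \simeq \Map_{\cat{P}(\cat{C})}(X, \Map_{\cat{D}}(F(-),d))$. You instead argue directly in the stable setting: existence of $\nerve_F$ by the adjoint functor theorem, then the formula by combining the stable Yoneda lemma ($G(c) \simeq \map(y_{\mathrm{st}}(c), G)$) with the adjunction. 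Both are legitimate, and yours is arguably cleaner, since it sidesteps the paper's unstated stable-to-unstable reduction.

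There is, however, one concrete misstatement you should correct. You justify $|y_{\mathrm{st}}(c)|_F \simeq F(c)$ by saying that ``left Kan extension along a fully faithful functor restricts to the original functor,'' but the stable Yoneda embedding $y_{\mathrm{st}} : \cat{C} \to \cat{P}_{\Sp}(\cat{C})$ is \emph{not} fully faithful: the stable Yoneda lemma gives $\map(y_{\mathrm{st}}(c), y_{\mathrm{st}}(c')) \simeq \Sigma^\infty_+ \Map_{\cat{C}}(c,c')$, so the mapping space is $\Omega^\infty \Sigma^\infty_+ \Map_{\cat{C}}(c,c')$, which differs from $\Map_{\cat{C}}(c,c')$ already for $\cat{C} = \ast$. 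Relatedly, the \emph{pointwise, unenriched} left Kan extension along $y_{\mathrm{st}}$ does not restrict to $F$ (again visible for $\cat{C} = \ast$, where the relevant comma $\infty$-category over $\S$ is $\Omega^\infty\S$ rather than a point) and need not preserve colimits. The conclusion $|y_{\mathrm{st}}(c)|_F \simeq F(c)$ is nonetheless correct, but the right justification is the one you gesture at elsewhere in your write-up: $y_{\mathrm{st}}$ exhibits $\cat{P}_{\Sp}(\cat{C})$ as the free stable presentable $\infty$-category on $\cat{C}$, so $y_{\mathrm{st}}^\ast : \Fun^{\mathrm{L}}(\cat{P}_{\Sp}(\cat{C}), \cat{D}) \to \Fun(\cat{C}, \cat{D})$ is an equivalence and $|-|_F$ is by definition its inverse applied to $F$ (equivalently, the $\Sp$-enriched left Kan extension), so $|-|_F \circ y_{\mathrm{st}} \simeq F$ by construction. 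You should also note, as you do implicitly, that the adjunction equivalence $\Map(|X|_F, d) \simeq \Map(X, \nerve_F(d))$ upgrades to an equivalence of mapping spectra because $|-|_F$ is exact. With the full-faithfulness claim replaced by the universal property, your argument is complete.
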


\begin{proof}
Let $\mathrm{L}_y F$ denote the left Kan extension of $F$ along the Yoneda embedding $y : \cat{C} \to \cat{P}(\cat{C})$, and note that it suffices to show that $\mathrm{L}_y F$ admits a right adjoint determined by the construction $d \mapsto \Map_{\cat{D}}(F(-), d)$. It follows from~\cite[Theorem 5.1.5.6]{Lur09} that there is an adjoint equivalence
\[
\begin{tikzcd}
	\Fun^{\mathrm{L}}(\cat{P}(\cat{C}), \cat{D}) \arrow[yshift=0.7ex]{r}{y^{\ast}} & \Fun(\cat{C}, \cat{D}) \arrow[yshift=-0.7ex]{l}
\end{tikzcd}
\]
where the right adjoint is determined by the assignment $F \mapsto \mathrm{L}_y F$, so we conclude that $\mathrm{L}_y F$ admits a right adjoint by~\cite[Corollary 5.5.2.9]{Lur09}. There is a natural equivalence 
\[
	(\mathrm{L}_y F)(X) \simeq \int^{c \in \cat{C}} \Map_{\cat{P}(\cat{C})}(y(c), X) \otimes F(c),
\]
for every presheaf $X$ on $\cat{C}$, where we have used that $\cat{D}$ is canonically tensored over the $\infty$-category of spaces. For every object $d$ of $\cat{D}$, there is a sequence of natural equivalences
\begin{align*}
	\Map_{\cat{D}}((\mathrm{L}_y F)(X), d) &\simeq \Map_{\cat{D}}\left(\int^{c \in \cat{C}} \Map_{\cat{P}(\cat{C})}(y(c), X) \otimes F(c), d\right) \\
	&\simeq \int_{c \in \cat{C}} \Map_{\cat{D}}(X(c) \otimes F(c), d) \\
	&\simeq \int_{c \in \cat{C}} \Map_{\Spaces}(X(c), \Map_{\cat{D}}(F(c), d)) \\
	&\simeq \Map_{\cat{P}(\cat{C})}(X, \Map_{\cat{D}}(F(-), d))
\end{align*}
which finishes the proof. 
\end{proof}

We construct a functor $\B\W \to \CycSp$ which encodes the cyclotomic structure of $\Sigma^\infty_+ \widetilde{\B}^{\epi}\Z_{\geq 0}$, which will allow us to define the canonical functor from the $\infty$-category of spectra with Frobenius lifts to the $\infty$-category of cyclotomic spectra. 

\begin{construction} \label{construction:canonical_functor}
Following Nikolaus--Scholze~\cite{NS18} and Antieau--Nikolaus~\cite{AN20}, the $\infty$-category of $p$-typical spaces with Frobenius lift is defined by the following pullback of $\infty$-categories
\[
\begin{tikzcd}
	\Spaces^{\Fr}_p \arrow{rr} \arrow{d} & & (\Spaces^{\B\T})^{\Delta^1} \arrow{d} \\
	\Spaces^{\B\T} \arrow{rr}{(\id, (-)^{hC_p})} & & \Spaces^{\B\T} \times \Spaces^{\B\T}
\end{tikzcd}
\]
Consider the following sequence of functors of $\infty$-categories
\[
	\Spaces_p^{\Fr} \to \Sp^{\Fr}_p \to \CycSp_p,
\]
where the first induced by the suspension spectrum functor $\Sigma^\infty_+ : \Spaces \to \Sp$, and the second functor is induced by the natural transformation $(-)^{hC_p} \to (-)^{tC_p}$. Using that there is an equivalence of $\infty$-categories $\cat{P}(\B\W_{p^{\N}}) \simeq \Spaces^{\Fr}_p$, we obtain a sequence of functors of $\infty$-categories
\[
	\B\W \to \Spaces^{\Fr} \to \Spaces^{\Fr}_p \to \CycSp_p
\]
for every prime number $p$, where the first functor is given by the Yoneda embedding which we described explicitly in Example~\ref{example:witt_monoid_right_multiplication}, and the second functor is induced by the canonical map of $\E_1$-monoids $\W_{p^\N} \to \W$. Consequently, we obtain a functor of $\infty$-categories
\[
	\B\W \to \CycSp
\]
by virtue of the universal property of $\CycSp$ as a lax equalizer (cf.~\cite[Definition II.1.4]{NS18}). This functor carries the unique object of $\B\W$ to $\Sigma^\infty_+ \widetilde{\B}^{\epi} \Z_{\geq 0} \simeq \S[\W]$, and we note that the $\T$-action on $\Sigma^\infty_+ \widetilde{\B}^{\epi}\Z_{\geq 0}$ is the usual $\T$-action which unstably is given by $\lambda \cdot (x, n) = (\lambda x, n)$ for every element $\lambda$ of $\T$. The stable realization of the functor $\B\W \to \CycSp$ defines a functor of $\infty$-categories 
\[
	\Sp^{\Fr} \to \CycSp
\]
which we will refer to as the canonical functor. 
\end{construction}

The canonical functor $\Sp^{\Fr} \to \CycSp$ does not exhibit the $\infty$-category of spectra with Frobenius lifts as a subcategory of the $\infty$-category of cyclotomic spectra, that is the Frobenius lifts are structure and not a property. We establish the main result of this section. 

\begin{theorem} \label{theorem:adjunction_cycspfr_cycsp}
There is an adjunction of $\infty$-categories
\[
\begin{tikzcd}
	\Sp^{\Fr} \arrow[yshift=0.7ex]{r} & \CycSp \arrow[yshift=-0.7ex]{l}
\end{tikzcd}
\]
where the right adjoint is determined by the construction $X \mapsto \map_{\CycSp}(\widetilde{\THH}(\S[t]), X)$, and the left adjoint is given by the canonical functor constructed above. 
\end{theorem}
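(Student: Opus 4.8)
The plan is to exhibit the canonical functor $\Sp^{\Fr} \to \CycSp$ as a left adjoint by invoking the stable nerve–realization adjunction from Proposition~\ref{proposition:stable_realization_right_adjoint}, applied to the functor $F : \B\W \to \CycSp$ constructed in Construction~\ref{construction:canonical_functor}. By Definition~\ref{definition:stable_realization} the canonical functor is exactly the stable realization $|-|_F : \cat{P}_{\Sp}(\B\W) = \Sp^{\Fr} \to \CycSp$, and $\CycSp$ is presentable and stable, so Proposition~\ref{proposition:stable_realization_right_adjoint} immediately produces a right adjoint $\nerve_F : \CycSp \to \Sp^{\Fr}$ given by the formula $X \mapsto \map_{\CycSp}(F(-), X)$. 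It therefore remains only to identify the value $F(\ast)$ of this functor on the unique object of $\B\W$ with $\widetilde{\THH}(\S[t])$ as a cyclotomic spectrum.

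The first step is thus to recall from Construction~\ref{construction:canonical_functor} that $F$ carries the unique object of $\B\W$ to $\Sigma^\infty_+ \widetilde{\B}^{\epi}\Z_{\geq 0}$, with its cyclotomic structure assembled from the $p$-typical Frobenius lifts $S^1/C_i \to (S^1/C_{pi})^{hC_p} \to (S^1/C_{pi})^{tC_p}$ of Example~\ref{example:epibarconstruction_N}. The second step is to compare this with $\widetilde{\THH}(\S[t])$. Here I would use the equivalence of $\E_\infty$-rings (hence of cyclotomic spectra after applying $\THH$) exhibited in the introduction: the underlying $\E_1$-ring of $\S[t] = \Sigma^\infty_+ \Z_{\geq 0}$ is the monoid ring on $\Z_{\geq 0}$, so $\THH(\S[t]) \simeq \Sigma^\infty_+ \B^{\cyc}\Z_{\geq 0}$ as cyclotomic spectra, and by Proposition~\ref{proposition:epi_bar_conincide_epi_THH} together with Example~\ref{example:epibarconstruction_N} the cyclic bar construction $\B^{\cyc}\Z_{\geq 0}$ refines to the epicyclic bar construction $\B^{\epi}\Z_{\geq 0} \simeq \ast \amalg \coprod_{i\geq 1} S^1/C_i$. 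Taking the reduced part — the fiber of the map induced by $t \mapsto 0$, which on the space level splits off the basepoint $\ast$ — gives $\widetilde{\THH}(\S[t]) \simeq \Sigma^\infty_+ \widetilde{\B}^{\epi}\Z_{\geq 0}$, and by construction of the reduced epicyclic bar construction this equivalence is compatible with the $p$-typical Frobenius lifts on both sides, hence is an equivalence of cyclotomic spectra. This identifies $F(\ast) \simeq \widetilde{\THH}(\S[t])$ in $\CycSp$, so $\nerve_F(X) \simeq \map_{\CycSp}(\widetilde{\THH}(\S[t]), X)$, as claimed.

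The main obstacle I anticipate is the coherence bookkeeping in the second step: one must check not merely that $\Sigma^\infty_+\widetilde{\B}^{\epi}\Z_{\geq 0}$ and $\widetilde{\THH}(\S[t])$ agree as cyclotomic spectra, but that this equivalence is precisely the one encoded by the functor $F$ out of $\B\W$, i.e. that it respects the full tower of coherences assembled through the lax-equalizer universal property of $\CycSp$ and through the epicyclic formalism of \textsection\ref{subsection:epicyclic_category}. This is where the epicyclic bar construction earns its keep — Definition~\ref{definition:epicyclic_bar_construction} produces $\B^{\epi}$ as a functor landing in $\Spaces^{\Fr}$ directly, so the coherences are built in rather than checked by hand — but verifying that the resulting cyclotomic structure matches the one on $\THH$ of a monoid ring (as in~\cite{NS18}) is the technical heart. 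Everything else is formal: once $F(\ast)$ is identified, the adjunction and the mapping-spectrum description of the right adjoint are delivered verbatim by Proposition~\ref{proposition:stable_realization_right_adjoint}, and the fact that the left adjoint is the canonical functor is true by the definition of stable realization.
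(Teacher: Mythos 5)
Your proposal is correct and takes essentially the same route as the paper: the paper's proof is a one-liner that cites Proposition~\ref{proposition:stable_realization_right_adjoint}, Construction~\ref{construction:canonical_functor}, and the equivalence of cyclotomic spectra $\widetilde{\THH}(\S[t]) \simeq \Sigma^\infty_+ \widetilde{\B}^{\epi}\Z_{\geq 0}$, and you simply unfold this argument, correctly pinpointing the coherent identification of the cyclotomic structure on $\Sigma^\infty_+\widetilde{\B}^{\epi}\Z_{\geq 0}$ with that of $\widetilde{\THH}(\S[t])$ as the one non-formal ingredient, which the paper also treats as known and does not reprove.
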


\begin{proof}
There is an equivalence of cyclotomic spectra $\widetilde{\THH}(\S[t]) \simeq \Sigma^\infty_+ \widetilde{\B}^{\epi} \Z_{\geq 0}$, so the claim follows by combining Proposition~\ref{proposition:stable_realization_right_adjoint} and Construction~\ref{construction:canonical_functor}. 
\end{proof}

\subsection{Topological restriction homology} \label{subsection:TR}
We present an alternative definition of $\TR$ as a functor on the $\infty$-category of cyclotomic spectra valued in the $\infty$-category of spectra with Frobenius lifts inspired by a result of Blumberg--Mandell~\cite{BM16}. Additionally, we study $\TR$ as a localizing invariant and discuss various descent properties of $\TR$.  

\begin{definition} \label{definition:TR}
If $X$ is a cyclotomic spectrum, then $\TR(X)$ is defined by
\[
	\TR(X) = \map_{\CycSp}(\widetilde{\THH}(\S[t]), X) \simeq \map_{\CycSp}\Big(\bigoplus_{i \geq 1} \Sigma^\infty_+(S^1/C_i), X\Big). 
\]
The construction $X \mapsto \TR(X)$ determines a functor of $\infty$-categories $\TR : \CycSp \to \Sp$. 
\end{definition}

\begin{notation}
If $\cat{C}$ is a stable $\infty$-category, then $\TR(\cat{C}) = \TR(\THH(\cat{C}))$. In particular, if $R$ is an $\E_1$-ring, then $\TR(R) = \THH(\mathrm{Perf}_R) \simeq \TR(\THH(R))$. 
\end{notation}

Definition~\ref{definition:TR} is based on a result of Blumberg--Mandell~\cite[Theorem 6.11]{BM16} which asserts that the classical construction of $\TR$ as considered by~\cite{BHM93,HM97,BM16} is corepresentable by the reduced topological Hochschild homology of the flat affine line $\S[t]$ as a functor on the homotopy category of genuine cyclotomic spectra with values in the homotopy category of spectra. In \textsection\ref{section:genuine}, we prove that the definition of $\TR$ given above recovers the classical definition of $\TR$ in the bounded below case (see Theorem~\ref{theorem:comparison_TR}). The following result is now an immediate consequence of the formalism developed in \textsection\ref{subsection:cycsp_fr}.

\begin{proposition} \label{proposition:TR_right_adjoint}
The functor $\TR : \CycSp \to \Sp$ refines to a functor of $\infty$-categories
\[
	\TR : \CycSp \to \Sp^{\Fr}
\]
which is a right adjoint of the canonical functor $\Sp^{\Fr} \to \CycSp$. 
\end{proposition}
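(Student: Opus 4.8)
The plan is to deduce this directly from Theorem~\ref{theorem:adjunction_cycspfr_cycsp} together with Definition~\ref{definition:TR}. The key point is that we have already constructed an adjunction between $\Sp^{\Fr}$ and $\CycSp$ whose right adjoint is the functor $X \mapsto \map_{\CycSp}(\widetilde{\THH}(\S[t]), X)$. By definition, the underlying spectrum of this right adjoint, obtained by composing with the forgetful functor $\Sp^{\Fr} \to \Sp$, is precisely $\TR(X)$. So the only thing that requires verification is that the composite of the right adjoint $\CycSp \to \Sp^{\Fr}$ with the forgetful functor $\Sp^{\Fr} \to \Sp$ agrees with the functor $\TR$ of Definition~\ref{definition:TR} on underlying spectra.

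First I would recall that the right adjoint produced by Proposition~\ref{proposition:stable_realization_right_adjoint}, applied to the functor $F : \B\W \to \CycSp$ of Construction~\ref{construction:canonical_functor}, is the functor $\nerve_F : \CycSp \to \cat{P}_{\Sp}(\B\W) = \Sp^{\Fr}$ given by $X \mapsto \map_{\CycSp}(F(-), X)$. Since $F$ sends the unique object of $\B\W$ to $\Sigma^\infty_+ \widetilde{\B}^{\epi}\Z_{\geq 0} \simeq \widetilde{\THH}(\S[t])$, the underlying spectrum (i.e.\ the value on the unique object of $\B\W$, after forgetting the $\W$-action) of $\nerve_F(X)$ is exactly $\map_{\CycSp}(\widetilde{\THH}(\S[t]), X)$, which is $\TR(X)$ by Definition~\ref{definition:TR}. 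Thus $\TR : \CycSp \to \Sp$ factors through $\Sp^{\Fr}$ via $\nerve_F$, and this factorization is the desired refinement $\TR : \CycSp \to \Sp^{\Fr}$.

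It then remains to observe that $\nerve_F$ is a right adjoint of the canonical functor $\Sp^{\Fr} \to \CycSp$. But this is exactly the content of Theorem~\ref{theorem:adjunction_cycspfr_cycsp}: the canonical functor is the stable realization $|-|_F$ of $F$, and Proposition~\ref{proposition:stable_realization_right_adjoint} identifies its right adjoint with $\nerve_F$. Hence the refined functor $\TR : \CycSp \to \Sp^{\Fr}$ is right adjoint to $\Sp^{\Fr} \to \CycSp$, completing the proof.

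I do not anticipate any serious obstacle here; the statement is essentially a bookkeeping consequence of the adjunction already established in Theorem~\ref{theorem:adjunction_cycspfr_cycsp}. The only mild subtlety worth spelling out is the compatibility of the forgetful functor $\Sp^{\Fr} \to \Sp$ (evaluation at the object of $\B\W$) with the formula for $\nerve_F$, i.e.\ that forgetting the Witt-monoid action recovers the plain mapping spectrum $\map_{\CycSp}(\widetilde{\THH}(\S[t]), X)$ rather than some twisted variant; this follows immediately from the description of $\nerve_F$ in Proposition~\ref{proposition:stable_realization_right_adjoint} as $d \mapsto \map_{\cat{D}}(F(-), d)$ and the fact that $F$ carries the unique object of $\B\W$ to $\widetilde{\THH}(\S[t])$.
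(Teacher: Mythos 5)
Your proposal is correct and follows exactly the same route as the paper, which also derives the statement by combining Theorem~\ref{theorem:adjunction_cycspfr_cycsp} with Definition~\ref{definition:TR}. The only difference is that you spell out the bookkeeping (unwinding Proposition~\ref{proposition:stable_realization_right_adjoint} and Construction~\ref{construction:canonical_functor}) that the paper leaves implicit.
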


\begin{proof}
Combine Theorem~\ref{theorem:adjunction_cycspfr_cycsp} and Definition~\ref{definition:TR}. 
\end{proof}

Krause--Nikolaus~\cite[Proposition 10.3]{KN19} prove that the construction given by $p$-typical $\TR$ determines a right adjoint of the canonical functor $\Sp^{\Fr}_p \to \CycSp_p$, and Proposition~\ref{proposition:TR_right_adjoint} extends this result to the integral situation.

\begin{remark}
Nikolaus--Scholze~\cite{NS18} prove that for every cyclotomic spectrum whose underlying spectrum is bounded below, there is a natural equivalence of spectra
\[
	\TC(X) \simeq \map_{\CycSp}(\S, X),
\]
where $\TC(X)$ denotes Goodwillie's integral topological cyclic homology. This was conjectured by Kaledin~\cite{Kal10} and proven by Blumberg--Mandell~\cite{BM16} after $p$-completion. Definition~\ref{definition:TR} provides a similar description of $\TR$ in the bounded below case removing the otherwise instrumental use of equivariant stable homotopy theory in the construction of $\TR$. 
\end{remark}

\begin{remark}
An advantage of Definition~\ref{definition:TR} is that we obtain an explicit equalizer formula for $\TR$. Let $X$ be a cyclotomic spectrum whose underlying spectrum is bounded below, and note that $X^{hC_i} \simeq \map_{\Sp^{\B\T}}(\Sigma^\infty_+(S^1/C_i), X)$ for every $i \geq 1$. Consequently, we obtain that
\[
	\TR(X) \simeq \mathrm{Eq} \Big( 
	\begin{tikzcd}
		\displaystyle\prod_{i \geq 1} X^{hC_i} \arrow[yshift=0.7ex]{r} \arrow[yshift=-0.7ex]{r} & \displaystyle\prod_{p}\displaystyle\prod_{i \geq 1} (X^{tC_p})^{hC_i}
	\end{tikzcd}
	\Big),
\]
where the top map is induced by $X \to X^{tC_p}$, and the bottom map is induced by the composite
\[
	X^{hC_i} \simeq (X^{hC_p})^{hC_{i/p}} \xrightarrow{\can^{hC_{i/p}}} (X^{tC_p})^{hC_{i/p}}
\]
provided that $p$ divides $i$. This is a consequence of the formula for the mapping spectrum in the $\infty$-category $\CycSp$ obtained in~\cite[Proposition II.1.5]{NS18}. 
\end{remark}

There is a construction of $p$-typical $\TR$ due to Nikolaus--Scholze~\cite{NS18} which only relies on the Borel equivariant homotopy theory of cyclotomic spectra, which we recall for completeness. 

\begin{remark}
For every $p$-typical cyclotomic spectrum $X$, we let $\TR^{n+1}(X, p)$ denote the following iterated pullback in the $\infty$-category of spectra with $\T$-action
\[
	X^{hC_{p^n}} \times_{(X^{tC_p})^{hC_{p^{n-1}}}} X^{hC_{p^{n-1}}} \times_{(X^{tC_p})^{hC_{p^{n-2}}}} \cdots \times_{(X^{tC_p})^{hC_{p}}} X^{hC_p} \times_{X^{tC_p}} X 
\]
for each $n \geq 0$. The maps from the left factors to the right factors are induced by the canonical map $X^{hC_p} \to X^{tC_p}$, and the maps from the right factors to the left factors are induced by cyclotomic Frobenius $X \to X^{tC_p}$. In fact, if the underlying spectrum of $X$ is bounded below, then the underlying spectrum of $X$ admits the structure of a genuine $\T$-spectrum with respect to the family of finite $p$-subgroups of $\T$, such that there is an equivalence $\TR^{n+1}(X, p) \simeq X^{C_{p^n}}$ for each $n \geq 0$.
For each $n \geq 1$, there is a map of spectra with $\T$-action $R : \TR^{n+1}(X, p) \to \TR^n(X, p)$ induced by forgetting the first factor in the iterated pullback defining $\TR^{n+1}(X, p)$, and there is an equivalence of spectra with $\T$-action 
\[
	\TR(X, p) \simeq \varprojlim_n \TR^{n+1}(X, p).
\]
Krause--Nikolaus~\cite[Definition 9.5]{KN19} give a description of integral $\TR$ by similar methods as above. However, it becomes complicated to specify coherences between the Frobenius lifts using this description. By similar methods as used in this paper, it is possible to show that there is a natural equivalence of $p$-typical cyclotomic spectra with Frobenius lift
\[
	\TR(X, p) \simeq \map_{\CycSp_p}\Big(\bigoplus_{i \geq 0} \Sigma^\infty_+(S^1/C_{p^i}), X\Big)
\]
for every $p$-typical cyclotomic spectrum $X$ whose underlying spectrum is bounded below.
\end{remark}

As an application, we describe $\TR$ as a localizing invariant. Let $\Cat_{\infty}^{\perf}$ denote the $\infty$-category of small idempotent-complete stable $\infty$-categories and exact functors among them. Recall that an exact sequence in $\Cat_\infty^{\perf}$ is a sequence which is both a fiber sequence and a cofiber sequence. Following Tamme~\cite{Tam18}, a localizing invariant with values in a stable $\infty$-category $\cat{D}$ is a functor $E : \Cat_{\infty}^{\perf} \to \cat{D}$ which carries exact sequences in $\Cat_\infty^{\perf}$ to fiber sequences in $\cat{D}$. Blumberg--Gepner--Tabuada~\cite{BGT13} additionally require that localizing invariants preserve filtered colimits.

\begin{corollary} \label{corollary:TR_localizing}
The functor $\Cat_\infty^{\perf} \xrightarrow{\THH} \CycSp \xrightarrow{\TR} \Sp^{\Fr}$ is a localizing invariant. 
\end{corollary}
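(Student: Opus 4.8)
The plan is to factor the claim through two facts: that $\THH\colon \Cat_\infty^{\perf} \to \CycSp$ is a localizing invariant, and that $\TR\colon \CycSp \to \Sp^{\Fr}$ preserves fiber sequences. The first is standard (it is essentially the statement that $\THH$ sends exact sequences of stable $\infty$-categories to fiber sequences of spectra, upgraded to the cyclotomic level), so I would cite it from~\cite{BGT13} together with the observation that the cyclotomic structure is functorial and the forgetful functor $\CycSp \to \Sp^{\B\T}$ is conservative and exact, so a sequence in $\CycSp$ is a fiber sequence precisely when its image in $\Sp$ is. The real content is therefore the second fact.

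First I would record that $\Sp^{\Fr}$ is a stable $\infty$-category and that the forgetful functor $\Sp^{\Fr} \to \Sp^{\B\T}$ (and hence $\Sp^{\Fr} \to \Sp$) is conservative and preserves all limits, by the proposition stated just before Warning~\ref{warning:cycspfr_terminology}; consequently a sequence in $\Sp^{\Fr}$ is a fiber sequence if and only if its underlying sequence of spectra is. So it suffices to check that $\TR\colon \CycSp \to \Sp$ carries fiber sequences of cyclotomic spectra to fiber sequences of spectra. But by Definition~\ref{definition:TR} we have $\TR(X) = \map_{\CycSp}(\widetilde{\THH}(\S[t]), X)$, and the mapping spectrum functor $\map_{\CycSp}(\widetilde{\THH}(\S[t]), -)$ out of a fixed object is exact, since $\CycSp$ is a stable $\infty$-category and $\map_{\CycSp}(Y,-)$ preserves finite limits (in particular fibers) for any $Y$. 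This immediately gives that $\TR$ preserves fiber sequences, and combined with the localizing property of $\THH$ yields that the composite $\Cat_\infty^{\perf} \xrightarrow{\THH} \CycSp \xrightarrow{\TR} \Sp^{\Fr}$ sends exact sequences to fiber sequences, i.e.\ is a localizing invariant in the sense of Tamme~\cite{Tam18}.

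The only subtlety — and the step I would be most careful about — is matching conventions: Tamme's definition requires only that exact sequences go to fiber sequences (no filtered-colimit condition), so I would state the corollary in that form; if one wants the stronger Blumberg--Gepner--Tabuada notion one must separately note that $\THH$ and the mapping-spectrum functor $\map_{\CycSp}(\widetilde{\THH}(\S[t]), -)$ both preserve filtered colimits. The latter is where a genuine argument is needed, since $\widetilde{\THH}(\S[t]) \simeq \bigoplus_{i \geq 1}\Sigma^\infty_+(S^1/C_i)$ is not compact in $\CycSp$; however, using the equalizer formula for $\TR$ recorded in the remark after Definition~\ref{definition:TR}, each term $X^{hC_i}$ and $(X^{tC_p})^{hC_i}$ fails to commute with filtered colimits in general, so the filtered-colimit statement is genuinely false without extra hypotheses and I would therefore \emph{not} claim it — I would state the corollary with Tamme's weaker definition, as the excerpt already signals by explicitly contrasting the two conventions. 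Thus the proof is short: assemble the exactness of $\THH$ into $\CycSp$, the exactness of $\map_{\CycSp}(\widetilde{\THH}(\S[t]),-)$, and the conservativity/exactness of the forgetful functor $\Sp^{\Fr}\to\Sp$.
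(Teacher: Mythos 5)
Your proof is correct and takes essentially the same route as the paper's. The paper's own argument is even terser: it cites Blumberg--Mandell~\cite{BM12} directly for the fact that $\THH\colon\Cat_\infty^{\perf}\to\CycSp$ is a localizing invariant (rather than your combination of~\cite{BGT13} with conservativity of the forgetful functor), and it invokes Proposition~\ref{proposition:TR_right_adjoint} to say $\TR$ preserves all limits because it is a right adjoint, rather than rederiving exactness of $\map_{\CycSp}(\widetilde{\THH}(\S[t]),-)$ from stability of $\CycSp$ as you do --- both observations are of course equivalent. Your parenthetical about filtered colimits is a worthwhile piece of context: it correctly identifies why the paper adopts Tamme's definition of localizing invariant rather than the stronger Blumberg--Gepner--Tabuada one, and the obstruction you name (that $X\mapsto X^{hC_i}$ and $X\mapsto (X^{tC_p})^{hC_i}$ need not commute with filtered colimits) is precisely the issue.
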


\begin{proof}
The functor $\THH : \Cat_{\infty}^{\perf} \to \CycSp$ is a localizing invariant by Blumberg--Mandell~\cite{BM12}, and the functor $\TR : \CycSp \to \Sp^{\Fr}$ preserves limits by virtue of Proposition~\ref{proposition:TR_right_adjoint}.  
\end{proof}

\begin{remark}
We extend the definition of $\TR$ to schemes. Let $\mathrm{Perf}_X$ denote the $\infty$-category of perfect $\mathscr{O}_X$-modules for a scheme $X$, and define $\TR(X) = \TR(\mathrm{Perf}_X)$. Consequently, the construction $X \mapsto \TR(X)$ satisfies Nisnevich descent on quasi-compact quasi-separated schemes by a result of Thomason~\cite{TT07} since $\TR$ is a localizing invariant. 
\end{remark}

We end by discussing various descent properties for $\TR$ building on the work of~\cite{BMS19,Kee20,AN20,CMNN20,CM19}. In the following, we will let $\CAlg^\heartsuit$ denote the category of discrete commutative rings. 

\begin{corollary} \label{corollary:TR_fpqc}
The construction $R \mapsto \TR(R)$ determines a functor of $\infty$-categories
\[
	\TR : \CAlg^\heartsuit \to \Sp^{\Fr}
\]
which is a sheaf for the fpqc topology on $\CAlg^\heartsuit$. 
\end{corollary}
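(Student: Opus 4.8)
The plan is to deduce fpqc descent for $\TR$ from fpqc descent for $\THH$ together with the fact that $\TR$ is a right adjoint, hence limit-preserving, as established in Proposition~\ref{proposition:TR_right_adjoint}. First I would recall that the composite $\CAlg^\heartsuit \to \CAlg^{\mathrm{cn}} \xrightarrow{\THH} \CycSp$ satisfies fpqc descent: this is the content of the work cited at the end of the excerpt (Bhatt--Morrow--Scholze~\cite{BMS19} together with the refinements in~\cite{Kee20,CMNN20,CM19}), which shows that $\THH$, and more generally $\THH(-;\Z_p)$ and the relevant completed variants, are sheaves for the fpqc topology on discrete commutative rings. Concretely, for an fpqc cover $R \to S$ with Čech nerve $S^{\bullet}$, the canonical map $\THH(R) \to \varprojlim_{\Delta} \THH(S^{\bullet})$ is an equivalence in $\CycSp$.

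Second, I would apply the functor $\TR : \CycSp \to \Sp^{\Fr}$ to this equivalence. Since $\TR$ is a right adjoint to the canonical functor $\Sp^{\Fr} \to \CycSp$ by Proposition~\ref{proposition:TR_right_adjoint}, it preserves all small limits; in particular it preserves the totalization over $\Delta$. Therefore
\[
	\TR(R) \simeq \TR\big(\varprojlim_{\Delta} \THH(S^{\bullet})\big) \simeq \varprojlim_{\Delta} \TR(\THH(S^{\bullet})) \simeq \varprojlim_{\Delta} \TR(S^{\bullet}),
\]
which is precisely the assertion that $R \mapsto \TR(R)$ is an fpqc sheaf with values in $\Sp^{\Fr}$. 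One should also note that the forgetful functor $\Sp^{\Fr} \to \Sp^{\B\T}$, and further to $\Sp$, is conservative and preserves limits, so checking the sheaf condition in $\Sp^{\Fr}$ is no stronger than checking it on underlying spectra; this lets one import the descent statements for $\THH$ in whatever form they are stated in the literature.

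The main subtlety — and the step I would be most careful about — is that fpqc descent for $\THH$ of discrete rings is genuinely a theorem rather than a formality: $\THH$ of a discrete ring is typically not bounded below after the relevant completions are removed, and the descent statement for integral $\THH$ (as opposed to $p$-complete $\THH$) requires care. The cleanest route is to invoke the results of~\cite{BMS19,CMNN20} in the form: $\THH(-)$ satisfies fpqc descent on $\CAlg^\heartsuit$ as a functor to $\CycSp$, which follows from $1$-bounded-above descent for $\HH(-/\Z)$ (itself a consequence of faithfully flat descent for the cotangent complex) bootstrapped along the Postnikov filtration, or alternatively from the identification of the graded pieces of the motivic filtration with prismatic cohomology, for which fpqc descent is known by~\cite{BS19}. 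Once that input is in hand, the remaining argument is purely formal via the right-adjointness of $\TR$. I would therefore structure the proof as: (i) cite fpqc descent for $\THH : \CAlg^\heartsuit \to \CycSp$; (ii) invoke Proposition~\ref{proposition:TR_right_adjoint} to move the limit past $\TR$; (iii) conclude, noting conservativity of $\Sp^{\Fr} \to \Sp$ to reduce the sheaf condition to underlying spectra if desired.
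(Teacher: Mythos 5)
Your proof is essentially the paper's argument: cite fpqc descent for $\THH$ valued in $\CycSp$ (via~\cite{BMS19}) and then use Proposition~\ref{proposition:TR_right_adjoint} to commute $\TR$ past the totalization. The only difference is that the paper makes the first step precise by invoking the lax-equalizer presentation of $\CycSp$, so that descent for $\THH$ valued in $\CycSp$ reduces to descent for $\THH$ and each $\THH^{tC_p}$ valued in $\Sp^{\B\T}$, which is exactly what~\cite[Corollary 3.4, Remark 3.5]{BMS19} gives; your worries about the boundedness of integral $\THH$ are thereby handled by the citation, and your closing remark about conservativity of $\Sp^{\Fr}\to\Sp$ is correct but not needed.
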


\begin{proof}
We first show that the functor $\THH : \CAlg^\heartsuit \to \CycSp$ is an fpqc sheaf on $\CAlg^\heartsuit$. As a consequence of the construction of the $\infty$-category $\CycSp$ as a lax equalizer it suffices to show that the functors $\THH : \CAlg^\heartsuit \to \Sp^{\B\T}$ and $\THH^{tC_p} : \CAlg^\heartsuit \to \Sp^{\B\T}$ are fpqc sheaves for every prime $p$. This follows from~\cite[Corollary 3.4 and Remark 3.5]{BMS19} since limits in $\Sp^{\B\T}$ are computed pointwise. This shows the desired statement since $\TR : \CycSp \to \Sp^{\Fr}$ preserves limits. 
\end{proof}

The proof of Corollary~\ref{corollary:TR_fpqc} above shows that every descent result for $\THH$ regarded as a functor with values in the $\infty$-category of cyclotomic spectra yields a corresponding descent statement for $\TR$. In~\cite{Kee20}, Keenan extends the result of Bhatt--Morrow--Scholze~\cite{BMS19} on fpqc descent for $\THH$ to connective $\E_\infty$-rings, and Antieau--Nikolaus~\cite{AN20} prove that $\THH$ is a hypercomplete sheaf with values in the $\infty$-category of cyclotomic spectra for the pro-étale topology on $\CAlg^{\heartsuit}$. In~\cite{CM19}, Clausen--Mathew prove that $\THH$ is a Postnikov complete sheaf for the étale topology on $\E_2$-rings extending their previous work with Naumann and Noel in~\cite{CMNN20}.

\section{Comparison with genuine TR} \label{section:genuine}
In this section, we establish the main technical result of this paper which asserts that the classical construction of $\TR$ agrees with the construction of $\TR$ considered in \textsection\ref{subsection:TR}. In \textsection\ref{subsection:equivariant}, we review the formalism of Mackey functors defined on orbital $\infty$-categories following Barwick~\cite{Bar17}, and employ this to construct the $\infty$-category of genuine $\T$-spectra following Barwick--Glasman~\cite{BG16}. In \textsection\ref{subsection:tcart}, we define the notion of a topological Cartier modules, and obtain an explicit description of the free topological Cartier module on a spectrum with Frobenius lifts as an instance of the Segal--tom Dieck splitting for Mackey functors defined on orbital $\infty$-categories. In \textsection\ref{subsection:genuine_cycsp}, we briefly recall the classical construction of $\TR$ following Blumberg--Mandell~\cite{BM16}, and prove that this coincides with the construction of $\TR$ considered in \textsection\ref{subsection:TR} in the bounded below case. 

\subsection{Equivariant stable homotopy theory} \label{subsection:equivariant}
In this section, we briefly review the notions from equivariant stable homotopy theory that will play an important role in this paper. In~\cite{GM11}, Guillou--May establish a model for the homotopy theory of $G$-spectra in the sense of~\cite{LMS86,MM02,HHR16} for a finite group $G$, in terms of spectral Mackey functors, and Barwick~\cite{Bar17} developed an $\infty$-categorical approach to Mackey functors on orbital $\infty$-categories. Barwick--Dotto--Glasman--Nardin--Shah revisit the result of Guillou--May in the general context of parametrized homotopy theory~\cite{BDGNS16, Nar16}. Our goal in this section is to briefly review the formalism of Mackey functors following~\cite{Bar17}, and use this to construct the $\infty$-category of genuine $\T$-spectra following Barwick--Glasman~\cite{BG16}.

\begin{definition} \label{definition:orbital_infty_category}
The finite coproduct completion $\Fin_T$ of a small $\infty$-category $T$ is the smallest full subcategory of $\cat{P}(T)$ which contains the essential image of the Yoneda embedding and which is closed under finite coproducts. A small $\infty$-category $T$ is orbital if the finite coproduct completion of $T$ admits pullbacks.
\end{definition}

\begin{remark}
Definition~\ref{definition:orbital_infty_category} was introduced by Barwick--Dotto--Glasman--Nardin--Shah in~\cite{BDGNS16}. Note that the finite coproduct completion of an orbital $\infty$-category is disjunctive in the sense of Barwick \cite{Bar17}. Additionally, the epiorbital $\infty$-categories in the sense of Glasman~\cite{Gla15} are examples of orbital $\infty$-categories.
\end{remark}

For instance, if $G$ is a finite group, then the finite coproduct completion of the orbit category of $G$ is equivalent to the category of finite $G$-sets, since every finite $G$-set admits a unique decomposition as a disjoint union of orbits. In other words, the orbit category of $G$ is orbital in the sense of Definition~\ref{definition:orbital_infty_category}. 

\begin{remark} \label{remark:universal_property_Fin}
The finite coproduct completion of a small $\infty$-category $T$ is characterized by a universal property: For every $\infty$-category $\cat{D}$ which admits finite coproducts, the Yoneda embedding $j : T \to \Fin_T$ induces an equivalence of $\infty$-categories
\[
	\Fun^{\amalg}(\Fin_T, \cat{D}) \to \Fun(T, \cat{D}),
\]
where $\Fun^{\amalg}(\Fin_T, \cat{D})$ denotes the full subcategory of $\Fun(\Fin_T, \cat{D})$ spanned by those functors which preserve finite coproducts. This follows from~\cite[Proposition 5.3.6.1]{Lur09} by taking $\cat{K} = \mathrm{Fin}$ and $\cat{R} = \varnothing$. Note that the inverse is determined by forming the left Kan extension along the Yoneda embedding. 
\end{remark}

Let $T$ be an orbital $\infty$-category, and let $\Span(\Fin_T)$ denote the $\infty$-category of spans in the finite coproduct completion of $T$. Concretely, the objects of $\Span(\Fin_T)$ are given by the objects of $\Fin_T$, and a morphism from $X$ to $X'$ in $\Span(\Fin_T)$ is given by a span
\[
\begin{tikzcd}[sep = small]
	& Y \arrow{dr} \arrow{dl} & \\
	X & & X' 
\end{tikzcd}
\]
in $\Fin_T$. Additionally, if $X' \leftarrow Y' \rightarrow X''$ is a morphism from $X'$ to $X''$ in $\Span(\Fin_T)$, then composition is defined by forming a pullback in $\Fin_T$
\[
\begin{tikzcd}[sep = small]
	& & P \arrow{dl} \arrow{dr} & & \\
	& Y \arrow{dl} \arrow{dr} & & Y' \arrow{dl} \arrow{dr} & \\
	X & & X' & & X''
\end{tikzcd}
\]
which exists by virtue of the assumption that $T$ is an orbital $\infty$-category. The reader is invited to consult~\cite[Proposition 3.4]{Bar17} for a precise construction of $\Span(\Fin_T)$ as a complete Segal space.

\begin{construction} \label{construction:maps_into_span}
Let $T$ be an orbital $\infty$-category. There is a functor $i : \Fin_T^{\op} \to \Span(\Fin_T)$ which is the identity on objects, and determined by the following construction on morphisms
\[
	(X \to X') \mapsto (X' \leftarrow X \xrightarrow{\id} X)
\]
Consequently, we obtain a functor $T^{\op} \to \Span(\Fin_T)$ given by precomposing the functor $i$ with the opposite of the canonical functor $T \to \Fin_T$. Similarly, there is a functor $i' : \Fin_T \to \Span(\Fin_T)$ which is the identity on objects, and determined by  the following construction on morphisms
\[
	(X \to X') \mapsto (X \xleftarrow{\id} X \to X')
\]
We obtain a functor $T \to \Span(\Fin_T)$ given by precomposing the functor $i'$ above with the canonical functor $T \to \Fin_T$. See~\cite[Notation 3.9]{Bar17} for a formal description of these functors. 
\end{construction}

If $T$ is an orbital $\infty$-category, then the $\infty$-category $\Span(\Fin_T)$ is semiadditive and the sum is given by the coproduct in $\Fin_T$ (cf.~\cite[Proposition 4.3]{Bar17}). Consequently, the $\infty$-category of Mackey functors on an orbital $\infty$-category is defined as follows (cf.~\cite[Definition 6.1]{Bar17}):

\begin{definition}
Let $T$ denote an orbital $\infty$-category, and let $\cat{D}$ denote an additive $\infty$-category. The $\infty$-category of $\cat{D}$-valued Mackey functors on $T$ is defined by
\[
	\Mack_{\cat{D}}(T) = \Fun^{\times}(\Span(\Fin_T), \cat{D}),
\]
where $\Fun^{\times}(\Span(\Fin_T), \cat{D})$ denotes the full subcategory of $\Fun(\Span(\Fin_T), \cat{D})$ spanned by those functors which preserve products. If $\cat{D} = \Sp$ is the $\infty$-category of spectra, then we write $\Mack(T)$ instead of $\Mack_{\Sp}(T)$, and refer to the former as the $\infty$-category of spectral Mackey functors on $T$. 
\end{definition}

\begin{example}
The orbit category of a finite group $G$ is an example of an orbital $\infty$-category. The $\infty$-category of spectral Mackey functors on $\Orb_G$ is equivalent to the $\infty$-category of genuine $G$-spectra by the work of Guillou--May~\cite{GM11}. By the $\infty$-category of genuine $G$-spectra we mean the underlying $\infty$-category of the category of orthogonal $G$-spectra equipped with the model structure established by Mandell--May in~\cite{MM02}. Alternatively, we refer to Nardin~\cite[Theorem A.4]{Nar16} or Clausen--Mathew--Naumann--Noel~\cite[Appendix A]{CMNN20b} for a direct comparison using Barwick's model of spectral Mackey functors.
\end{example}

We construct the $\infty$-category of genuine $\T$-spectra with respect to the family of finite cyclic subgroups of $\T$ using the formalism of spectral Mackey functors on orbital $\infty$-categories following Barwick--Glasman~\cite{BG16}. An alternative construction has been obtained by Ayala--Mazel-Gee--Rozenblyum~\cite{AMGR17a}, and we refer the reader to the foundational work of Lewis--May--Steinberger~\cite{LMS86} and Mandell--May~\cite{MM02} for the classical approach. 

\begin{definition} \label{definition:orbitcategory}
The orbit $\infty$-category $\OrbT$ of the circle is defined as the full subcategory of the $\infty$-category $\Spaces^{\B\T}$ spanned by those spaces with $\T$-action of the form $\T/C_n$ for every $n \geq 1$. 
\end{definition}

The orbit $\infty$-category of the circle is equivalent to the cyclonic category of Barwick--Glasman~\cite[Definition 1.10]{BG16} as explained in~\cite[Remark 1.13]{BG16}, so we conclude that the orbit $\infty$-category $\OrbT$ is orbital by virtue of Barwick--Glasman~\cite[Proposition 1.25.1]{BG16}.

\begin{definition}
The $\infty$-category of genuine $\T$-spectra is defined by $\Sp_{\T} = \Mack(\OrbT)$. 
\end{definition}

\begin{remark}
The $\infty$-category of genuine $\T$-spectra is equivalent to the underlying $\infty$-category of orthogonal $\T$-spectra with respect to the family of finite subgroups of $\T$, equipped with the stable model structure established by Mandell--May~\cite{MM02}. This is proved by Barwick--Glasman~\cite[Theorem 2.8]{BG16}, where $\Sp_{\T}$ is referred to as the $\infty$-category of cyclonic spectra.
\end{remark}

We construct an action of the multiplicative monoid $\N^{\times}$ on the $\infty$-category $\Sp_{\T}$ of genuine $\T$-spectra which in turn is determined by an action of $\N^{\times}$ on the orbit $\infty$-category $\OrbT$.

\begin{construction} \label{construction:actionN_OrbT}
Let $\mathbf{Orb}_{\T}$ denote the category whose objects are given by the orbits $\T/C_n$ for every integer $n \geq 1$, and whose morphisms are given by $\T$-equivariant maps. For every pair of positive integers $m$ and $n$, there is a canonical bijection
\[
	\Hom_{\mathbf{Orb}_{\T}}(\T/C_m, \T/C_n) \simeq (\T/C_n)^{C_m},
\]
where the right hand side is equipped with the canonical topology. As a consequence, we may regard $\mathbf{Orb}_{\T}$ as a topological category. The underlying $\infty$-category of $\mathbf{Orb}_{\T}$ is equivalent to the orbit $\infty$-category $\OrbT$ as defined in Definition~\ref{definition:orbitcategory}.
Indeed, note that there is an essentially surjective functor of $\infty$-categories $\mathbf{Orb}_{\T} \to \OrbT$ which is fully faithful since the canonical map
\[
	(\T/C_n)^{C_m} \to (\T/C_n)^{hC_m}
\]
is an equivalence of spaces for every pair of positive integers $m$ and $n$. The analogues statement for higher-dimensional Lie groups fails. For every integer $k \geq 1$, the construction $\T/C_n \mapsto \T/C_{kn}$ determines a functor of topological categories $i_k : \mathbf{Orb}_{\T} \to \mathbf{Orb}_{\T}$ which in turn defines an action of the multiplicative monoid $\N^{\times}$ on $\mathbf{Orb}_{\T}$. This defines an action of $\N^{\times}$ on the orbit $\infty$-category $\OrbT$ which is determined by a functor of $\infty$-categories $\B\N^{\times} \to \Cat_\infty$. 
\end{construction}

The following result is due to Barwick--Glasman~\cite[Lemma 3.2]{BG16}.

\begin{lemma} \label{lemma:N_action_SpT}
For every positive integer $k$, there is an adjunction of $\infty$-categories
\[
\begin{tikzcd}[column sep = large]
	\Fin_{\OrbT} \arrow[yshift=0.7ex]{r}{\Fin(i_k)} &  \Fin_{\OrbT} \arrow[yshift=-0.7ex]{l}{p_k}
\end{tikzcd}
\]
where the left adjoint $\Fin(i_k)$ additionally preserves finite coproducts. The action of the multiplicative monoid $\N^\times$ on the $\infty$-category $\OrbT$ extends to an action on the $\infty$-category $\Sp_{\T}$.  
\end{lemma}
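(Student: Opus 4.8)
The plan is to produce, for each positive integer $k$, an adjunction $\Fin(i_k) \dashv p_k$ on the finite coproduct completion of $\OrbT$ and then promote the resulting $\N^\times$-action from $\OrbT$ to $\Sp_\T = \Mack(\OrbT)$ by functoriality of the span construction. First I would observe that the functor $i_k \colon \mathbf{Orb}_\T \to \mathbf{Orb}_\T$, $\T/C_n \mapsto \T/C_{kn}$, admits a right adjoint on the level of orbit categories: the right adjoint sends $\T/C_n$ to $\T/C_{n/(n,k)}$ (or more invariantly, to the target of the "$k$th power" map), and one checks the hom-space identity $\Hom(\T/C_{km}, \T/C_n) \simeq \Hom(\T/C_m, \T/C_{n/(n,k)})$ directly using the description $\Hom_{\mathbf{Orb}_\T}(\T/C_a, \T/C_b) \simeq (\T/C_b)^{C_a}$ from Construction~\ref{construction:actionN_OrbT}. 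Since $i_k$ preserves coproducts (it is defined on orbits and extended), its left Kan extension $\Fin(i_k)$ to $\Fin_{\OrbT}$ preserves finite coproducts by Remark~\ref{remark:universal_property_Fin}, and the pointwise adjunction assembles to an adjunction $\Fin(i_k) \dashv p_k$ on $\Fin_{\OrbT}$, with $p_k$ the coproduct-preserving extension of the orbit-level right adjoint followed by the observation that a right adjoint between presheaf categories restricted to finite-coproduct completions again lands there (because it is computed by precomposition with $i_k$, hence preserves the relevant limits).

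Next I would pass to spans. The key point is that an adjunction $L \dashv R$ between disjunctive $\infty$-categories in which $L$ preserves finite coproducts induces a functor on span categories: one can use $L$ on the backward legs and $R$ on the forward legs of a span, or invoke the functoriality $\Span(-)$ enjoys with respect to adjunctions (this is standard; see Barwick~\cite{Bar17}). Concretely, the functor $\B\N^\times \to \Cat_\infty$ classifying the $\N^\times$-action on $\OrbT$ lifts to a functor $\B\N^\times \to \Cat_\infty$ landing in disjunctive $\infty$-categories with coproduct-preserving transition functors, and applying $\Span(\Fin_{(-)})$ produces a functor $\B\N^\times \to \Cat_\infty$, $\ast \mapsto \Span(\Fin_{\OrbT})$, i.e. an $\N^\times$-action on $\Span(\Fin_{\OrbT})$. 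Finally, postcomposing with $\Mack_{\Sp}(-) = \Fun^\times(-, \Sp)$ — which is contravariantly functorial, but the existence of both adjoints $\Fin(i_k) \dashv p_k$ means the induced endofunctor of $\Span(\Fin_{\OrbT})$ is an equivalence composed with... — more carefully, since each $i_k$ has a right adjoint, the induced endofunctor $\Span(\Fin(i_k))$ of $\Span(\Fin_{\OrbT})$ is itself part of an adjunction (self-duality of span categories makes left and right adjoints coincide up to the span duality), so restriction along it, $\Fun^\times(\Span(\Fin_{\OrbT}), \Sp) \to \Fun^\times(\Span(\Fin_{\OrbT}), \Sp)$, is a well-defined endofunctor preserving products, and the coherences assemble to an action of $\B\N^\times$ on $\Sp_\T$. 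I would verify that this recovers the naive "inflation/restriction along $C_n \mapsto C_{kn}$" operation on genuine $\T$-spectra.

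The hard part will be bookkeeping the coherence data for the $\N^\times$-action rather than any single adjunction: producing the adjunction $\Fin(i_k) \dashv p_k$ for a fixed $k$ is essentially the content already attributed to Barwick--Glasman~\cite[Lemma 3.2]{BG16}, so I would cite it, but checking that these adjoints are compatible as $k$ ranges over $\N^\times$ — i.e. that $p_k \circ p_l \simeq p_{kl}$ coherently, dually to $i_k \circ i_l \simeq i_{kl}$ — requires knowing that $\Span(\Fin_{(-)})$ and $\Fun^\times(-,\Sp)$ are functorial with the appropriate $2$-categorical coherence. The cleanest route is to package the whole thing as: the $\N^\times$-action on $\OrbT$ of Construction~\ref{construction:actionN_OrbT} is by functors admitting both adjoints (indeed the right adjoints also preserve coproducts after extension to $\Fin_{\OrbT}$), hence extends to an action on $\Fin_{\OrbT}$ in the $\infty$-category of disjunctive $\infty$-categories and coproduct-preserving functors; applying the (functorial, product-preserving) composite $\mathcal{C} \mapsto \Fun^\times(\Span(\Fin_\mathcal{C}), \Sp)$ then yields the action on $\Sp_\T$ automatically, with no further coherence to check by hand.
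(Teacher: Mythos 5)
The paper's ``proof'' of this lemma is a single citation to Barwick--Glasman~\cite[Lemma 3.2]{BG16}, so there is no in-paper argument to compare against in detail. Your overall strategy (build the adjunction on $\Fin_{\OrbT}$, then transport it through $\Span$ and $\Fun^\times(-,\Sp)$) is in line with how Barwick--Glasman proceed, but your first step contains a genuine error.

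You claim that $i_k\colon \T/C_n\mapsto \T/C_{kn}$ admits a right adjoint already at the level of $\mathbf{Orb}_\T$, with value $\T/C_{n/(n,k)}$. This is false. The hom-set $\Hom_{\mathbf{Orb}_\T}(\T/C_{km},\T/C_n)\simeq(\T/C_n)^{C_{km}}$ is empty unless $km\mid n$; in particular, if $k\nmid n$ it is empty for \emph{every} $m\geq 1$. So any right adjoint $p_k$ must send $\T/C_n$ to an object with no maps from any orbit, i.e.\ the initial object of $\Fin_{\OrbT}$ (the empty coproduct), which is not an orbit. Thus there is no right adjoint $\OrbT\to\OrbT$, and your proposed formula also fails concretely: for $n=6$, $k=4$ you give $p_4(\T/C_6)=\T/C_3$, yet $\Hom(\T/C_{4m},\T/C_6)=\varnothing$ for all $m$ while $\Hom(\T/C_1,\T/C_3)\neq\varnothing$, so the adjunction identity breaks. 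The correct description on orbits is $p_k(\T/C_n)=\T/C_{n/k}$ if $k\mid n$ and $p_k(\T/C_n)=\varnothing$ otherwise, and precisely because of the second clause $p_k$ is \emph{not} of the form $\Fin(j)$ for any $j\colon\OrbT\to\OrbT$. This is exactly why the lemma is stated on $\Fin_{\OrbT}$ rather than $\OrbT$, and why only $\Fin(i_k)$ is singled out as a coproduct-preserving extension of an orbit-level functor.

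Your parenthetical remark --- that $p_k$ can be computed as restriction $i_k^\ast$ at the level of presheaves, which one then checks lands in $\Fin_{\OrbT}$ --- is the viable route and is compatible with what I just wrote (when $k\nmid n$, $i_k^\ast(\T/C_n)$ is the constant-empty presheaf, the empty coproduct). But as written, this observation sits in tension with, rather than corrects, the erroneous orbit-level claim that precedes it. Once that is fixed, the second half of your proposal (promoting the coherent $\N^\times$-action through $\Span(\Fin_{(-)})$ and $\Fun^\times(-,\Sp)$) is reasonable in outline, though the appeal to ``span self-duality making left and right adjoints coincide'' is imprecise and should be replaced by the actual mechanism: $\Fin(i_k)$ preserves finite coproducts and pullbacks, hence induces a functor on span categories, and the $\N^\times$-coherence is inherited from the strict action on $\mathbf{Orb}_\T$ in Construction~\ref{construction:actionN_OrbT}. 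All of this is the content of~\cite[Lemma 3.2]{BG16}, which the paper simply invokes.
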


\begin{proof}
See~\cite[Lemma 3.2]{BG16}. 
\end{proof}

We have that the action of the multiplicative monoid $\N^{\times}$ on the $\infty$-category $\Sp_{\T}$ of genuine $\T$-spectra afforded by Lemma~\ref{lemma:N_action_SpT} determines a functor of $\infty$-categories
\[
	F_{\Psi} : \B\N^{\times} \to \Cat_\infty
\]
which is determined by the construction which sends $k \in \N^{\times}$ to the endofunctor of the $\infty$-category $\Sp_{\T}$ determined by the construction $X \mapsto X^{C_k}$. Furthermore, there is an action of the multiplicative monoid $\N^{\times}$ on the $\infty$-category $\Sp_{\T}$ given by the geometric fixed points construction $(-)^{\Phi C_k}$. We briefly discuss this, and refer the reader to~\cite[Notation 3.4]{BG16} for a complete treatment.

\begin{example} \label{example:geometric_fixedpoints}
For every positive integer $k$, there is an adjunction of $\infty$-categories
\[
\begin{tikzcd}
	\Sp_{\T} \arrow[yshift=0.7ex]{rr}{(-)^{\Phi C_k}} & & \Sp_{\T} \arrow[yshift=-0.7ex]{ll}{\Span(p_k)^{\ast}},
\end{tikzcd}
\]
where the left adjoint $(-)^{\Phi C_k}$ is defined by left Kan extension along $\Span(p_k)^{\ast}$, where $p_k$ denotes a right adjoint of $\Fin(i_k)$ as in Lemma~\ref{lemma:N_action_SpT}. Consequently, there is an action of the multiplicative monoid $\N^{\times}$ on $\Sp_{\T}$ by functoriality, and there is a functor of $\infty$-categories
\[
	F_{\Phi} : \B\N^{\times} \to \Cat_\infty
\]
which is determined by the construction which sends $k \in \N^\times$ to the endofunctor of the $\infty$-category of genuine $\T$-spectra determined by the construction $X \mapsto X^{\Phi C_k}$. 
\end{example}

\subsection{Topological Cartier modules} \label{subsection:tcart}
We introduce the notion of an integral topological Cartier module using the formalism of spectral Mackey functors on orbital $\infty$-categories as reviewed in \textsection\ref{subsection:equivariant}, extending the work of Antieau--Nikolaus~\cite{AN20} in the $p$-typical situation. Furthermore, we prove a general version of the Segal--tom Dieck splitting for spectral Mackey functors on orbital $\infty$-categories, and use this to obtain an explicit formula for the free topological Cartier module on a spectrum with Frobenius lifts. The starting point is the following result:

\begin{lemma} \label{lemma:BW_orbital}
The $\infty$-category $\B\W$ is orbital, and there is an equivalence of $\infty$-categories
\[
	\B\W \simeq (\OrbT)_{h\N^{\times}},
\]
where the multiplicative monoid $\N^{\times}$ acts on the orbit $\infty$-category $\OrbT$ as in Construction~\ref{construction:actionN_OrbT}.
\end{lemma}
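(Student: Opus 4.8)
The statement has two parts: (i) $\B\W$ is orbital, and (ii) $\B\W \simeq (\OrbT)_{h\N^\times}$. I would prove (ii) first, since (i) should then follow from (ii) together with the known orbitality of $\OrbT$ (Barwick--Glasman) and the fact that $\N^\times$ acts on $\OrbT$ in a way compatible with the disjunctive structure — essentially Lemma~\ref{lemma:N_action_SpT}.

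For (ii), the key is to unwind both sides. The left side $\B\W$ has one object with endomorphism monoid $\W = \T \rtimes \N^\times$. The right side is the homotopy colimit (in $\Cat_\infty$) of the functor $\B\N^\times \to \Cat_\infty$ classifying the $\N^\times$-action on $\OrbT$ described in Construction~\ref{construction:actionN_OrbT}, i.e.\ the Grothendieck construction (total category of the cocartesian fibration) with the fibers inverted, or rather the localization of the total category at the cocartesian edges. The plan is to identify $\OrbT$ itself with $\B\T$ suitably: the orbit $\infty$-category $\OrbT$ has objects $\T/C_n$, and while it is not a one-object category, there is a standard cofinality/Morita-type reduction. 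Actually the cleaner route mirrors the proof of Lemma~\ref{lemma:laxorbits_witt_paracyclic}: we have $\B\W \simeq (\B\T)_{\ell\N^\times} \simeq (\B\T)_{h\N^\times}$ since... no, $\N^\times$ is not a group. So instead I would argue: $(\OrbT)_{h\N^\times}$ is computed as the colimit over $\B\N^\times$, and there is an $\N^\times$-equivariant functor $\B\T \to \OrbT$ picking out the object $\T/C_1 = \T$, which I claim becomes an equivalence after taking $h\N^\times$. Concretely, $\OrbT$ is generated under the $\N^\times$-action by the single object $\T/C_1$ (since $\T/C_n = i_n(\T/C_1)$), and the endomorphisms of $\T/C_1$ in $\OrbT$ form $\T$ while the action of $k \in \N^\times$ sends this to $\T/C_k$ with a transition map realizing the $k$-th power covering. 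Taking the lax/homotopy quotient by $\N^\times$ collapses all the $\T/C_n$ to a single object and adjoins the degree-$k$ maps, yielding precisely one object with endomorphisms $\T \rtimes \N^\times = \W$. I would make this precise by exhibiting the functor $\B\W \to (\OrbT)_{h\N^\times}$ (using that $\W = \T \rtimes \N^\times$ and the universal property of the semidirect product / Grothendieck construction) and checking it is essentially surjective (clear, one object each) and fully faithful (a mapping-space computation: $\Map_{(\OrbT)_{h\N^\times}}(\ast,\ast) \simeq \colim_{k} \Map_{\OrbT}(\T/C_k, \T/C_1) \simeq \colim_k (\T/C_1)^{C_k} \simeq \coprod_{k \geq 1}\T \simeq \W$ as a monoid, using the identification from Example~\ref{example:morphisms_in_epicyclic} and Construction~\ref{construction:actionN_OrbT} that $\Hom(\T/C_m,\T/C_n) \simeq (\T/C_n)^{C_m}$).

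For (i), given (ii), the plan is: $\OrbT$ is orbital, $\N^\times$ acts on it through coproduct-preserving functors (the left adjoints $\Fin(i_k)$ of Lemma~\ref{lemma:N_action_SpT}), so the finite-coproduct completion $\Fin_{\OrbT}$ carries an $\N^\times$-action and $\Fin_{(\OrbT)_{h\N^\times}} \simeq (\Fin_{\OrbT})_{h\N^\times}$ (colimits of coproduct-preserving functors). One then needs that $(\Fin_{\OrbT})_{h\N^\times}$ admits pullbacks. Since $\Fin_{\OrbT}$ has pullbacks and the transition functors $\Fin(i_k)$ preserve them — this is exactly the content of $\Fin(i_k)$ having a right adjoint $p_k$ together with a Beck--Chevalley / base-change compatibility, which should be extractable from Lemma~\ref{lemma:N_action_SpT} and the explicit description of $i_k$ — the homotopy colimit inherits pullbacks. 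I would phrase this via the characterization that a colimit of $\infty$-categories with pullbacks along pullback-preserving functors (with suitable adjointability) again has pullbacks, computed "objectwise up to the action."

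The main obstacle I anticipate is the fully faithfulness in (ii): one must carefully identify the filtered (over the poset $\N^\times$ under divisibility, or rather the category $\B\N^\times$) colimit of mapping spaces and check the induced monoid structure on $\coprod_{k\geq1}\T$ is the semidirect product $\W$ rather than, say, the direct product — i.e.\ that the transition maps twist the $\T$-action by the $k$-th power map, which is precisely where the covering-map action of $\N^\times$ on $\T$ enters. This requires tracking the composition law in $\OrbT$ (composing $\T/C_m \to \T/C_n$ with $\T/C_n \to \T/C_\ell$) through the Grothendieck construction, and is the one genuinely calculational point; everything else is formal manipulation of (co)cartesian fibrations and universal properties.
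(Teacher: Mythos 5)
Your strategy for part (ii) — localize the Grothendieck construction $\cat{E}$ of the $\N^\times$-action at the cocartesian edges and identify $\mathrm{End}(\T/C_1)$ with $\W = \T \rtimes \N^\times$ — is the same idea the paper uses. However, your mapping-space computation needs more care than you acknowledge: in $\cat{E}$ one has $\Map_\cat{E}(\T/C_1, \T/C_1) \simeq \coprod_{k \geq 1}\Map_{\OrbT}(\T/C_k, \T/C_1)$ essentially by definition of the Grothendieck construction, and what requires proof is that this survives the localization $\cat{E} \to \cat{E}[\mathrm{cocart}^{-1}]$; the "colimit of mapping spaces" formula you invoke is not a general fact about localization at cocartesian edges over $\B\N^\times$ (which is a monoid quotient, not a filtered colimit — you conflate the two). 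The paper sidesteps this by exhibiting a functor $\mathrm{W}\colon \cat{E} \to \mathrm{Mfld}_1^c$ (compact oriented $1$-manifolds with positive-degree covering maps) that inverts cocartesian edges and checking that the induced functor on the localization is fully faithful.

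The more serious gap is in your deduction of (i) from (ii). The claim that a colimit in $\Cat_\infty$ of $\infty$-categories admitting pullbacks along pullback-preserving (or even adjointable) functors again admits pullbacks is false in general: for instance take $\cat{C} = [1]$ with the constant endofunctor at $0$ — this preserves pullbacks and has a right adjoint (the constant functor at $1$), yet $[1]_{h\N} \simeq \B\N$ does not admit pullbacks. You flag "suitable adjointability / Beck--Chevalley" as the needed hypothesis, but you neither verify it nor explain how it would rescue the argument, and note also that $\Fin(i_k)$ being a left adjoint tells you it preserves \emph{colimits}, not limits, so even pullback-preservation for the transition functors needs its own argument. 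The paper proves orbitality \emph{first and directly}: it exhibits a fully faithful coproduct-preserving embedding $\Fin_{\B\W} \hookrightarrow \mathrm{Mfld}_1^c$ and then explicitly computes the pullback of the $m$-th and $n$-th power self-maps of $S^1$ as a disjoint union of $\gcd(m,n)$ circles. Trying to deduce orbitality from the quotient description doesn't actually save work, because making the quotient description explicit already requires the $\mathrm{Mfld}_1^c$ picture (or an equivalent concrete handle on the localization), and at that point the pullback computation is just as easy to do directly.
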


\begin{proof}
We first prove that the $\infty$-category $\B\W$ is orbital. Let $\mathbf{Mfld}_1^c$ denote the topological category whose objects are given by compact oriented $1$-manifolds and whose space of morphisms between a pair of compact oriented $1$-manifolds is given by the set of covering maps of positive degree equipped with the compact-open topology. In the following, we will let $\mathrm{Mfld}_1^c$ denote the underlying $\infty$-category of the topological category $\mathbf{Mfld}_1^c$. The construction $\ast \mapsto S^1$ determines a functor of $\infty$-categories $\B\W \hookrightarrow \mathrm{Mlfd}_1^c$ which is fully faithful since there is an equivalence of $\E_1$-monoids $\W \simeq \mathrm{End}_{\mathrm{Mfld}_1^c}(S^1)$. We obtain a coproduct-preserving functor of $\infty$-categories
\[
	\Fin_{\B\W} \to \mathrm{Mfld}_1^c
\]
given by forming the left Kan extension of the functor $\B\W \hookrightarrow \mathrm{Mfld}_1^c$ along the Yoneda embedding $\B\W \to \Fin_{\B\W}$ by virtue of Remark~\ref{remark:universal_property_Fin}, and this functor is given by the construction $\ast \amalg \cdots \amalg \ast \mapsto S^1 \amalg \cdots \amalg S^1$. Consequently, it suffices to show that the $\infty$-category $\mathrm{Mfld}_1^c$ admits pullbacks since the functor $\Fin_{\B\W} \to \mathrm{Mfld}_1^c$ reflects pullbacks. To this end, it suffices to show that the diagram
\[
\begin{tikzcd}
	& S^1 \arrow{d}{\alpha_m} \\
	S^1 \arrow{r}{\alpha_n} & S^1
\end{tikzcd}
\]
admits a pullback in $\mathrm{Mfld}_1^c$, where $\alpha_k$ denotes the $k$th power map of $S^1$ for every integer $k \geq 1$. Let $g = \mathrm{gcd}(m, n)$ denote the greatest common divisor of $m$ and $n$. Then the following diagram 
\[
\begin{tikzcd}[sep = large]
	S^1 \amalg \cdots \amalg S^1 \arrow{r}{\alpha_{\frac{n}{g}} \circ (g \alpha_g)} \arrow[swap]{d}{\alpha_{\frac{m}{g}} \circ (g \alpha_g)} & S^1 \arrow{d}{\alpha_m} \\
	S^1 \arrow{r}{\alpha_n} & S^1
\end{tikzcd}
\]
is a pullback of the diagram above in the $\infty$-category $\mathrm{Mfld}_1^c$, where $g \alpha_g$ denotes the self-map of the $g$-fold coproduct of $S^1$ with itself defined by the formula $g\alpha_g = \alpha_g \amalg \cdots \amalg \alpha_g$. This proves that the $\infty$-category $\B\W$ is orbital, so it remains to prove that there is an equivalence of $\infty$-categories $\B\W \simeq (\OrbT)_{h\N^\times}$. Let $\cat{E}$ denote the $2$-category defined as follows:

\begin{itemize}[leftmargin=2em, topsep=5pt, itemsep=3pt]
	\item An object of $\cat{E}$ is given by an object $\T/C_n$ of $\OrbT$.

	\item A morphism from $\T/C_m$ to $\T/C_n$ in $\cat{E}$ is given by a pair $(k, f)$ consisting of an element $k$ of $\N^\times$, and a morphism $f : \T/C_{km} \to \T/C_n$ in $\OrbT$. If $(l, g)$ is an additional morphism from $\T/C_n$ to $\T/C_r$, then $(l, g) \circ (k, f) = (lk, h)$, where $h$ is the morphism in $\OrbT$ given by
	\[
		\T/C_{lkm} \xrightarrow{i_l(f)} \T/C_{ln} \xrightarrow{g} \T/C_r.
	\]

	\item Let $(k', f')$ denote an additional morphism from $\T/C_m$ to $\T/C_n$ in $\cat{E}$. If $k = k'$, then
	\[
		\Hom_{\Hom_{\cat{E}}(\T/C_m, \T/C_n)}((k, f), (k', f')) = \Hom_{\Hom_{\OrbT}(\T/C_{km}, \T/C_n)}(f, g)
	\]
	and empty otherwise. We have used that $\OrbT$ is equivalent to a $2$-category. 
\end{itemize}

The construction $(k, f) \mapsto k$ determines a functor $p : \cat{E} \to \B\N^\times$ which is a cocartesian fibration classifying the action of the multiplicative monoid $\N^\times$ on $\OrbT$. The fact that $p$ is a cocartesian fibration follows from~\cite[Lemma 3.10 and Lemma 3.11]{BG16}. Consequently, there is an equivalence
\[
	(\OrbT)_{h\N^\times} \simeq \cat{E}[\mathrm{cocart}^{-1}]
\]
by~\cite[Corollary 3.3.4.3]{Lur09}, where $\cat{E}[\mathrm{cocart}^{-1}]$ denotes the localization at the set of $p$-cocartesian edges of $\cat{E}$. We prove that there is an equivalence of $\infty$-categories $\cat{E}[\mathrm{cocart}^{-1}] \simeq \B\W$. For every pair of elements $k$ and $m$ of $\N^\times$, the pair $(k, \id)$ defines a morphism $\T/C_m \to \T/C_{km}$ which is a $p$-cocartesian edge of $\cat{E}$. Indeed, if $(l, f)$ is an additional morphism $\T/C_m \to \T/C_n$ in $\cat{E}$ such that $l = kr$ for some $r \in \N^\times$, then the pair $(r, f)$ defines a morphism $\T/C_{km} \to \T/C_n$ in $\cat{E}$ such that $(l, f) = (r, f) \circ (k, \id)$. In fact, every $p$-cocartesian edge of $\cat{E}$ is of this form. Consider the assignment which regards $\T/C_n$ as an object of the $\infty$-category $\mathrm{Mfld}_1^c$, and which carries a morphism $(k, f) : \T/C_m \to \T/C_n$ in $\cat{E}$ to the following map
\[
	\T/C_m \xrightarrow{\sqrt[k]{-}} \T/C_{km} \xrightarrow{f} \T/C_n,
\]
where the first map is given by $[x] \mapsto [\sqrt[k]{x}]$. We claim that this assignment determines a functor $\mathrm{W} : \cat{E} \to \mathrm{Mfld}_1^c$. Indeed, this assignment is compatible with composition: if $(l, g) : \T/C_n \to \T/C_r$ is an additional morphism in $\cat{E}$, then the following composite
\[
	\T/C_m \xrightarrow{\sqrt[lk]{-}} \T/C_{lkm} \xrightarrow{i_l(f)} \T/C_{ln} \xrightarrow{g} \T/C_r
\]
is canonically equivalent to the following composite
\[
	\T/C_m \xrightarrow{\sqrt[k]{-}} \T/C_{km} \xrightarrow{f} \T/C_n \xrightarrow{\sqrt[l]{-}} \T/C_{ln} \xrightarrow{g} \T/C_r.  
\]
since the morphism $i_l(f)$ is given by $\T/C_{lkm} \xrightarrow{(-)^l} \T/C_{km} \xrightarrow{f} \T/C_n \xrightarrow{\sqrt[l]{-}} \T/C_{ln}$. Consequently, we obtain a functor of $\infty$-categories $\mathrm{W} : \cat{E} \to \mathrm{Mfld}_1^c$. Note that the map $\T/C_m \to \T/C_{km}$ is an equivalence in $\mathrm{Mfld}_1^c$, so the functor $\mathrm{W}$ carries every $p$-cocartesian edge of $\cat{E}$ to an equivalence in $\mathrm{Mfld}_1^c$. Hence, the functor $\mathrm{W} : \cat{E} \to \mathrm{Mfld}_1^c$ canonically extends to a functor of $\infty$-categories
\[
	(\OrbT)_{h\N^\times} \simeq \cat{E}[\mathrm{cocart}^{-1}] \to \mathrm{Mfld}_1^c.
\]
This functor is fully faithful, so $(\OrbT)_{h\N^\times}$ is equivalent to the $\infty$-category with a single object $\T/C_1$, and $\mathrm{End}_{(\OrbT)_{h\N^\times}}(\T/C_1) \simeq \mathrm{End}_{\mathrm{Mfld}_1^c}(S^1) \simeq \W$. This proves the desired statement. 
\end{proof}

We define the $\infty$-category of topological Cartier modules as follows:

\begin{definition} \label{definition:TCart}
The $\infty$-category of $\cat{D}$-valued topological Cartier modules is defined by
\[
	\TCart_{\cat{D}} = \Mack_{\cat{D}}(\B\W) = \Fun^{\times}(\Span(\Fin_{\B\W}), \cat{D}),
\]
where $\cat{D}$ denotes an additive $\infty$-category. If $\cat{D} = \Sp$, then we will write $\TCart$ instead of $\TCart_{\Sp}$, and refer to the former as the $\infty$-category of topological Cartier modules. 
\end{definition}

\begin{remark} \label{remark:unwind_tcart}
The functor $\B\W^{\op} \to \Span(\Fin_{\B\W})$ induces a functor of $\infty$-categories
\[
	\TCart \to \Sp^{\Fr}
\]
which regards the underlying spectrum of a topological Cartier module as a spectrum with Frobenius lifts, where the underlying spectrum of a topological Cartier module is defined as the spectrum with $\T$-action obtained by precomposing the functor above with the forgetful functor $\Sp^{\Fr} \to \Sp^{\B\T}$. The functor $\B\W \to \Span(\Fin_{\B\W})$ induces a functor of $\infty$-categories
\[
	\TCart \to \Fun(\B\W, \Sp),
\]
so as a consequence, if $M$ is a topological Cartier module, then for each integer $k \geq 1$, there is a $\T$-equivariant map of spectra $V_k : M_{hC_k} \to M$, which we will refer to as the $k$th Verschiebung map of $M$.
\end{remark}

\begin{remark}
Antieau--Nikolaus~\cite[Definition 3.1]{AN20} define a $p$-typical topological Cartier module as a spectrum $M$ with $\T$-action equipped with the datum of a $\T$-equivariant factorization
\[
	M_{hC_p} \xrightarrow{V} M \xrightarrow{F} M^{hC_p}
\]
of the norm map for the cyclic group $C_p$. In Remark~\ref{remark:isotropy}, we make the discussion in Remark~\ref{remark:unwind_tcart} precise by showing that the underlying spectrum of a topological Cartier module in the sense of Definition~\ref{definition:TCart} canonically admits the structure of a $p$-typical topological Cartier module for every prime number $p$.  
\end{remark}

\begin{example}
If $M$ denotes a topological Cartier module, then the abelian group $\pi_0 M$ comes equipped with a pair of endomorphism $V_k = i'(k)$ and $F_k = i(k)$ for every positive integer $k$, where $i'$ and $i$ denote the functors defined in Construction~\ref{construction:maps_into_span}. Let $g = \mathrm{gcd}(m, n)$ denote the greatest common divisor of a pair of positive integers $m$ and $n$. Then the commutative diagram
\[
\begin{tikzcd}[sep = large]
	\ast \amalg \cdots \amalg \ast \arrow{r}{\frac{n}{g} \circ (g \cdot \id)} \arrow[swap]{d}{\frac{m}{g} \circ (g \cdot \id)} & \ast \arrow{d}{m} \\
	\ast \arrow{r}{n} & \ast 
\end{tikzcd}
\]
is a pullback in the finite coproduct completion $\Fin_{\B\W}$, so we conclude that $F_m V_n = \mathrm{gcd}(m, n) V_{\frac{n}{g}} F_{\frac{m}{g}}$. In particular, we have that $F_k V_k = k \cdot \id$ for every positive integer $k$, and if $\mathrm{gcd}(m, n) = 1$, then $F_m V_n = V_n F_m$. We will refer to this as a Cartier module structure on $\pi_0 M$ (see Zink~\cite{Zink84}). This structure arises frequently in algebra; for instance if $R$ is a commutative ring, then the ring of big Witt vectors $W(R)$ and the ring of rational Witt vectors $W_{\mathrm{rat}}(R)$ both admit the structure of a Cartier module in this sense.
\end{example}

We show that the $\infty$-category of topological Cartier modules is equivalent to the $\infty$-category of genuine topological Cartier modules as defined by Antieau--Nikolaus~\cite[Definition 5.1]{AN20}.

\begin{definition} \label{definition:Tcart_gen}
The $\infty$-category of genuine topological Cartier modules is defined by
\[
	\TCart^{\gen} = \lim(F_{\Psi} : \B\N^{\times} \to \Cat_{\infty}) = (\Sp_{\T})^{h\N^\times},
\]
where the multiplicative monoid $\N^{\times}$ acts on the $\infty$-category $\Sp_{\T}$ as in Lemma~\ref{lemma:N_action_SpT}. 
\end{definition}

\begin{remark}
Unwinding the definition, we see that an object of $\TCart^{\gen}$ is given by a genuine $\T$-spectrum $M$ with compatible equivalences of genuine $\T$-spectra $M \simeq M^{C_k}$ for every $k \geq 1$. 
\end{remark}

\begin{proposition} \label{proposition:TCart_equiv_TCart_gen}
There is an equivalence of $\infty$-categories $\TCart \simeq \TCart^{\gen}$. 
\end{proposition}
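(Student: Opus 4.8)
The plan is to identify both $\infty$-categories with spectral Mackey functors on a common orbital $\infty$-category and then invoke the equivalence $\B\W \simeq (\OrbT)_{h\N^\times}$ from Lemma~\ref{lemma:BW_orbital}. More precisely, the strategy rests on the general principle that for a disjunctive $\infty$-category equipped with an action of a monoid, spectral Mackey functors on the homotopy quotient coincide with homotopy-fixed points of the induced action on spectral Mackey functors. First I would observe that the action of $\N^\times$ on $\OrbT$ from Construction~\ref{construction:actionN_OrbT} induces, via $T \mapsto \Mack(T)$ together with the adjunctions $(\Fin(i_k), p_k)$ from Lemma~\ref{lemma:N_action_SpT}, an action of $\N^\times$ on $\Sp_\T = \Mack(\OrbT)$ whose fixed points are $\TCart^{\gen}$ essentially by Definition~\ref{definition:Tcart_gen}; here one must check that the action used in Definition~\ref{definition:Tcart_gen} (via $F_\Psi$, i.e. the fixed-point functors $X \mapsto X^{C_k}$) is the same as the one arising from functoriality of $\Mack(-)$ applied to $i_k$, which is exactly the content of Barwick--Glasman~\cite{BG16}.

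The second and main step is to prove that $\Mack$ commutes with the relevant homotopy quotient, i.e. that there is an equivalence
\[
	\Mack\bigl((\OrbT)_{h\N^\times}\bigr) \simeq \Mack(\OrbT)^{h\N^\times}.
\]
I would deduce this from the fact that $\Span(\Fin_{-})$ sends the colimit $\colim_{\B\N^\times} \OrbT$ to the corresponding colimit of $\infty$-categories, combined with the universal property of $\Span$ and the semiadditivity that makes $\Fun^\times(\Span(\Fin_T), \Sp)$ behave well under (co)limits in $T$. Concretely: the functor $\cat{E} \to \B\N^\times$ built in the proof of Lemma~\ref{lemma:BW_orbital} is a cocartesian fibration classifying the $\N^\times$-action, so $(\OrbT)_{h\N^\times}$ is its localization at cocartesian edges; one then uses that forming spans and then product-preserving functors into $\Sp$ turns the lax colimit into a genuine limit, essentially the standard "Mackey functors out of a Grothendieck construction are sections" argument. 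Finally, applying Lemma~\ref{lemma:BW_orbital} to rewrite the left-hand side as $\Mack(\B\W) = \TCart$ and the above identification to rewrite the right-hand side as $(\Sp_\T)^{h\N^\times} = \TCart^{\gen}$ yields the claim.

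The step I expect to be the main obstacle is the commutation of $\Mack$ with the homotopy quotient, and in particular verifying that the two $\N^\times$-actions on $\Sp_\T$ — the one induced abstractly by functoriality of $T \mapsto \Mack(T)$ along $i_k$, and the one by the fixed-point functors $(-)^{C_k}$ used in Definition~\ref{definition:Tcart_gen} — genuinely agree at the level of coherent diagrams $\B\N^\times \to \Cat_\infty$, not merely objectwise. This is where one needs the precise compatibilities recorded in Barwick--Glasman~\cite[Lemma 3.2, Lemma 3.10, Lemma 3.11]{BG16} relating $\Fin(i_k)$, its adjoint $p_k$, and the restriction/fixed-point functors on $\Sp_\T$; once these are in hand the rest is formal manipulation with (co)cartesian fibrations and the universal property of spans.
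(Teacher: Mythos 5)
Your proposal matches the paper's proof: both chain the equivalences $\TCart = \Mack(\B\W) \simeq \Mack((\OrbT)_{h\N^\times}) \simeq \Mack(\OrbT)^{h\N^\times} = \TCart^{\gen}$, using Lemma~\ref{lemma:BW_orbital} for the first identification and Lemma~\ref{lemma:N_action_SpT} for the second. The paper simply cites these two lemmas, and the concerns you correctly flag as the nontrivial content — commutation of $\Mack$ with the homotopy quotient, and the agreement of the two $\N^\times$-actions on $\Sp_\T$ — are exactly what Lemma~\ref{lemma:N_action_SpT} (Barwick--Glasman's Lemma 3.2) is invoked to supply.
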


\begin{proof}
There is a sequence of equivalences of $\infty$-categories
\[
	\TCart = \Mack(\B\W) \xrightarrow{\simeq} \Mack((\OrbT)_{h\N^{\times}}) \simeq \Mack(\OrbT)^{h\N^{\times}} = \TCart^{\gen}
\]
by virtue of Lemma~\ref{lemma:BW_orbital} and Lemma~\ref{lemma:N_action_SpT} which shows the wanted.  
\end{proof}

Antieau--Nikolaus prove that there is an equivalence $\TCart_p \simeq \TCart^{\gen}_p$\cite[Proposition 5.5]{AN20}, and Proposition~\ref{proposition:TCart_equiv_TCart_gen} provides an integral version of this result. Note that these comparison results hold unconditionally in contrast to the comparison between the $\infty$-category of cyclotomic spectra and the $\infty$-category of genuine cyclotomic spectra established by Nikolaus--Scholze~\cite{NS18}, where it is crucial to restrict to the bounded below case.

\begin{remark} \label{remark:isotropy}
There is a canonical sequence of functors of $\infty$-categories
\[
	\TCart \to \Sp_{\T} \to \cat{P}_{\Sp}(\OrbT) \to \Sp^{\B\T},
\]
where the first functor exists by virtue of Proposition~\ref{proposition:TCart_equiv_TCart_gen}. In particular, if $M$ is a topological Cartier module, then there is a commutative diagram of spectra with $\T$-action
\[
\begin{tikzcd}
	M_{hC_k} \arrow{r} \arrow{d}{\simeq} & M \arrow{r} \arrow{d} & M^{\Phi C_k} \arrow{d} \\
	M_{hC_k} \arrow{r}{\Nm_{C_k}} & M^{hC_k} \arrow{r}{\can} & M^{tC_k}
\end{tikzcd}
\]
for every positive integer $k$, where we have used that $M^{C_k} \simeq M$ in $\Sp_{\T}$. This diagram was introduced by Hesselholt--Madsen~\cite{HM97} building on work of Greenlees--May~\cite{GM95}, and is commonly referred to as the isotropy-separation diagram. Note that the commutative square on the left in the diagram above exhibits the underlying spectrum with $\T$-action of $M$ as a $p$-typical topological Cartier module for every prime number $p$ in the sense of Antieau--Nikolaus~\cite{AN20}. 
\end{remark}

\begin{remark}
A detailed treatment of the $\infty$-category of topological Cartier modules extending the results of Antieau--Nikolaus~\cite{AN20} to the integral situation will be the content of forthcoming work. 
\end{remark}

We obtain an explicit identification of the topological Cartier module obtained from a spectrum with Frobenius lifts by freely adjoining Verschiebung maps. It follows formally from~\cite[Proposition 6.5]{Bar17}, that there is an adjunction of $\infty$-categories
\begin{equation} \label{equation:free_adjunction}
\begin{tikzcd}
	\Sp^{\Fr} \arrow[yshift=0.7ex]{r}{\Free} & \TCart \arrow[yshift=-0.7ex]{l}
\end{tikzcd}
\end{equation}
where $\Free$ denotes a left adjoint of the forgetful functor $\TCart \to \Sp^{\Fr}$. Antieau--Nikolaus~\cite{AN20} give a direct construction of the $p$-typical analogue of $\Free$, but this is not a viable approach in the integral situation due to the infinite hierarchy of coherences that need to be specified to define such a functor. We will resolve this issue by establishing a general version of the Segal--tom Dieck splitting for spectral Mackey functors on orbital $\infty$-categories. We will begin by recalling the usual Segal--tom Dieck splitting~\cite{Die75,LMS86} in equivariant stable homotopy theory (see~\cite{Sch20} for a classical treatment). For a finite group $G$, there is an adjunction
\begin{equation} \label{equation:tomDieck_adjunction}
\begin{tikzcd}
	\cat{P}_{\Sp}(\Orb_G) \arrow[yshift=0.7ex]{r}{F} & \Sp_G \arrow[yshift=-0.7ex]{l}
\end{tikzcd}
\end{equation}
where the right adjoint is obtained by restricting along the functor $\Orb_G^{\op} \to \Span(\Fin_G)$, and the left adjoint $F$ is given by sending a spectral presheaf $X$ on $\Orb_G$ to the genuine $G$-spectrum with
\[
	F(X)^G \simeq \bigoplus_{(H)} (X^H)_{h\mathrm{W}_G(H)},
\]
where the sum is indexed by conjugacy classes of subgroups of $G$, and $\mathrm{W}_G(H)$ is the Weyl group. In general, if $Y$ is a spectral presheaf on $\Orb_G$, then we define $Y^H = Y(G/H)$ for every subgroup $H$ of $G$. The idea is to regard the adjunction in~(\ref{equation:free_adjunction}) as an instance of this adjunction, where we replace the role of the orbit category of $G$ above by the Witt monoid. Let $T$ denote an orbital $\infty$-category, and note that there is an adjunction of $\infty$-categories
\[
\begin{tikzcd}
	\Fun(T^{\op}, \Sp) \arrow[yshift=0.7ex]{r}{L} & \Mack(T) = \Fun^{\times}(\Span(\Fin_T), \Sp), \arrow[yshift=-0.7ex]{l}
\end{tikzcd}
\]
where the right adjoint is the forgetful functor. In Proposition~\ref{proposition:tomDieck_orbital} below, we obtain an explicit formula for the left adjoint $L$ above which we regard as a version of the Segal--tom Dieck splitting for spectral Mackey functors defined on the orbital $\infty$-category $T$. In~\cite[Theorem A.9]{Bar17}, Barwick proves a similar result establishing a version of the Segal--tom Dieck splitting for coherent topoi.

\begin{proposition} \label{proposition:tomDieck_orbital}
If $X$ is a presheaf on an orbital $\infty$-category $T$ with values in the $\infty$-category of spectra, then there is a natural equivalence of spectra
\[
	(LX)(t) \simeq \colim_{(t' \to t) \in (T_{/t})^{\simeq}} X(t')
\]
for every object $t$ of $T$. 
\end{proposition}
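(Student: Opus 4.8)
The plan is to identify the left adjoint $L$ explicitly by recognizing it as a left Kan extension and then computing that Kan extension pointwise. First I would observe that the forgetful functor $\Mack(T) \to \Fun(T^{\op}, \Sp)$ is given by restriction along the functor $i \colon T^{\op} \to \Span(\Fin_T)$ from Construction~\ref{construction:maps_into_span}, composed with the inclusion $\Mack(T) \hookrightarrow \Fun(\Span(\Fin_T), \Sp)$. Since $\Mack(T) = \Fun^{\times}(\Span(\Fin_T), \Sp)$ is a reflective subcategory of $\Fun(\Span(\Fin_T), \Sp)$ (with localization given by the composite of left Kan extension along $i$ and then the product-preserving reflection), and since $\Span(\Fin_T)$ is semiadditive with biproduct given by coproduct in $\Fin_T$, a left Kan extension along the fully faithful $i$ of a functor defined on $T^{\op}$ automatically preserves products — equivalently, the span category is freely generated under the relevant colimits — so the reflection is unnecessary. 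Thus $L$ is just left Kan extension along $i \colon T^{\op} \to \Span(\Fin_T)$, restricted to presheaves on $T$ (which extend uniquely to $\Fin_T^{\op}$ by additivity, cf. Remark~\ref{remark:universal_property_Fin}).

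Next I would compute $(LX)(t)$ using the pointwise formula for a left Kan extension: $(LX)(t) \simeq \colim_{(i(t') \to t) \in (T^{\op})_{/t}^{\,i}} X(t')$, where the indexing $\infty$-category is the comma category $i \downarrow t$, whose objects are pairs $(t', \sigma)$ with $\sigma \colon i(t') \to t$ a morphism in $\Span(\Fin_T)$. By definition of $\Span(\Fin_T)$, such a $\sigma$ is a span $t' \leftarrow Y \rightarrow t$ in $\Fin_T$ with left leg equal to $\mathrm{id}_{t'}$ — wait, more precisely $i(t') \to t$ unwinds to an arbitrary span, but the functor $i$ sends a map $t' \to t'$... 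Let me re-read: $i$ sends $(X \to X')$ to $(X' \leftarrow X \xrightarrow{\mathrm{id}} X)$, so $i$ is contravariant and the morphisms $i(t') \to s$ in the comma category are spans $t' \leftarrow \bullet \to \dots$; the key point is that the comma category $i \downarrow t$ is equivalent to $(\Fin_T)_{/t}$ with objects $Y \to t$, where $Y$ need not be a single orbit. I would then use that $X$, being additive on $\Fin_T^{\op}$, turns coproducts into products, but the \emph{colimit} is over the whole slice; the crucial reduction is that in the slice $(\Fin_T)_{/t}$ the objects of the form $t' \to t$ with $t'$ an orbit are \emph{cofinal} — because any $Y \to t$ with $Y = \coprod t'_i$ maps compatibly from its components $t'_i \to t$, and the colimit over the slice computed on an additive-valued $X$ collapses the coproduct summands. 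Concretely, the inclusion $(T_{/t})^{\simeq} \hookrightarrow i\downarrow t$ (sending $t' \to t$ to the span $t' \leftarrow t' \to t$) is cofinal; I would verify this via Quillen's Theorem A, checking that for each object of $i \downarrow t$ the relevant comma category is weakly contractible, which reduces to the fact that $\Fin_T$ is the free finite-coproduct completion and every object canonically decomposes into orbits.

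The main obstacle I anticipate is pinning down exactly why the colimit is over the \emph{maximal subgroupoid} $(T_{/t})^{\simeq}$ rather than the full slice $T_{/t}$. This is the subtle point: a general morphism $t' \to t$ in $T$ need not be invertible, and one must argue that in the span category only the underlying equivalences survive the left Kan extension. The reason is that a span $t' \leftarrow Y \to t$ in $\Fin_T$ records a map $Y \to t'$ and a map $Y \to t$, and the comma category $i \downarrow t$ has as morphisms $(t'_1, \sigma_1) \to (t'_2, \sigma_2)$ given by maps $t'_1 \to t'_2$ in $T^{\op}$ compatible with the spans — but composing with $i$ means these are spans too, so the morphism data in $i\downarrow t$ is again span-shaped, and tracing through shows the truncation to orbits-with-invertible-structure-maps is what remains. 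I would make this precise by using the explicit description of $\Span(\Fin_T)$ as a complete Segal space (cf.~\cite[Proposition 3.4]{Bar17}) together with the standard fact that for a disjunctive $\infty$-category the span construction's mapping spaces decompose as $\Map_{\Span(\Fin_T)}(X, Y) \simeq \coprod \prod \Map_T(\dots)^{\simeq}$ over decompositions — so the Kan extension colimit naturally indexes over groupoids. Once this cofinality is established, the formula $(LX)(t) \simeq \colim_{(t' \to t) \in (T_{/t})^{\simeq}} X(t')$ drops out immediately, and specializing $T = \B\W$ (where $T_{/\ast}^{\simeq}$ has one object $\ast$ with automorphisms $\W^{\times} = \T$, together with the orbit maps) recovers the free topological Cartier module formula as an integral Segal--tom Dieck splitting.
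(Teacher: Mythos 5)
Your proposal rests on two claims that are both false, and together they constitute a genuine gap.

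First, the central claim — that a left Kan extension $j_!$ along $i\colon T^{\op}\to\Span(\Fin_T)$ ``automatically preserves products,'' so the reflection $p$ is unnecessary and $L$ is just $j_!$ — does not hold. Take $T=*$, so $\Fin_T=\Fin$, $\Span(\Fin_T)=\Span(\Fin)$, and $X=\S$. The pointwise Kan extension formula gives
\[
	(j_!\S)(\underline{n}) \simeq \colim_{\Map_{\Span(\Fin)}(\underline{1},\,\underline{n})}\S \simeq \Sigma^\infty_+\bigl(\Fin^{\simeq}\bigr)^{n}, \qquad \Fin^{\simeq}=\coprod_{k\geq 0}B\Sigma_k.
\]
Additivity would force $\Sigma^\infty_+\Fin^{\simeq}\otimes\Sigma^\infty_+\Fin^{\simeq}\simeq\Sigma^\infty_+\Fin^{\simeq}\oplus\Sigma^\infty_+\Fin^{\simeq}$, which is false. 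Worse for your plan, the reflection changes the value even on orbits: $(j_!\S)(\underline{1})=\bigoplus_{k}\Sigma^\infty_+ B\Sigma_k$, whereas $(L\S)(\underline{1})=\S$ (the free-forgetful adjunction for the trivial group is an equivalence). So you cannot read off $(LX)(t)$ from $(j_!X)(t)$, and the formula you want is a statement about $LX=pj_!X$, not $j_!X$.

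The fallback cofinality claim fails by the same example: the inclusion $(T_{/t})^{\simeq}\hookrightarrow i\downarrow t$ is then the map from a point into $\Fin^{\simeq}$, which is far from a weak equivalence of $\infty$-groupoids and hence not cofinal. Conceptually, the colimit indexing the Kan extension genuinely sees the spans $t'\leftarrow Y\to t$ with $Y$ a nontrivial coproduct of orbits, and these are exactly what the reflection $p$ has to compress. They cannot be eliminated by cofinality. The paper's proof navigates this in an essentially different way: it builds a natural comparison map out of the single-orbit colimit using both the pointwise Kan extension formula \emph{and} the unit $\id\to\iota p$ of the reflection, checks both sides preserve colimits in $X$ and are compatible with stabilization, and then reduces to representable presheaves at the level of $\E_\infty$-monoids in spaces, where $LX$ is identified via Yoneda with $\Map_{\Span(\Fin_T)}(s,-)^{\mathrm{grp}}$. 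It is precisely the passage through $\Free_{\E_\infty}$ and group completion that absorbs the multi-orbit spans and lets the comparison be an equivalence. Your argument has no mechanism to perform this bookkeeping, and the counterexample shows this is not a gap that could be repaired by a sharper cofinality lemma.
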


\begin{proof}
Consider the following pair of adjunctions of $\infty$-categories
\[
\begin{tikzcd}
	\Fun(T^{\op}, \Sp) \arrow[yshift=0.7ex]{r}{j_!} & \Fun(\Span(\Fin_T), \Sp) \arrow[yshift=0.7ex]{r}{p} \arrow[yshift=-0.7ex]{l}{j^\ast} & \Fun^{\times}(\Span(\Fin_T), \Sp), \arrow[yshift=-0.7ex, hook]{l}{\iota}
\end{tikzcd}
\]
where $j_!$ denotes the functor given by forming the left Kan extension along $j : T^{\op} \to \Span(\Fin_T)$, and $p$ denotes a left adjoint of the inclusion $\iota$ which exists by virtue of~\cite[Proposition 6.5]{Bar17}). Let $u = j^\ast \iota$ denote the right adjoint of $L \simeq p j_!$. Let $X : T^{\op} \to \Sp$ denote a presheaf on $T$ with values in the $\infty$-category of spectra. If $t$ is an object of $T$, then there is a natural equivalence of spectra
\[
	(j^\ast j_! X)(t) \simeq \colim(\Span(\Fin_T)_{/t} \times_{\Span(\Fin_T)} T^{\op} \to T^{\op} \xrightarrow{X} \Sp),
\]
by virtue of the pointwise formula for the left Kan extension. Consequently, the canonical functor $T \to \Span(\Fin_T)$ induces a natural map $\colim_{(t' \to t) \in (T_{/t})^{\simeq}} X(t') \to (j^\ast j_! X)(t)$. The unit $\id \to \iota p$ induces a natural transformation $j^\ast j_! X \to j^\ast \iota p j_! X \simeq uLX$, and hence a natural map of spectra
\[
	\colim_{(t' \to t) \in (T_{/t})^{\simeq}} X(t') \to (j^\ast j_! X)(t) \to (uLX)(t)
\]
for every object $t$ of $T$. Additionally, this map is natural in $X$. Consequently, it suffices to prove the assertion of Proposition~\ref{proposition:tomDieck_orbital} for a presheaf on an orbital $\infty$-category $T$ with values in the $\infty$-category of connective spectra since the general case follows by passing to stabilizations. The forgetful functor $\Fun^\times(\Span(\Fin_T), \Spaces) \to \Fun(T^{\op}, \Spaces)$ admits a left adjoint which is canonically equivalent to the following composite of left adjoint functors of $\infty$-categories
\[
	\Fun(T^{\op}, \Spaces) \xrightarrow{\Free_{\E_\infty}} \Fun(T^{\op}, \Mon_{\E_\infty}(\Spaces)) \xrightarrow{L'} \Fun^{\times}(\Span(\Fin_T), \Mon_{\E_\infty}(\Spaces)),
\]
where we have used that the forgetful functor $\Mon_{\E_\infty}(\Spaces) \to \Spaces$ induces an equivalence
\[
	\Fun^{\times}(\Span(\Fin_T), \Mon_{\E_\infty}(\Spaces)) \simeq \Fun^{\times}(\Span(\Fin_T), \Spaces)
\]
since the $\infty$-category $\Span(\Fin_T)$ is semiadditive (cf.~\cite[Corollary 2.5]{GGN16}). Let $\Grp_{\E_\infty}(\Spaces)$ denote the full subcategory of $\Mon_{\E_\infty}(\Spaces)$ spanned by the grouplike $\E_\infty$-monoids, and recall that there is an equivalence $\Grp_{\E_\infty}(\Spaces) \simeq \Sp_{\geq 0}$ by~\cite{May06}. The following diagram of left adjoints commutes
\[
\begin{tikzcd}
	\Fun(T^{\op}, \Mon_{\E_\infty}(\Spaces)) \arrow{r}{L'} \arrow{d}{(-)^{\mathrm{grp}}} & \Fun^{\times}(\Span(\Fin_T), \Mon_{\E_\infty}(\Spaces)) \arrow{d}{(-)^{\mathrm{grp}}} \\
	\Fun(T^{\op}, \Grp_{\E_\infty}(\Spaces)) \arrow{r}{L} & \Fun^{\times}(\Span(\Fin_T), \Grp_{\E_\infty}(\Spaces))
\end{tikzcd}
\]
since the corresponding diagram of right adjoints commutes. The functors denoted by $(-)^{\mathrm{grp}}$ in the diagram above are induced by the group completion functor $\Mon_{\E_\infty}(\Spaces) \to \Grp_{\E_\infty}(\Spaces)$ which preserves finite products. If $X : T^{\op} \to \Sp_{\geq 0}$ is a presheaf on $T$ with values in the $\infty$-category of connective spectra, then we want to show that the map of connective spectra
\[
	\colim_{(t' \to t) \in (T_{/t})^{\simeq}} X(t') \to (LX)(t)
\]
is an equivalence for every object $t$ of $T$. Note that $X \simeq \colim_n \Sigma^{\infty-n}\Omega^{\infty-n} X$, and each presheaf $\Omega^{\infty-n} X$ is canonically a colimit of representable presheaves of the form $\Map_T(-, s)$, so we may assume that $X = \Sigma^\infty_+ \Map_T(-, s)$ for an object $s$ of $T$ since both the domain and codomain of the map above preserve colimits in $X$. Equivalently, we may assume that $X$ is given by the presheaf of grouplike $\E_\infty$-monoids on $T$ given by the formula $X = (\Free_{\E_\infty} \Map_T(-, s))^{\mathrm{grp}}$. There is a natural equivalence of functors of $\infty$-categories
\[
	LX \simeq (L' \Map_T(-, s))^{\mathrm{grp}} \simeq \Map_{\Span(\Fin_T)}(s, -)^{\mathrm{grp}} : \Span(\Fin_T) \to \Grp_{\E_\infty}(\Spaces)
\]
by the commutative square above, where the last equivalence follows by the Yoneda lemma. In this case, the map under consideration
\[
	F : \colim_{(t' \to t) \in (T_{/t})^{\simeq}} \Map_T(t', s) \to \Map_{\Span(\Fin)_T}(s, t).
\]
is induced by the map of spaces
\[
	\Map_T(t', s) \to \Map_{\Span(\Fin_T)}(s, t),
\]
determined by the construction $(t' \to s) \mapsto (s \leftarrow t' \rightarrow t)$ for every object $t' \to t$ of $(T_{/t})^{\simeq}$. Note that the map $F$ above is an equivalence, since the map of spaces
\[
	\Map_{\Span(\Fin_T)}(s, t) \to \colim_{(t' \to t) \in (T_{/t})^{\simeq}} \Map_T(t', s)
\]
determined by the construction $(s \leftarrow t' \to t) \mapsto (t' \to s)$ is an inverse. Using that $\Map_{\Span(\Fin_T)}(s, t)$ admits the structure of an $\E_\infty$-monoid since $\Span(\Fin_T)$ is semiadditive, and that both $\Free_{\E_\infty}$ and $(-)^{\mathrm{grp}}$ preserve colimits, we conclude that the map of grouplike $\E_\infty$-monoids
\[
	\colim_{(t' \to t) \in (T_{/t})^{\simeq}} (\Free_{\E_\infty}\Map_T(t', s))^{\mathrm{grp}} \to \Map_{\Span(\Fin_T)}(s, t)^{\mathrm{grp}}
\]
is an equivalence which proves the desired statement.
\end{proof}

\begin{example}
Let $G$ be a finite group, and consider the adjunction in~(\ref{equation:tomDieck_adjunction}). If $X : \Orb_G^{\op} \to \Sp$ is a spectral presheaf on the orbit category of $G$, then there is an equivalence
\[
	F(X)^G \simeq \colim_{G/H \to G/G \in ((\Orb_G)_{/(G/G)})^{\simeq}} X(G/H) \simeq \bigoplus_{(H)} (X^H)_{hW_G(H)},
\]
by Proposition~\ref{proposition:tomDieck_orbital}. This is precisely the classical Segal--tom Dieck splitting as discussed above. 
\end{example}

Recall that there is an adjunction of $\infty$-categories
\[
\begin{tikzcd}
	\Sp^{\Fr} \arrow[yshift=0.7ex]{r}{\Free} & \TCart \arrow[yshift=-0.7ex]{l}
\end{tikzcd}
\]
where $\mathrm{Free}$ denotes a left adjoint of the forgetful functor. As a consequence of Proposition~\ref{proposition:tomDieck_orbital}, we obtain an explicit description of $\Free$ which we will use in the proof of Theorem~\ref{theorem:comparison_TR}, where we show that the classical definition of $\TR$ coincides with the definition considered in \textsection\ref{subsection:TR} in the bounded below case. 

\begin{corollary} \label{corollary:explicit_Free}
If $X$ is a spectrum with Frobenius lifts, then there is an equivalence of spectra with $\T$-action
\[
	\Free(X) \simeq \bigoplus_{n \geq 1} X_{hC_n},
\]
The $k$th Verschiebung map of $\Free(X)$ is given by the canonical inclusion
\[
	\Free(X)_{hC_k} \simeq \bigoplus_{n \geq 1} X_{hC_{kn}} \hookrightarrow \bigoplus_{n \geq 1} X_{hC_n} \simeq \Free(X)
\]
for every integer $k \geq 1$, where the Verschiebung maps of $X$ are defined in Remark~\ref{remark:unwind_tcart}. 
\end{corollary}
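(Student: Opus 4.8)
The plan is to apply Proposition~\ref{proposition:tomDieck_orbital} directly to the orbital $\infty$-category $T = \B\W$. By Lemma~\ref{lemma:BW_orbital}, $\B\W$ has a single object, which we denote $\ast$, and $\mathrm{End}_{\B\W}(\ast) \simeq \W$. The adjunction from~(\ref{equation:free_adjunction}) identifies $\Free$ with the left adjoint $L$ to the forgetful functor $\Mack(\B\W) \to \Fun(\B\W^{\op}, \Sp) = \Sp^{\Fr}$, so Proposition~\ref{proposition:tomDieck_orbital} gives
\[
	\Free(X)(\ast) \simeq \colim_{(\ast \to \ast) \in (\B\W_{/\ast})^{\simeq}} X(\ast)
\]
as a spectrum, and the task reduces to (a) computing the indexing groupoid $(\B\W_{/\ast})^{\simeq}$, and (b) identifying the residual $\T$-action on the colimit together with the Verschiebung maps.

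For step (a): the slice $\B\W_{/\ast}$ has objects the elements of $\W$, i.e. pairs $(\lambda, k)$ with $\lambda \in \T$ and $k \in \N^\times$, and a morphism $(\lambda, k) \to (\mu, l)$ is an element $w \in \W$ with $(\mu, l) \cdot w = (\lambda, k)$ in the appropriate sense — more precisely, since $\B\W$ is an $\infty$-category and not a $1$-category, $\B\W_{/\ast}$ is the action groupoid, and its underlying maximal subgroupoid $(\B\W_{/\ast})^{\simeq}$ is the groupoid of right cosets for the group $\T \le \W$ acting by right multiplication. Concretely, the elements of degree $k$ form a single $\T$-torsor under right multiplication by $(\T, 1)$, so $(\B\W_{/\ast})^{\simeq} \simeq \coprod_{k \ge 1} (\T/C_k)$, where the $k$th component is the $\T$-space $S^1/C_k$ appearing in Example~\ref{example:epibarconstruction_N}: indeed right multiplication by $(\lambda, 1)$ on degree-$k$ elements corresponds, after writing $(\mu, k) = (\sqrt[k]{\mu'}, k)$, to the residual rotation action of $\T \simeq \T/C_k$. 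Taking the colimit of the constant diagram with value (the underlying spectrum of) $X$ over this groupoid yields
\[
	\Free(X) \simeq \bigoplus_{k \ge 1} X_{hC_k},
\]
since the colimit over a groupoid $\coprod_k (\ast/\!\!/ C_k)$ of a constant diagram is $\bigoplus_k (\text{value})_{hC_k}$. This is the claimed formula on underlying spectra with $\T$-action; the $\T$-action is picked up precisely because the $\T\rtimes\N^\times$-structure on $\B\W$ makes the ambient residual circle act on each $X_{hC_k}$ via the residual $\T/C_k \simeq \T$, matching Construction~\ref{construction:actionN_OrbT}.

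For step (b), the Verschiebung maps: by Remark~\ref{remark:unwind_tcart}, $V_k$ is the image under $\Free(X) \in \TCart \to \Fun(\B\W, \Sp)$ of the morphism $(1,k) : \ast \to \ast$ read covariantly, equivalently the map obtained from the span $\ast \xleftarrow{\id} \ast \xrightarrow{(1,k)} \ast$ via the functor $i'$ of Construction~\ref{construction:maps_into_span}. Under the identification of the indexing groupoid above, postcomposition by $(1,k)$ sends a degree-$n$ coset to a degree-$kn$ coset, i.e. it is exactly the inclusion of the summands indexed by multiples of $k$; after taking homotopy orbits this becomes the stated canonical inclusion $\Free(X)_{hC_k} \simeq \bigoplus_{n \ge 1} X_{hC_{kn}} \hookrightarrow \bigoplus_{n \ge 1} X_{hC_n} \simeq \Free(X)$. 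Compatibility with the $\T$-action is automatic since the span lies over $\B\W$ and all functors in sight are $\T$-equivariant.

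The main obstacle is step (a): making the identification $(\B\W_{/\ast})^{\simeq} \simeq \coprod_{k\ge 1} S^1/C_k$ \emph{as $\T$-spaces} precise, rather than merely as abstract groupoids. The cleanest route is to invoke Lemma~\ref{lemma:BW_orbital} directly — since $\B\W \simeq (\OrbT)_{h\N^\times}$ and the slice $(\B\W)_{/\ast}$ unwinds through that equivalence to the data $(\OrbT)_{/(\T/C_1)}$ twisted by $\N^\times$, whose maximal subgroupoid is $\coprod_{k} \mathrm{Aut}_{\OrbT}(\T/C_k)\backslash\!\!\backslash (\ldots)$ — and then to reuse the explicit computation of $\mathrm{End}_{\mathrm{Mfld}_1^c}(S^1)$ and the $k$th-root reindexing already carried out in the proof of Lemma~\ref{lemma:BW_orbital}. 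This keeps the $\T$-equivariance bookkeeping consistent with Example~\ref{example:epibarconstruction_N}, where the identical $S^1/C_i$ indexing and the $\sqrt[k]{-}$ reparametrization already appear.
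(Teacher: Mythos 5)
Your step (a) — applying Proposition~\ref{proposition:tomDieck_orbital} to $T = \B\W$ — is the same route the paper takes, and your explicit computation of the indexing groupoid is a reasonable unwinding that the paper leaves implicit. Two small slips there: the degree-$k$ elements of $\W$ are not a $\T$-torsor under right multiplication by $(\T,1)$ (the action $\lambda \mapsto \lambda\nu^k$ is transitive but has stabilizer $C_k$, which is precisely why the answer involves $C_k$-orbits at all), and the diagram over $BC_k$ is not constant — it is $X$ equipped with its residual $C_k$-action, which is what makes the colimit $X_{hC_k}$ rather than $X \otimes \Sigma^\infty_+ BC_k$. Neither error affects the conclusion, but they should be tightened.

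Step (b) is where you genuinely diverge from the paper, and this is where the gap lies. You want to read off the Verschiebung $V_k = \Free(X)\bigl(\ast \xleftarrow{\id} \ast \xrightarrow{(1,k)} \ast\bigr)$ by observing that postcomposition with $(1,k)$ on the indexing groupoids multiplies degrees by $k$. But Proposition~\ref{proposition:tomDieck_orbital} only produces a pointwise equivalence $(LX)(t) \simeq \colim_{(T_{/t})^\simeq} X$, constructed via the contravariant inclusion $j : T^{\op} \to \Span(\Fin_T)$ and the unit $\id \to \iota p$; nothing in its statement or proof asserts that this equivalence is natural with respect to the \emph{covariant} (transfer/Verschiebung) morphisms of $\Span(\Fin_T)$. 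Saying the compatibility "is automatic since the span lies over $\B\W$" does not discharge this: one would need to check that the composite $\colim_{(T_{/t})^\simeq}X \to (j^\ast j_! X)(t) \to (u L X)(t)$ commutes with the maps induced by postcomposition on slices and by $L X$ applied to $i'$-spans, and that is genuinely extra work. The paper sidesteps it entirely: it identifies the $p$th Verschiebung by comparing $\Free$ with the $p$-typical left adjoint $\Free_p$ through the commuting square relating $\TCart \to \TCart_p$ and $\Sp^{\Fr} \to \Sp_p^{\Fr}$, imports the $p$-typical formula and its Verschiebung identification from Antieau--Nikolaus, and then gets the general $V_k$ by prime decomposition $V_{pq} = V_q \circ (V_p)_{hC_q}$. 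Your approach could in principle be made to work, but as written it asserts rather than proves the key naturality, whereas the paper's reduction to the $p$-typical case is complete as it stands.
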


\begin{proof}
The first assertion follows directly from Proposition~\ref{proposition:tomDieck_orbital}, so it remains to identify the Verschiebung maps of $\Free(X)$. We begin by identifying the $p$th Verschiebung map of $\Free(X)$ for a prime $p$. Let $\Free_p$ denote a left adjoint of the forgetful functor $\TCart_p \to \Sp_p^{\Fr}$. If $X$ is a $p$-typical spectrum with Frobenius lift, then there is an equivalence of spectra with $\T$-action
\[
	\Free_p(X) \simeq \bigoplus_{i \geq 0} X_{hC_{p^i}}
\]
by virtue of~\cite[Lemma 4.1]{AN20}, where the $p$th Verschiebung of $\Free_p(X)$ is given by the canonical inclusion under this identification. Since the following diagram commutes
\[
\begin{tikzcd}
	\TCart \arrow{r} \arrow{d} & \TCart_p \arrow{d} \\
	\Sp^{\Fr} \arrow{r} & \Sp_p^{\Fr}
\end{tikzcd}
\]
we obtain a canonical map of $p$-typical topological Cartier modules $\Free_p(X) \to \Free(X)$ for every spectrum $X$ with Frobenius lifts, such that the underlying map of spectra with $\T$-action is given by the canonical inclusion. This identifies the $p$th Verschiebung of $\Free(X)$ as wanted. The general case follows by prime decomposition. Indeed, if $k = p q$, then 
\[
	\Free(X)_{hC_{pq}} \simeq (\Free(X)_{hC_p})_{hC_q} \xrightarrow{(V_p)_{hC_q}} \Free(X)_{hC_q} \xrightarrow{V_q} \Free(X).
\]
\end{proof}

\begin{remark}
The functor $\Free$ restricts to a functor of $\infty$-categories
\[
	\Free : \Sp^{\Fr}_{\flat} \to \TCart_{\flat},
\]
where the decoration $(-)_{\flat}$ denotes the full subcategory spanned by those objects whose underlying spectrum is bounded below, since the functor $(-)_{hC_k}$ preserves connectivity for every integer $k \geq 1$. 
\end{remark}

\subsection{Genuine cyclotomic spectra and genuine TR} \label{subsection:genuine_cycsp}
We recall the construction of $\TR$ based on the notion of a genuine cyclotomic spectrum following Hesselholt--Madsen~\cite{HM97} and Blumberg--Mandell~\cite{BM16}. We establish an adjunction between the $\infty$-category of topological Cartier modules as introduced in \textsection\ref{subsection:tcart} and the $\infty$-category of genuine cyclotomic spectra whose right adjoint is given by $\TR$. Finally, we prove Theorem~\ref{theorem:comparison_TR} which asserts that the classical construction of $\TR$ agrees with the construction discussed in \textsection\ref{subsection:TR} in the bounded below case. 

\begin{definition} \label{definition:cycsp_gen}
The $\infty$-category of genuine cyclotomic spectra is defined by
\[
	\CycSp^{\gen} = \mathrm{lim}(F_{\Phi} : \B\N^{\times} \to \Cat_{\infty}) = (\Sp_\T)^{h\N^\times},
\]
where the multiplicative monoid $\N^{\times}$ acts on the $\infty$-category $\Sp_{\T}$ as in Example~\ref{example:geometric_fixedpoints}. 
\end{definition} 

\begin{remark}
Unwinding the definition, we see that an object of the $\infty$-category $\CycSp^{\gen}$ is given by a genuine $\T$-spectrum $X$ with compatible equivalences of genuine $\T$-spectra $X \simeq X^{\Phi C_k}$ for every integer $k \geq 1$. 
\end{remark}

The notion of a genuine cyclotomic spectrum was introduced by Hesselholt--Madsen~\cite{HM97} building on ideas introduced by Bökstedt--Hsiang--Madsen~\cite{BHM93}. The homotopy theory of genuine cyclotomic spectra was first studied by Blumberg--Mandell~\cite{BM16}, and later studied by Kaledin~\cite{Kal10,Kal13}, Barwick--Glasman~\cite{BG16}, Nikolaus--Scholze~\cite{NS18}, and Ayala--Mazel-Gee--Rozenblyum~\cite{AMGR17a,AMGR17b,AMGR17c}.

\begin{remark} \label{remark:main_result_NS}
There is a canonical sequence of functors of $\infty$-categories
\[
	\CycSp^{\gen} \to \Sp_{\T} \to \Sp^{\B\T}
\]
which extracts the underlying spectrum with $\T$-action of a genuine cyclotomic spectrum. Let $X$ denote a genuine cyclotomic spectrum. For every prime number $p$, the $\T$-equivariant map
\[
	X \to X^{\Phi C_p} \to X^{tC_p}
\]
exhibits the underlying spectrum with $\T$-action of $X$ as a cyclotomic spectrum in the sense of Nikolaus--Scholze~\cite{NS18}. The main result of Nikolaus--Scholze~\cite[Theorem II.6.9]{NS18} asserts that the construction above determines a functor of $\infty$-categories
\[
	\CycSp^{\gen} \to \CycSp
\]
which restricts to an equivalence on the full subcategories of those objects whose underlying spectrum is bounded below.
\end{remark}

\begin{remark}
If $R$ is a connective $\E_1$-ring, then Bökstedt--Hsiang--Madsen~\cite{BHM93} and Hesselholt--Madsen~\cite{HM97} prove that $\THH(R)$ admits the structure of a genuine cyclotomic spectrum by using the Bökstedt construction. Angeltveit--Blumberg--Gerhardt--Hill--Lawson--Mandell~\cite{ABGHLM14} construct the genuine cyclotomic structure on $\THH(R)$ using the Hill--Hopkins--Ravenel norm~\cite{HHR16}, and these constructions are equivalent by the work of Dotto--Malkiewich--Patchkoria--Sagave--Woo~\cite{DMPSW19}. Nikolaus--Scholze~\cite{NS18} construct a cyclotomic structure on $\THH(R)$ using the Tate-valued diagonal. 
\end{remark}

We recall the classical construction of $\TR$ following Blumberg--Mandell~\cite{BM16}.

\begin{construction} \label{construction:restriction_map}
Let $X$ be a genuine cyclotomic spectrum. For every pair of positive integer $(m, n)$ with $m = ln$, the restriction map $R_l : X^{C_m} \to X^{C_n}$ is the map of genuine $\T$-spectra given by
\[
	R_l : X^{C_m} \simeq (X^{C_l})^{C_n} \to (X^{\Phi C_l})^{C_n} \xrightarrow{\simeq} X^{C_n},
\]
where the final equivalence is induced by the genuine cyclotomic structure of $X$. Furthermore, the construction $n \mapsto X^{C_n}$ determines a functor of $\infty$-categories
\[
	X^{C_{(-)}} : (\N, \mathrm{div}) \to \Sp_{\T},
\]
where $(\N, \mathrm{div})$ denotes set of natural numbers regarded as a poset with respect to the divisibility relation, which means that there is a morphism $m \to n$ precisely if $n$ divides $m$. 
\end{construction}

\begin{definition}
Let $\TR^{\gen}$ denote the functor of $\infty$-categories
\[
	\TR^{\gen} : \CycSp^{\gen} \to \Sp_{\T}
\]
determined by the construction $X \mapsto \lim_{(\N, \mathrm{div})} X^{C_{(-)}}$. 
\end{definition}

\begin{remark} \label{remark:restrict_bounded_below}
The construction $X \mapsto \TR^{\gen}$ canonically refines to a functor of $\infty$-categories
\[
	\TR^{\gen} : \CycSp^{\gen} \to \TCart,
\]
where we have used Proposition~\ref{proposition:TCart_equiv_TCart_gen} and that the functor $(-)^{C_k}$ preserves limits for every $k \geq 1$. Additionally, the functor $\TR^{\gen} : \CycSp^{\gen} \to \TCart$ restricts to a functor
\[
	\TR^{\gen} : \CycSp_{\flat} \xrightarrow{\simeq} \CycSp^{\gen}_{\flat} \to \TCart_{\flat},
\]
where the decoration $(-)_{\flat}$ denotes the full subcategory spanned by those objects whose underlying spectrum is bounded below, and the first equivalence follows from Nikolaus--Scholze~\cite{NS18}.
\end{remark}

We recall an important construction due to Antieau--Nikolaus~\cite[Example 3.5]{AN20}. If $M$ is a $p$-typical topological Cartier module, then there is a cofiber sequence of spectra with $\T$-action
\[
	M_{hC_p} \xrightarrow{V} M \to M/V,
\]
and the cofiber $M/V$ canonically admits the structure of a $p$-typical cyclotomic spectrum. The construction $M \mapsto M/V$ determines a functor of $\infty$-categories
\[
	(-)/V : \TCart_p \to \CycSp_p,
\]
which admits a right adjoint given by $p$-typical $\TR$ by Antieau--Nikolaus~\cite[Proposition 3.17]{AN20} using Krause--Nikolaus~\cite[Proposition 10.3]{KN19}. We obtain an integral version of the functor $(-)/V$. Note that if we regard $M$ as a genuine $p$-typical topological Cartier module, then the underlying spectrum with $\T$-action of $M^{\Phi C_p}$ is equivalent to $M/V$ by the isotropy-separation sequence.

\begin{construction} \label{construction:the_other_restriction_map}
Let $M$ be a genuine topological Cartier module. For every pair of positive integers $(m, n)$ with $m = ln$, consider the map of genuine $\T$-spectra defined by{}
\[
	M^{\Phi C_n} \xrightarrow{\simeq} (M^{C_l})^{\Phi C_n} \to M^{\Phi C_m},
\]
where the first equivalence is induced by the genuine topological Cartier module structure of $M$. Furthermore, the construction $n \mapsto M^{\Phi C_n}$ determines a functor of $\infty$-categories
\[
	M^{\Phi C_{(-)}} : (\N, \mathrm{div})^{\op} \to \Sp_{\T}
\]
as in Construction~\ref{construction:restriction_map}. 
\end{construction}

\begin{remark}
The construction $M \mapsto \mathrm{colim}_{(\N, \mathrm{div})^{\op}} M^{\Phi C_{(-)}}$ refines to a functor of $\infty$-categories
\[
	\mathrm{L} : \TCart \to \CycSp^{\gen},
\]
where we have used Proposition~\ref{proposition:TCart_equiv_TCart_gen} and that the functor $(-)^{\Phi C_k}$ preserves colimits for every $k \geq 1$. Additionally, the functor $\mathrm{L} : \TCart \to \CycSp^{\gen}$ restricts to a functor of $\infty$-categories
\[
	\mathrm{L} : \TCart_{\flat} \to \CycSp^{\gen}_{\flat} \xrightarrow{\simeq} \CycSp_{\flat}
\]
as in Remark~\ref{remark:restrict_bounded_below}. 
\end{remark} 

\begin{remark} \label{remark:cofinality}
Let $(\N, \leq)$ denote the set of natural numbers regarded as a poset with respect to the usual ordering, which means that there is a morphism $m \to n$ in $(\N, \leq)$ precisely if $n \leq m$. The functor $(\N, \leq) \to (\N, \mathrm{div})$ determined by the construction $n \mapsto n!$ is initial\footnote{The author learned this observation from Markus Land.}. Consequently, if $X$ is a genuine cyclotomic spectum, then there is an equivalence of topological Cartier modules
\[
	\TR^{\gen}(X) \simeq \varprojlim(\cdots \to X^{C_{3!}} \to X^{C_{2!}} \to X),
\]
where the maps are given by $X^{C_{(n+1)!}} \simeq (X^{C_{n+1}})^{C_{n!}} \to (X^{\Phi C_{n+1}})^{C_{n!}} \simeq X^{C_{n!}}$. On the other hand, if $M$ is a topological Cartier module, then there is an equivalence of genuine cyclotomic spectra
\[
	\mathrm{L}(M) \simeq \varinjlim(M \to M^{\Phi C_{2!}} \to M^{\Phi C_{3!}} \to \cdots),
\]
where the maps are given by $M^{\Phi C_{n!}} \simeq (M^{C_{n+1}})^{\Phi C_{n!}} \to (M^{\Phi C_{n+1}})^{\Phi C_{n!}} \simeq M^{\Phi C_{(n+1)!}}$. 
\end{remark}

Using Remark~\ref{remark:cofinality}, we show that the functors $\mathrm{L}$ and $\TR^{\gen}$ form an adjunction. We adapt the proof given by Antieau--Nikolaus~\cite[Proposition 5.2]{AN20} to the integral situation. We begin by reviewing the $\infty$-category of coalgebras for a family of endofunctors following~\cite[Section II.5]{NS18}. If $F : \cat{C} \to \cat{C}$ is a functor, then the $\infty$-category $\coAlg_F(\cat{C})$ of $F$-coalgebras in $\cat{C}$ is defined by
\[
\begin{tikzcd}
	\coAlg_F(\cat{C}) = \mathrm{LEq} \big(
	\cat{C} \arrow[yshift=0.7ex]{r}{\id} \arrow[yshift=-0.7ex, swap]{r}{F} & \cat{C}
	\big),
\end{tikzcd}
\]
where $\mathrm{LEq}(-)$ denotes the lax equalizer (see~\cite[Definition II.1.4]{NS18}). Unwinding the definition, we see that an object of $\coAlg_F(\cat{C})$  consists of an object $X$ of $\cat{C}$ together with a morphism $X \to FX$. Let $\{F_k\}_{k \geq 1}$ be a family of commuting endofunctors of $\cat{C}$. There is a functor of $\infty$-categories
\[
	\coAlg_{F_1}(\cat{C}) \to \coAlg_{F_1}(\cat{C})
\]
determined by the assignment $X \mapsto F_2 X$, where $F_2 X$ is considered as an $F_1$-coalgebra using the map $F_2 X \to F_2 F_1 X \simeq F_1 F_2 X$ (cf.~\cite[Construction II.5.2]{NS18}). The $\infty$-category of coalgebras for the family of commuting endofunctor $\{F_k\}_{k \geq 1}$ of $\cat{C}$ is defined by
\[
	\coAlg_{\{F_k\}}(\cat{C}) = \varprojlim_n \coAlg_{\{F_1, \ldots, F_n\}}(\cat{C}),
\]
where $\coAlg_{\{F_1, \ldots, F_n\}}(\cat{C}) = \coAlg_{F_n}(\coAlg_{\{F_1, \ldots, F_{n-1}\}}(\cat{C}))$. Unwinding the definition, we see that an object of the $\infty$-category $\coAlg_{\{F_k\}}(\cat{C})$ consists of an object $X$ of $\cat{C}$ together with compatible maps $X \to F_k X$ for every $k \geq 1$. Let $\Fix_{\{F_k\}}(\cat{C})$ denote the full subcategory of $\coAlg_{\{F_k\}}(\cat{C})$ spanned by those objects where the map $X \to F_k X$ is an equivalence for each $k \geq 1$. Dually, there is an $\infty$-category of algebras for a family of commuting endofunctors $\{G_k\}_{k \geq 1}$ of $\cat{C}$ which we will denote by $\Alg_{\{G_k\}}(\cat{C})$. An object of $\Alg_{\{G_k\}}(\cat{C})$ consists of an object of $\cat{C}$ together with compatible maps $G_k X \to X$ for every $k \geq 1$. We have the following result: 

\begin{proposition} \label{proposition:adjunction_TR_L}
There is an adjunction of $\infty$-categories
\[
\begin{tikzcd}[column sep = large]
	\TCart \arrow[yshift=0.7ex]{r}{\mathrm{L}} & \CycSp^{\gen} \arrow[yshift=-0.7ex]{l}{\TR^{\gen}} 
\end{tikzcd}
\]
\end{proposition}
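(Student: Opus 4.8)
The plan is to present $\mathrm{L}$ and $\TR^{\gen}$ as an instance of the adjunction between an $\infty$-category and its $\infty$-category of coalgebras for a family of commuting endofunctors, following the $p$-typical argument of Antieau--Nikolaus. First I would record the coalgebraic descriptions of the two sides. Unwinding Definition~\ref{definition:cycsp_gen} and Definition~\ref{definition:Tcart_gen} and using that the multiplicative monoid $\N^{\times}$ is the free commutative monoid on the set of prime numbers, one identifies $\CycSp^{\gen} \simeq \Fix_{\{(-)^{\Phi C_k}\}}(\Sp_{\T})$ and, via Proposition~\ref{proposition:TCart_equiv_TCart_gen}, $\TCart \simeq \Fix_{\{(-)^{C_k}\}}(\Sp_{\T})$, where $\{(-)^{\Phi C_k}\}_{k \geq 1}$ and $\{(-)^{C_k}\}_{k \geq 1}$ are the families of commuting endofunctors of $\Sp_{\T}$ furnished by Example~\ref{example:geometric_fixedpoints} and Lemma~\ref{lemma:N_action_SpT}; here we use that $(-)^{\Phi C_k}$ preserves colimits and, as noted in Remark~\ref{remark:restrict_bounded_below}, that $(-)^{C_k}$ preserves limits. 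Under these identifications the isotropy-separation comparison transformations $(-)^{C_k} \to (-)^{\Phi C_k}$ of Remark~\ref{remark:isotropy}, which are natural in $k$, supply the transition maps in the towers of Remark~\ref{remark:cofinality}, so that $\mathrm{L}(M) \simeq \varinjlim_n M^{\Phi C_{n!}}$ in $\CycSp^{\gen}$ and $\TR^{\gen}(X) \simeq \varprojlim_n X^{C_{n!}}$ in $\TCart$.

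Next I would produce the unit $\mathrm{id}_{\TCart} \to \TR^{\gen}\mathrm{L}$ and counit $\mathrm{L}\,\TR^{\gen} \to \mathrm{id}_{\CycSp^{\gen}}$ and verify the triangle identities by running the coalgebraic argument of~\cite[Section II.5]{NS18} with the whole commuting family $\{(-)^{\Phi C_k}\}$ in place of the single functor $(-)^{\Phi C_p}$, exactly as in the $p$-typical case~\cite[Proposition 5.2]{AN20} (which itself rests on~\cite[Proposition 3.17]{AN20} and Krause--Nikolaus~\cite[Proposition 10.3]{KN19} together with the equivalence $\TCart_p \simeq \TCart_p^{\gen}$); the triangle identities may be checked after applying the conservative forgetful functor $\TCart \to \Sp^{\B\T}$, where they reduce to the telescoping identities for the towers of Remark~\ref{remark:cofinality} governed by the isotropy-separation squares. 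Equivalently, one may argue one prime at a time and assemble: restriction along $\mu_{p^{\N}} \hookrightarrow \N^{\times}$ yields functors $\TCart \to \TCart_p^{\gen}$ and $\CycSp^{\gen} \to \CycSp_p^{\gen}$ intertwining $\mathrm{L}$ with $\mathrm{L}_p$ and $\TR^{\gen}$ with $\TR_p^{\gen}$; since $\N^{\times} \cong \bigoplus_{p} \mu_{p^{\N}}$, the $\infty$-categories $\Fix_{\{(-)^{\Phi C_k}\}}(\Sp_{\T})$ and $\Fix_{\{(-)^{C_k}\}}(\Sp_{\T})$ are the limits, over the poset of finite sets $S$ of primes, of the analogous $\Fix$-categories formed from the sub-families indexed by $\{p^i : p \in S,\ i \geq 0\}$, and because the endofunctors attached to distinct primes commute the per-prime adjunctions of~\cite[Proposition 5.2]{AN20} are mutually compatible, so taking the limit produces the desired adjunction $\mathrm{L} \dashv \TR^{\gen}$.

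The main obstacle is keeping track of the infinite hierarchy of coherences: one must verify that the colimit defining $\mathrm{L}(M)$ genuinely refines to an object of $\CycSp^{\gen}$, i.e. that $\varinjlim_n M^{\Phi C_{n!}}$ is coherently fixed by every $(-)^{\Phi C_k}$ (and dually that $\varprojlim_n X^{C_{n!}}$ is coherently fixed by every $(-)^{C_k}$), and that the per-prime adjunctions glue without ambiguity. This is precisely what the formalism of coalgebras for a family of commuting endofunctors recalled immediately before the statement is designed to control, so the proof mostly amounts to checking the hypotheses of that formalism: commutation of the geometric (respectively genuine) fixed-point functors, their preservation of colimits (respectively limits), and the naturality in $k$ of the isotropy-separation comparison map. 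The only input not already contained in~\cite{AN20} is the passage from a single prime to all primes, which is formal given that $\N^{\times}$ is free on the set of primes.
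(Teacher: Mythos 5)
Your first route points in the same direction as the paper's proof---identify both $\infty$-categories with $\mathrm{Fix}$-categories for a commuting family of endofunctors of $\Sp_{\T}$ à la~\cite[Section II.5]{NS18}, and extend~\cite[Proposition 5.2]{AN20} from one prime to the whole family---but the details where you depart from this template contain problems, and the step that actually carries the weight in the paper is left implicit.

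First, you propose to ``produce the unit and counit and verify the triangle identities'' and then to check the triangle identities ``after applying the conservative forgetful functor $\TCart \to \Sp^{\B\T}$.'' Conservativity lets you check that a morphism is an equivalence; a triangle identity is the assertion that a specific composite is homotopic to the \emph{identity}, which is not a property reflected by a conservative functor. In the $\infty$-categorical setting the paper sidesteps this entirely: it builds only the candidate unit $\eta : \id \to \TR^{\gen}\circ\mathrm{L}$ (via Remark~\ref{remark:cofinality}) and shows directly that $\eta$ induces an equivalence $\Map_{\CycSp^{\gen}}(\mathrm{L}M, X) \to \Map_{\TCart}(M, \TR^{\gen}X)$. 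That verification is the content of the proof: after identifying $\CycSp^{\gen} \hookrightarrow \coAlg_{\{(-)^{\Phi C_{k!}}\}}(\Sp_{\T})$ with a localization with left adjoint $X \mapsto \colim_n X^{\Phi C_{n!}}$, and dually $\TCart \hookrightarrow \Alg_{\{(-)^{C_{k!}}\}}(\Sp_{\T})$ as a colocalization with right adjoint $M \mapsto \lim_m M^{C_{m!}}$, one reduces to comparing the mapping spaces $\Map_{\coAlg_{\{(-)^{\Phi C_{k!}}\}}}(M, X)$ and $\Map_{\Alg_{\{(-)^{C_{k!}}\}}}(M, X)$, which is done by an induction on $k$ matching the two nested lax-equalizer formulas term by term using the equivalences $M^{C_{k!}} \simeq M$ and $X^{\Phi C_{k!}} \simeq X$. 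Your proposal names the right formalism but never descends to this inductive comparison, which is where the ``infinite hierarchy of coherences'' you flag is actually tamed.

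Second, the alternative ``one prime at a time'' assembly has a genuine identification error. Restriction along $\B\mu_{p^{\N}} \hookrightarrow \B\N^{\times}$ produces a functor $(\Sp_{\T})^{h\N^{\times}} \to (\Sp_{\T})^{h\mu_{p^{\N}}}$, but the codomain is \emph{not} $\TCart_p^{\gen}$ (nor is the analogous thing $\CycSp_p^{\gen}$): the $p$-typical categories of~\cite{AN20} are built on $\Sp_{\T, p}$, the genuine $\T$-spectra for the family of finite $p$-subgroups, whereas $(\Sp_{\T})^{h\mu_{p^{\N}}}$ is built on the integral $\Sp_{\T}$, which sees all finite cyclic subgroups. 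So the per-prime adjunctions you intend to glue are not the ones established in~\cite[Proposition 5.2]{AN20}, and the assembly as written does not go through without first proving a per-prime adjunction on the integral $\Sp_{\T}$---at which point you may as well run the inductive comparison of the paper's proof directly.
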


\begin{proof}
In the following, we will implicitly employ the equivalence $\TCart \simeq \TCart^{\gen}$ established in Proposition~\ref{proposition:TCart_equiv_TCart_gen}. Let $X$ be a genuine cyclotomic spectrum and let $M$ be a topological Cartier module. There is a natural transformation $\eta : \id \to \TR^{\gen} \circ \mathrm{L}$ incuced by the canonical map
\[
	M \simeq \mathrm{lim}_{m \in \N} M^{C_{m!}} \to \mathrm{lim}_{m \in \N}(\mathrm{colim}_{n \in \N} M^{\Phi C_{n!}})^{C_{m!}},
\]
by virtue of Remark~\ref{remark:cofinality}, and we show that $\eta$ induces an equivalence of spaces
\[
	\Map_{\CycSp^{\gen}}(\mathrm{L}(M), X) \to \Map_{\TCart}(M, \TR^{\gen}(X)).
\]
There is an equivalence of $\infty$-categories $\CycSp^{\gen} \simeq \Fix_{\{(-)^{\Phi C_{k!}}\}}(\Sp_{\T})$ by~\cite[Lemma II.5.9]{NS18}, so we obtain a fully faithful functor of $\infty$-categories
\[
	\CycSp^{\gen} \hookrightarrow \coAlg_{\{(-)^{\Phi C_{k!}}\}}(\Sp_{\T})
\]
which admits a left adjoint  given by $X \mapsto \mathrm{colim}_{n \in \N} X^{\Phi C_{n!}}$ by~\cite[Section II.5]{NS18}. It follows that
\[
	\Map_{\CycSp^{\gen}}(\mathrm{L}(M), X) \simeq \Map_{\coAlg_{\{(-)^{\Phi C_{k!}}\}}(\Sp_{\T})}(M, X),	
\]
where $M$ is regarded as a coalgebra using the map $M \simeq M^{C_{k!}} \to M^{\Phi C_{k!}}$ for each $k \geq 1$. Similarly, we find that there is a fully faithful functor of $\infty$-categories
\[
	\TCart \hookrightarrow \Alg_{\{(-)^{C_{k!}}\}}(\Sp_{\T})
\]
which admits a right adjoint given by $M \mapsto \mathrm{lim}_{m \in \N} M^{C_{m!}}$. We conclude that
\[
	\Map_{\TCart}(M, \TR^{\gen}(X)) \simeq \Map_{\Alg_{\{(-)^{C_{k!}}\}}(\Sp_{\T})}(M, X),
\]
where $X$ is regarded as an algebra using the map $X^{C_{k!}} \to X^{\Phi C_{k!}} \simeq X$ for each $k \geq 1$. In other words, we have reduced to showing that $\eta : \id \to \TR^{\gen} \circ \mathrm{L}$ induces a natural equivalence of spaces
\[
	\Map_{\coAlg_{\{(-)^{\Phi C_{k!}}\}}(\Sp_{\T})}(M, X) \simeq \Map_{\Alg_{\{(-)^{C_{k!}}\}}(\Sp_{\T})}(M, X).
\]
Using~\cite[Proposition II.1.5]{NS18}, we have that $\Map_{\coAlg_{\{(-)^{\Phi C_{1!}}, \ldots, (-)^{\Phi C_{(k+1)!}}\}}(\Sp_{\T})}(M, X)$ is given by the following equalizer
\[
\begin{tikzcd}
	\Map_{\coAlg_{\{(-)^{\Phi C_{1!}}, \ldots, (-)^{\Phi C_{k!}}\}}(\Sp_{\T})}(M, X) \arrow[yshift=0.7ex]{r} \arrow[yshift=-0.7ex]{r} & \Map_{\coAlg_{\{(-)^{\Phi C_{1!}}, \ldots, (-)^{\Phi C_{k!}}\}}(\Sp_{\T})}(M, X^{\Phi C_{(k+1)!}}),
\end{tikzcd}
\]
where $M \to X$ is carried to $M \to X \simeq X^{\Phi C_{(k+1)!}}$ by the top map, and to $M \simeq M^{C_{(k+1)!}} \to M^{\Phi C_{(k+1)!}}$ by the bottom map. Similarly, we have that $\Map_{\Alg_{\{(-)^{C_{1!}}, \ldots, (-)^{C_{(k+1)!}}\}}(\Sp_{\T})}(M, X)$ is given by the following equalizer
\[
\begin{tikzcd}
	\Map_{\Alg_{\{(-)^{C_{1!}}, \ldots, (-)^{C_{k!}}\}}(\Sp_{\T})}(M, X) \arrow[yshift=0.7ex]{r} \arrow[yshift=-0.7ex]{r} & \Map_{\coAlg_{\{(-)^{C_{1!}}, \ldots, (-)^{C_{k!}}\}}(\Sp_{\T})}(M^{C_{(k+1)!}}, X),
\end{tikzcd}
\]
where $M \to X$ is carried to $M^{C_{(k+1)!}} \to M \to X$ by the top map, and to $M^{C_{(k+1)!}} \to M^{\Phi C_{(k+1)!}} \simeq M$ by the bottom map. By induction, we conclude that these two equalizers are equivalent using the compatible equivalences $M^{C_k} \simeq M$ and $X^{\Phi C_k} \simeq X$ for each $k \geq 1$. 
\end{proof}

We prove the main result of this section. As mentioned previously, this result is inspired by the result of Blumberg--Mandell~\cite{BM16} which asserts that $\TR^{\gen}$ is corepresentable by the reduced topological Hochschild homology $\widetilde{\THH}(\S[t])$ as a functor defined on the homotopy category of the $\infty$-category of genuine cyclotomic spectra with values in the homotopy category of spectra (see~\cite[Theorem 6.12]{BM16}).

\begin{theorem} \label{theorem:comparison_TR}
There is a natural equivalence of spectra with Frobenius lifts
\[
	\TR^{\gen}(X) \simeq \map_{\CycSp}(\widetilde{\THH}(\S[t]), X)
\]
for every cyclotomic spectrum $X$ whose underlying spectrum is bounded below. 
\end{theorem}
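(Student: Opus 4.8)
The strategy is to realise both sides as the value of the right adjoint of one and the same colimit-preserving functor $\Sp^{\Fr}\to\CycSp$, and then to restrict to bounded below objects. By Proposition~\ref{proposition:TR_right_adjoint} and Theorem~\ref{theorem:adjunction_cycspfr_cycsp}, $\map_{\CycSp}(\widetilde{\THH}(\S[t]),-)$ is the right adjoint of the canonical functor $c\colon\Sp^{\Fr}\to\CycSp$. On the other hand, the adjunction~\eqref{equation:free_adjunction} composed with the adjunction of Proposition~\ref{proposition:adjunction_TR_L} shows that $X\mapsto U\bigl(\TR^{\gen}(X)\bigr)$ is the right adjoint of the composite of left adjoints $\Sp^{\Fr}\xrightarrow{\Free}\TCart\xrightarrow{\mathrm{L}}\CycSp^{\gen}$, where $U\colon\TCart\to\Sp^{\Fr}$ is the forgetful functor of Remark~\ref{remark:unwind_tcart}. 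Writing $\iota\colon\CycSp^{\gen}\to\CycSp$ for the functor of Remark~\ref{remark:main_result_NS} --- which preserves colimits since the underlying-object functors $\CycSp^{\gen}\to\Sp^{\B\T}$ and $\CycSp\to\Sp^{\B\T}$ do, and the latter detects colimits --- we obtain a colimit-preserving functor $\iota\circ\mathrm{L}\circ\Free\colon\Sp^{\Fr}\to\CycSp$ whose right adjoint is $U\circ\TR^{\gen}\circ\iota^{R}$ for a right adjoint $\iota^{R}$ of $\iota$.

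The technical core is the identification $c\simeq\iota\circ\mathrm{L}\circ\Free$ as functors $\Sp^{\Fr}\to\CycSp$. Since $\Sp^{\Fr}=\cat{P}_{\Sp}(\B\W)$ and both functors preserve colimits, the stable nerve--realisation equivalence underlying Proposition~\ref{proposition:stable_realization_right_adjoint} reduces this to producing a $\W$-equivariant equivalence $\mathrm{L}\bigl(\Free(\S[\W])\bigr)\simeq c(\S[\W])$ in $\CycSp$, where $c(\S[\W])\simeq\widetilde{\THH}(\S[t])\simeq\Sigma^\infty_+\widetilde{\B}^{\epi}\Z_{\geq 0}$ with the cyclotomic structure and Witt-monoid action described in Construction~\ref{construction:canonical_functor} and Example~\ref{example:witt_monoid_right_multiplication}. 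By Corollary~\ref{corollary:explicit_Free}, $\Free(\S[\W])\simeq\bigoplus_{n\geq 1}(\S[\W])_{hC_n}$ with Verschiebung maps the evident inclusions; the isotropy-separation sequence of Remark~\ref{remark:isotropy} identifies $\Free(\S[\W])^{\Phi C_k}$ on underlying spectra with $\mathrm{cofib}\bigl(V_k\colon\Free(\S[\W])_{hC_k}\to\Free(\S[\W])\bigr)\simeq\bigoplus_{n\,:\,k\nmid n}(\S[\W])_{hC_n}$, and by Remark~\ref{remark:cofinality} one then computes $\mathrm{L}\bigl(\Free(\S[\W])\bigr)\simeq\colim_{(\N,\mathrm{div})^{\op}}\Free(\S[\W])^{\Phi C_{(-)}}$. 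A bookkeeping argument --- tracking the restriction maps of Construction~\ref{construction:the_other_restriction_map} against the explicit Frobenius lifts on $\widetilde{\B}^{\epi}\Z_{\geq 0}\simeq\coprod_{i\geq 1}S^1/C_i$ from Example~\ref{example:epibarconstruction_N} --- identifies this colimit, together with its cyclotomic structure maps and the residual right action of $\W$, with $\widetilde{\THH}(\S[t])$.

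Granting this identification, uniqueness of adjoints gives $\map_{\CycSp}(\widetilde{\THH}(\S[t]),-)\simeq U\circ\TR^{\gen}\circ\iota^{R}$ as functors $\CycSp\to\Sp^{\Fr}$. By the theorem of Nikolaus--Scholze recalled in Remark~\ref{remark:main_result_NS}, $\iota$ restricts to an equivalence on the full subcategories of bounded below objects, and since these generate $\CycSp^{\gen}$ under colimits the right adjoint $\iota^{R}$ restricted to $\CycSp_{\flat}$ is inverse to this equivalence, i.e.\ it sends a bounded below cyclotomic spectrum to its genuine refinement in the sense of Remark~\ref{remark:restrict_bounded_below}. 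Evaluating at $X\in\CycSp_{\flat}$ therefore yields the asserted equivalence $\map_{\CycSp}(\widetilde{\THH}(\S[t]),X)\simeq U\bigl(\TR^{\gen}(X)\bigr)$ of spectra with Frobenius lifts, which (as in the motivating result of Blumberg--Mandell) is the $\infty$-categorical refinement of corepresentability of genuine $\TR$ by $\widetilde{\THH}(\S[t])$.

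I expect the main obstacle to be the $\W$-equivariant identification of $\mathrm{L}(\Free(\S[\W]))$ with $\widetilde{\THH}(\S[t])$ --- in particular matching the cyclotomic Frobenius produced by the colimit over geometric fixed points with the one coming from the epicyclic structure of $\Sigma^\infty_+\widetilde{\B}^{\epi}\Z_{\geq 0}$, and checking compatibility with the full Witt-monoid action and not merely the circle action. A secondary point requiring care is that $\iota\colon\CycSp^{\gen}\to\CycSp$ is an equivalence only after restricting to bounded below objects, so the comparison $c\simeq\iota\circ\mathrm{L}\circ\Free$ must be run on all of $\Sp^{\Fr}$ --- using that it is generated under colimits by $\S[\W]$ and its shifts, all of which are bounded below --- before one may safely restrict.
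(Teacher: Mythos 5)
Your strategy is, at its core, the same as the paper's: realize both sides as right adjoints of $\mathrm{L}\circ\Free$ (equivalently, of the canonical functor $c$), and conclude by uniqueness of adjoints. The central computational fact you need---that $\iota\circ\mathrm{L}\circ\Free\simeq c$---is exactly the fact the paper proves, and the ingredients you list (the explicit formula for $\Free$ from Corollary~\ref{corollary:explicit_Free}, the sequential-colimit rewriting of $\mathrm{L}$ from Remark~\ref{remark:cofinality}) are the ones the paper uses. You should also explicitly invoke Lemma~\ref{lemma:genuine_tate_orbit}: for composite $k$ the identification $\Free(X)^{\Phi C_k}\simeq\cofib(V_k)$ is not just Remark~\ref{remark:isotropy} (which treats a single prime), but an iteration of it across primes requiring the genuine Tate orbit lemma. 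This is the load-bearing lemma of the whole proof and your sketch only gestures at it.

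Where you diverge organizationally, the paper is simpler. The paper restricts to bounded below subcategories throughout and shows $\mathrm{L}\Free(X)\simeq X$ naturally in $X\in\Sp^{\Fr}_\flat$ (this is cleaner than matching the $\W$-equivariant structure on a single generator, since the naturality in $X$ comes for free from the formulas and the Frobenius data ride along), then invokes the Nikolaus--Scholze equivalence $\CycSp^{\gen}_\flat\simeq\CycSp_\flat$ directly. Your version instead works with $\iota$ on all of $\CycSp^{\gen}$ and introduces a right adjoint $\iota^R$, for which you must then verify that $\iota$ preserves colimits (plausible but not stated in the paper), that bounded below objects generate $\CycSp^{\gen}$ under colimits (a non-trivial claim about a limit of $\Sp_\T$ over $\B\N^{\times}$ that you assert without argument), and that $\iota^R|_{\CycSp_\flat}$ agrees with the Nikolaus--Scholze inverse. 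None of this is needed if one just restricts to $(-)_\flat$ from the start as the paper does, using that $\Free$, $\mathrm{L}$ and the forgetful functors all preserve the bounded below condition. Your secondary caveat at the end correctly senses this pressure point, but the cure is to avoid $\iota^R$ altogether rather than to control it via generation.
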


The proof of Theorem~\ref{theorem:comparison_TR} relies on a genuine version of the Tate orbit lemma~\cite[Lemma I.2.1]{NS18} obtained by Antieau--Nikolaus~\cite{AN20}.

\begin{lemma} \label{lemma:genuine_tate_orbit}
If $Y$ is a genuine $\T$-spectrum, then the canonical map
\[
	(Y^{C_p})^{\Phi C_p} \to Y^{\Phi C_{p^2}}
\]
is an equivalence in the $\infty$-category $\Sp_{\T}$ for every prime $p$. 
\end{lemma}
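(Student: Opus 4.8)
The plan is to reduce the statement to the fact that, in the genuine setting, geometric fixed points annihilate torsion spectra for the relevant family of subgroups --- which is precisely where one gains over the Tate-theoretic orbit lemma \cite[Lemma I.2.1]{NS18}, so that no boundedness hypothesis is needed. Throughout, let $\cat{F}_p$ be the family of finite subgroups of $\T$ not containing $C_p$, i.e.\ $\cat{F}_p = \{C_d : p \nmid d\}$.

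First I would record two structural inputs. \emph{(i) Transitivity and the comparison map.} Since $i_{p^2} = i_p \circ i_p$ in Construction~\ref{construction:actionN_OrbT}, the right adjoints of Lemma~\ref{lemma:N_action_SpT} satisfy $p_{p^2} \simeq p_p \circ p_p$, hence $\Span(p_{p^2})^{\ast} \simeq \Span(p_p)^{\ast} \circ \Span(p_p)^{\ast}$, and passing to left adjoints in Example~\ref{example:geometric_fixedpoints} yields a natural equivalence $(-)^{\Phi C_{p^2}} \simeq (-)^{\Phi C_p} \circ (-)^{\Phi C_p}$. Unwinding Construction~\ref{construction:the_other_restriction_map} with $l = n = p$ and $m = p^2$, the canonical map of the statement is, under this identification, the result of applying $(-)^{\Phi C_p}$ to the canonical comparison map $\kappa_Y \colon Y^{C_p} \to Y^{\Phi C_p}$ from categorical to geometric fixed points; so it suffices to prove that $(-)^{\Phi C_p}$ carries $\kappa_Y$ to an equivalence, naturally in $Y$. \emph{(ii) Isotropy separation.} As in Remark~\ref{remark:isotropy} and \cite{BG16}, there is a natural cofiber sequence in $\Sp_{\T}$
\[
	E\cat{F}_{p,+} \otimes Y \to Y \to \widetilde{E}\cat{F}_p \otimes Y,
\]
in which $E\cat{F}_{p,+} \otimes Y$ is $\cat{F}_p$-torsion, $\widetilde{E}\cat{F}_p \otimes W \simeq 0$ for every $\cat{F}_p$-torsion $W$, and there is a natural identification $Y^{\Phi C_p} \simeq (\widetilde{E}\cat{F}_p \otimes Y)^{C_p}$ reconciling the Kan-extension definition of $(-)^{\Phi C_p}$ with categorical fixed points of the $\cat{F}_p$-localization.

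Granting these, the argument is short. The functor $(-)^{C_p} \colon \Sp_{\T} \to \Sp_{\T}$ is exact, being a restriction functor on spectral Mackey functors, so applying it to the above cofiber sequence identifies $\kappa_Y$ with $Y^{C_p} \to (\widetilde{E}\cat{F}_p \otimes Y)^{C_p}$ and identifies $\fib(\kappa_Y)$ with $Z^{C_p}$ for $Z := E\cat{F}_{p,+} \otimes Y$. Now $Z$ is $\cat{F}_p$-torsion, so its restriction along $C_p \hookrightarrow \T$ is a free $C_p$-spectrum and the Adams isomorphism (shift-free, since $C_p$ is finite) gives $Z^{C_p} \simeq Z_{hC_p}$ as a genuine $\T/C_p \simeq \T$-spectrum. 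Moreover $Z_{hC_p}$ is again $\cat{F}_p$-torsion: $Z$ is built under colimits from the orbits $\Sigma^{\infty}_+ \T/C_d$ with $p \nmid d$, and $(\Sigma^{\infty}_+ \T/C_d)_{hC_p} \simeq \Sigma^{\infty}_+ \T/C_{pd}$ is, under $\T/C_p \simeq \T$, again the orbit $\Sigma^{\infty}_+ \T/C_d$ with $p \nmid d$. Finally, any $\cat{F}_p$-torsion spectrum $W$ satisfies $W^{\Phi C_p} \simeq (\widetilde{E}\cat{F}_p \otimes W)^{C_p} \simeq 0$; applying this to $W = Z_{hC_p}$ shows that the geometric fixed points of $\fib(\kappa_Y)$ vanish. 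Hence $(\kappa_Y)^{\Phi C_p}$ is an equivalence, and therefore $(Y^{C_p})^{\Phi C_p} \simeq (Y^{\Phi C_p})^{\Phi C_p} \simeq Y^{\Phi C_{p^2}}$, naturally in $Y$.

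I expect the main obstacle to be the bookkeeping packaged into the two structural inputs: constructing the families and smashing localizations for $\cat{F}_p$ inside $\Sp_{\T} = \Mack(\OrbT)$, matching the description $(\widetilde{E}\cat{F}_p \otimes (-))^{C_p}$ of $(-)^{\Phi C_p}$ with the abstract definition of Example~\ref{example:geometric_fixedpoints}, and verifying the Adams isomorphism $Z^{C_p} \simeq Z_{hC_p}$ for $C_p$-free genuine $\T$-spectra --- all essentially available from \cite{BG16}. To avoid this machinery, an alternative is to note that $Y \mapsto (Y^{C_p})^{\Phi C_p}$ and $Y \mapsto Y^{\Phi C_{p^2}}$ both preserve colimits, reduce to the compact generators $Y = \Sigma^{\infty}_+(\T/C_n)$, and compute both sides together with $\kappa_Y$ directly from the orbital Segal--tom Dieck splitting of Proposition~\ref{proposition:tomDieck_orbital}; there the content is exactly that the transfer summands of $(\Sigma^{\infty}_+ \T/C_n)^{C_p}$ are $p$-free and hence annihilated by $(-)^{\Phi C_p}$.
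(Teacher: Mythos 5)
The paper gives no argument here: it simply cites Antieau--Nikolaus~\cite[Lemma 5.3]{AN20}. Your isotropy-separation proof is correct and is, as far as I can tell, the same line of reasoning that reference uses: factor $(-)^{\Phi C_{p^2}} \simeq (-)^{\Phi C_p} \circ (-)^{\Phi C_p}$, reduce to showing that $(-)^{\Phi C_p}$ inverts the comparison $\kappa_Y \colon Y^{C_p} \to Y^{\Phi C_p}$, identify $\fib(\kappa_Y)$ with $(E\cat{F}_{p,+} \otimes Y)^{C_p}$ by isotropy separation, convert categorical fixed points to homotopy orbits via the Adams isomorphism (no shift, $C_p$ being finite and the restriction to $C_p$ being free), and observe that the result is $\cat{F}_p$-torsion over $\T/C_p \simeq \T$, hence killed by $\Phi C_p$.

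Two remarks on the execution. First, the step that $Z_{hC_p}$ is again $\cat{F}_p$-torsion is phrased via generators; this is fine provided one invokes the (standard) fact that the $\cat{F}_p$-torsion subcategory of $\Sp_{\T}$ is the localizing subcategory generated by the orbits $\Sigma^\infty_+\T/C_d$ with $p \nmid d$, so that $Z = E\cat{F}_{p,+} \otimes Y$ is built cell-by-cell from $\Sigma^n \Sigma^\infty_+(\T/C_d) \otimes Y$ and the conclusion passes through colimits since $(-)_{hC_p}$ and $(-)^{\Phi C_p}$ preserve them. A slightly cleaner variant avoids generators entirely: by the projection formula, $\widetilde{E}\cat{F}'_p \otimes Z_{hC_p} \simeq \bigl(\mathrm{infl}(\widetilde{E}\cat{F}'_p) \otimes E\cat{F}_{p,+} \otimes Y\bigr)_{hC_p}$, where $\cat{F}'_p$ is the analogous family in $\T/C_p$, and $\mathrm{infl}(\widetilde{E}\cat{F}'_p) \wedge E\cat{F}_{p,+}$ has contractible $C_m$-fixed points for every $m$ (one factor is a point whenever the other is $S^0$), so the smash product, and hence $Z_{hC_p}^{\Phi C_p}$, vanishes. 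Second, you correctly flag that the structural inputs in (ii) --- the universal spaces, the identification of the Kan-extension $(-)^{\Phi C_p}$ with $(\widetilde{E}\cat{F}_p \otimes -)^{C_p}$, and the Adams isomorphism inside $\Mack(\OrbT)$ --- are the real technical load; they are available from \cite{BG16} and classically from \cite{LMS86,MM02}, but a proof internal to the paper's spectral-Mackey-functor formalism would need to set them up, which is presumably why the paper defers to~\cite{AN20}. Your alternative route via compact generators and Proposition~\ref{proposition:tomDieck_orbital} is also viable and sidesteps the Adams isomorphism.
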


\begin{proof}
See Antieau--Nikolaus~\cite[Lemma 5.3]{AN20}. 
\end{proof}

\begin{proof}[Proof of Theorem~\ref{theorem:comparison_TR}]
First note that the composite 
\[
	\Sp^{\Fr}_{\flat} \xrightarrow{\Free} \TCart_{\flat} \xrightarrow{\mathrm{L}} \CycSp^{\gen}_{\flat} \xrightarrow{\simeq} \CycSp_{\flat}
\]
is equivalent to the canonical functor $\Sp^{\Fr}_{\flat} \to \CycSp_{\flat}$ as a consequence of Corollary~\ref{corollary:explicit_Free} combined with the formula for the functor $\mathrm{L}$. Indeed, we have that
\[
	\mathrm{L}\Free(X) \simeq \varinjlim\Big( \bigoplus_{k \geq 1} X_{hC_k} \xrightarrow{\mathrm{pr}} \bigoplus_{\substack{k \geq 1 \\ 2 \nmid k}} X_{hC_k} \xrightarrow{\mathrm{pr}} \bigoplus_{\substack{k \geq 1 \\ 2, 3 \nmid k}} X_{hC_k} \xrightarrow{\mathrm{pr}} \cdots\Big) \simeq X,
\]
by Lemma~\ref{lemma:genuine_tate_orbit}, thus it follows that $\mathrm{L}\Free(X) \simeq X$ as objects of the $\infty$-category $\CycSp^{\gen}$. Now, since every functor in the composite above is a left adjoint, we conclude that the composite
\[
	\CycSp_{\flat} \xrightarrow{\simeq} \CycSp^{\gen}_{\flat} \xrightarrow{\TR^{\gen}} \TCart_{\flat} \to \Sp^{\Fr}_{\flat}
\]
determines a right adjoint of the canonical functor $\Sp^{\Fr}_{\flat} \to \CycSp_{\flat}$, which shows the wanted by virtue of Proposition~\ref{proposition:TR_right_adjoint}. 
\end{proof}

\section{Applications to curves on K-theory} \label{section:application}
We discuss an application of Theorem~\ref{theorem:comparison_TR}. Specifically, we prove that $\TR$ evaluated on a connective $\E_1$-ring $R$ admits a description in terms of the spectrum of curves on the algebraic $\K$-theory of $R$, extending work of Hesselholt~\cite{Hes96} and Betley--Schlichtkrull~\cite{BS05}.  

\subsection{Topological Hochschild homology of truncated polynomial algebras} \label{subsection:truncated}
In this section, we obtain a convenient description of the cyclotomic structure on $\varprojlim \widetilde{\THH}(\S[t]/t^n)$ which will be instrumental in \textsection\ref{subsection:curves}. Concretely, we compute the Hochschild homology of $\Z[t]/t^n$ as a commutative differential graded algebra, and deduce a connectivity estimate for $\THH(\S[t]/t^n)$. Similar computations have previously been obtained by Hesselholt--Madsen~\cite{HM97,HM97b} building on the work of~\cite{GGRSV91,LL92}. We refer the reader to Speirs~\cite{Spe20} for a summary of these computations. We will begin by reviewing the notion of a graded object in a symmetric monoidal $\infty$-category. 

\begin{notation} \label{notation:graded}
Let $\Z_{\geq 0}^{\ds}$ denote the set of non-negative integers regarded as a discrete category, and let $\cat{C}$ denote a presentable symmetric monoidal $\infty$-category.

\begin{enumerate}[leftmargin=2em, topsep=7pt, itemsep=7pt]
	\item The $\infty$-category of graded objects of $\cat{C}$ is defined by
	\[
		\Gr(\cat{C}) = \Fun(\Z_{\geq 0}^{\ds}, \cat{C}),
	\]
	which we regard as a symmetric monoidal $\infty$-category using the Day convolution symmetric monoidal structure coming from the symmetric monoidal structure on $\Z_{\geq 0}^{\ds}$ given by multiplication. We will denote an object $X$ of $\Gr(\cat{C})$ by $\{X_i\}_{i \geq 0}$. For every integer $n \geq 0$, the inclusion $\{n\} \hookrightarrow \Z_{\geq 0}^{\ds}$ induces an evaluation functor
	\[
		\mathrm{ev}_n : \Gr(\cat{C}) \to \cat{C}
	\]
	which is determined by the construction $\{X_i\}_{i \geq 0} \mapsto X_n$.  

	\item The projection $\Z_{\geq 0}^{\ds} \to \{0\}$ induces a functor $\cat{C} \to \Gr(\cat{C})$ which admits a left adjoint 
	\[
		\mathrm{und} : \Gr(\cat{C}) \to \cat{C}
	\]
	determined by forming the left Kan extension along the projection $\Z_{\geq 0}^{\ds} \to \{0\}$. Concretely, the functor $\mathrm{und} : \Gr(\cat{C}) \to \cat{C}$ is determined by the construction $\{X_i\}_{i \geq 0} \mapsto \bigoplus_{i \geq 0} X_i$. The functor $\cat{C} \to \Gr(\cat{C})$ canonically admits a symmetric monoidal structure since the projection $\Z_{\geq 0}^{\ds} \to \{0\}$ is a map of commutative monoids, so the left adjoint $\mathrm{und} : \Gr(\cat{C}) \to \cat{C}$ canonically refines to a symmetric monoidal functor of $\infty$-categories (see~\cite[Corollary 3.8]{Nik16}). 
\end{enumerate}
\end{notation}

\begin{remark}
If $\{X_i\}_{i \geq 0}$ is a graded spectrum, then the underlying spectrum $X = \mathrm{und}(\{X_i\}_{i \geq 0})$ of $\{X_i\}_{i \geq 0}$ carries an additional grading on its homotopy groups using the formula
\[
	\pi_\ast(X) \simeq \bigoplus_{i \geq 0} \pi_\ast(X_i). 
\]
If $x \in \pi_\ast(X_i)$, then we will write $w(x) = i$, and think of this as the horizontal grading direction. See Example~\ref{example:picture_gradings} and the discussion following the proof of Proposition~\ref{proposition:HH_truncated_dga}. 
\end{remark}

A pointed monoid is a monoid object in the category of pointed sets equipped with the smash product symmetric monoidal structure. For every $n \in \N \cup \{\infty\}$, we will let $\Pi_n = \{0, 1, t, \ldots, t^{n-1}\}$ denote the pointed monoid with $0$ as basepoint and whose multiplication is determined by $t^n = 0$. In the following, we will regard the pointed monoid $\Pi_n$ as an object of the $\infty$-category $\CAlg(\Spaces_\ast)$ of pointed $\E_\infty$-monoids, where the $\infty$-category $\Spaces_\ast$ of pointed spaces is equipped with the smash product symmetric monoidal structure. The $\infty$-category $\CAlg(\Spaces_\ast)$ inherits the structure of a symmetric monoidal $\infty$-category whose unit is given by the pointed monoid $\{0, 1\}$.

\begin{example} \label{example:graded_Pi_infty}
In this example, we endow the underlying pointed space of $\Pi_\infty$ with two distinct graded pointed $\E_\infty$-monoid structures:
\begin{enumerate}[leftmargin=2em, topsep=7pt, itemsep=7pt]
	\item The functor $\Z_{\geq 0}^{\ds} \to \Spaces_\ast$ defined by $i \mapsto \{0, t^i\}$ endows the underlying pointed space of $\Pi_\infty$ with the structure of an object of the $\infty$-category $\Gr(\Spaces_\ast)$ of graded pointed spaces with $t$ in grading degree $1$, and we will denote this object by $\Pi_\infty^{w(t) = 1}$. We have that $\Pi_\infty^{w(t)=1}$ canonically refines to an object of the $\infty$-category $\CAlg(\Gr\Spaces_\ast)$ since the grading is compatible with the monoid structure on $\Pi_\infty$.

	\item The functor $\Z_{\geq 0}^{\ds} \to \Spaces_\ast$ defined by $i \mapsto \{0, t^j\}$ if $i = jn$ and by $i \mapsto \{0\}$ if $n$ does not divide $i$, endows the underlying pointed space of $\Pi_\infty$ with the structure of an object of the $\infty$-category $\Gr(\Spaces_\ast)$ with $t$ in grading degree $n$, and we will similarly denote this object by $\Pi_\infty^{w(t) = n}$. As before, we have that $\Pi_\infty^{w(t) = n}$ canonically  refines to an object of $\CAlg(\Gr\Spaces_\ast)$.
\end{enumerate}
\end{example}

For every integer $n \geq 1$, the assignment $t \mapsto t^n$ determines a map  of pointed monoids $\Pi_\infty \to \Pi_\infty$ which canonically refines to a map of $\E_\infty$-algebras $\Pi_\infty^{w(t) = n} \to \Pi_\infty^{w(t) = 1}$ in graded pointed spaces. We have the following result which will play an important role in the following. 

\begin{lemma} \label{lemma:pushout_pointed_monoids}
The following square is a pushout of $\E_\infty$-algebras in graded pointed spaces
\[
\begin{tikzcd}
	\Pi_\infty^{w(t)=n} \arrow{r}{t \mapsto t^n} \arrow{d}{t \mapsto 0} & \Pi_\infty^{w(t)=1} \arrow{d} \\
	\{0,1\} \arrow{r} & \Pi_n^{w(t)=1}
\end{tikzcd}
\]
\end{lemma}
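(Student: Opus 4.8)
The plan is to verify the pushout property levelwise in the grading, reducing to a statement about pushouts of pointed monoids (equivalently, pointed sets) in each grading degree. Since $\Gr(\Spaces_\ast) = \Fun(\Z_{\geq 0}^{\ds}, \Spaces_\ast)$ and colimits in a functor category are computed pointwise, and since the forgetful functor $\CAlg(\Gr\Spaces_\ast) \to \Gr\Spaces_\ast$ preserves sifted colimits — and a pushout of $\E_\infty$-algebras is computed as a sifted (bar construction) colimit, or more simply here, since all the objects in sight are \emph{discrete} pointed monoids and the relevant pushout of pointed commutative monoids is again discrete — it suffices to check that in each grading degree $i \geq 0$ the square of pointed sets is a pushout. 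Concretely, I would first argue that the pushout $P$ of pointed commutative monoids $\{0,1\} \leftarrow \Pi_\infty^{w(t)=n} \to \Pi_\infty^{w(t)=1}$ is the quotient pointed monoid $\Pi_\infty^{w(t)=1}/(t^n)$, i.e. the pointed monoid obtained from $\{0,1,t,t^2,\dots\}$ by identifying $t^n$ (and hence every $t^m$ with $m \geq n$) with the basepoint $0$; this is exactly $\Pi_n$ with $t$ placed in grading degree $1$. The reason is the standard description of pushouts of commutative monoids as a tensor product / quotient: $\{0,1\} \otimes_{\Pi_\infty^{w(t)=n}} \Pi_\infty^{w(t)=1}$ is $\Pi_\infty^{w(t)=1}$ modulo the relations forcing the image of $t^n \in \Pi_\infty^{w(t)=n}$ to equal the image of $0$.

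Next I would make the grading bookkeeping explicit. In grading degree $i$, the map $t \mapsto t^n$ sends the generator of $(\Pi_\infty^{w(t)=n})_i$ — which is $\{0, t^j\}$ when $i = jn$ and $\{0\}$ otherwise — to the element $t^{jn}$ of $(\Pi_\infty^{w(t)=1})_{jn} = (\Pi_\infty^{w(t)=1})_i$. The map $t \mapsto 0$ sends everything in positive internal degree to the basepoint of the unit $\{0,1\}$ (which is concentrated in grading degree $0$ with value $\{0,1\}$, and is $\{0\}$ in all positive grading degrees). So in grading degree $0$ the square is $\{0,1\} \leftarrow \{0,1\} \to \{0,1\}$, whose pushout is $\{0,1\} = (\Pi_n^{w(t)=1})_0$. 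In grading degree $i$ with $1 \leq i \leq n-1$, the top-left corner is $\{0\}$ and the top-right is $\{0, t^i\}$, while the bottom-left is $\{0\}$; the pushout is then $\{0, t^i\} = (\Pi_n^{w(t)=1})_i$. In grading degree $i \geq n$, the top-left corner $(\Pi_\infty^{w(t)=n})_i$ is nonzero precisely when $n \mid i$, in which case the map $t\mapsto t^n$ identifies its generator with $t^i$ in the top-right corner, and the map $t \mapsto 0$ kills it; the resulting pushout collapses $t^i$ to the basepoint, giving $\{0\} = (\Pi_n^{w(t)=1})_i$. When $n \nmid i$ and $i \geq n$ one checks the top-right $\{0, t^i\}$ must still be collapsed, using that $t^i = t^{i-n}\cdot t^n$ in the monoid $\Pi_\infty^{w(t)=1}$ and $t^n$ is already identified with $0$ (this is where one genuinely uses the multiplicative, not just graded-additive, structure). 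Assembling these over all $i$ gives the pushout $\Pi_n^{w(t)=1}$, with the evident $\E_\infty$-algebra structure, matching the lower-right corner of the asserted square.

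The main obstacle, and the step deserving the most care, is justifying that the pushout of $\E_\infty$-algebras in $\Gr(\Spaces_\ast)$ agrees with the naive quotient of discrete pointed commutative monoids — in other words, that no higher homotopy is introduced by the pushout. For this I would invoke that $\Pi_\infty$ and $\Pi_n$, viewed in $\CAlg(\Spaces_\ast)$, are \emph{discrete} (they are honest pointed monoids, i.e. $0$-truncated objects), together with the fact that the inclusion of discrete commutative monoids into $\CAlg(\Spaces_\ast)$ preserves this particular pushout because the map $t \mapsto t^n \colon \Pi_\infty^{w(t)=n} \to \Pi_\infty^{w(t)=1}$ is, in each grading, an injection of free-ish pointed sets, so the bar construction computing the pushout degenerates. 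Alternatively — and this is the cleanest route — one observes that the square in question is, after applying $\Sigma^\infty_+$ and base-changing to $\mathrm{H}\Z$, identified with the defining pushout for $\Z[t]/t^n$, and the point-set square of pointed monoids is a pushout in $\CAlg(\Spaces_\ast)$ essentially by inspection of generators and relations, since $\CAlg(\Spaces_\ast)$-pushouts along maps of discrete commutative monoids are computed in discrete commutative monoids whenever one of the two maps is "flat" in the appropriate combinatorial sense, which the augmentation $t \mapsto 0$ manifestly is. I would phrase the final writeup around this levelwise check together with the discreteness observation, keeping the bar-construction degeneracy argument in reserve as the rigorous backbone.
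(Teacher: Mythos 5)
Your bookkeeping of what the answer should look like in each grading degree is essentially correct, but the justifications you offer at the two decisive steps do not hold up, and the paper's proof proceeds quite differently.

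First, the stated reduction is incorrect and in fact contradicted by your own later observation. A pushout of $\E_\infty$-algebras in $\Gr(\Spaces_\ast)$ is a relative tensor product with respect to Day convolution, which mixes grading degrees; it is \emph{not} computed by taking the pushout of the underlying graded pointed spaces levelwise. And indeed, in grading degree $i > n$ with $n \nmid i$ the levelwise square of pointed sets is $\{0\} \leftarrow \{0\} \to \{0, t^i\}$, whose pushout is $\{0,t^i\}$, not the required $\{0\}$. You notice this yourself ("this is where one genuinely uses the multiplicative, not just graded-additive, structure"), but this is precisely the reason the plan "it suffices to check that in each grading degree the square of pointed sets is a pushout" cannot be right as stated: the two things you are comparing are different constructions, and they disagree.

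Second, the argument meant to carry the real weight — that the $\E_\infty$-algebraic pushout agrees with the naive one-categorical quotient of discrete pointed monoids — is misdirected. You claim this follows because "the augmentation $t \mapsto 0$ [is] manifestly [flat]." That is the wrong map, and it is not flat: the relevant flatness is that of $\Pi_\infty^{w(t)=1}$ as a module over $\Pi_\infty^{w(t)=n}$ along $t \mapsto t^n$ (it is in fact free on $1,t,\dots,t^{n-1}$), exactly analogous to $\Z[t]$ being free over $\Z[s]$ via $s \mapsto t^n$, whereas $\Z[t]\to\Z$, $t \mapsto 0$, is famously not flat. Your "cleanest route" alternative — checking the square becomes a pushout after applying $\Sigma^\infty_+$ and base-changing to $\mathrm{H}\Z$ — also does not yield the conclusion, since base change along $\S \to \mathrm{H}\Z$ is not conservative; knowing the image square is a pushout says nothing about the original.

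The paper's proof avoids all of this. It first reduces to the ungraded statement using that $\mathrm{und}\colon \CAlg(\Gr\Spaces_\ast) \to \CAlg(\Spaces_\ast)$ preserves colimits and is conservative (each graded piece is a retract of the wedge). It then computes the ungraded pushout $\{0,1\} \otimes_{\Pi_\infty} \Pi_\infty$ not by appealing to flatness of anything, but by proving the general assertion that for every $\Pi_\infty$-module $M$ in $\Spaces_\ast$ the relative tensor product $M \otimes_{\Pi_\infty} \{0,1\}$ is the cofiber of multiplication by $t$; since both sides preserve colimits in $M$ and $\Pi_\infty$ generates $\Mod_{\Pi_\infty}(\Spaces_\ast)$ under colimits, it suffices to verify this for $M = \Pi_\infty$, where it is immediate. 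Applied with $\Pi_\infty$ a module over itself via $t \mapsto t^n$, this identifies the pushout with the cofiber of multiplication by $t^n$, which is $\Pi_n$. If you want to keep a "discreteness" flavor in your writeup, this module-theoretic cofiber computation is the rigorous backbone to reach for, rather than a flatness claim about the augmentation.
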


\begin{proof}
We first prove the following general assertion: Assume that $M$ is a $\Pi_\infty$-module in $\Spaces_\ast$, and let $M'$ denote the cofiber of the endomorphism of $M$ given by multiplication by $t$. Then the canonical map $M \to M'$ induces an equivalence of $\Pi_\infty$-modules
\[
	M \otimes_{\Pi_\infty} \{0,1\} \to M'.
\]
We may assume that $M = \Pi_\infty$ since the unit $\Pi_\infty$ generates the $\infty$-category $\Mod_{\Pi_\infty}(\Spaces_\ast)$ under colimits, and in this case the assertion is true by inspection. To prove the assertion of the lemma, we note that it suffices to prove that the diagram above is a pushout of $\E_\infty$-algebras in $\Spaces_\ast$ after applying the forgetful functor $\CAlg(\Gr\Spaces_\ast) \to \CAlg(\Spaces_\ast)$. In this case, the pushout is given by $\Pi_\infty \otimes_{\Pi_\infty} \{0,1\}$, where the $\Pi_\infty$-module structure on $\Pi_\infty$ is obtained by restriction of scalars along the map $\Pi_\infty \to \Pi_\infty$ in $\CAlg(\Spaces_\ast)$ determined by $t \mapsto t^n$. Consequently, we have that $\Pi_\infty \otimes_{\Pi_\infty} \{0,1\}$ is equivalent to the cofiber of the map $\Pi_\infty \to \Pi_\infty$ given by multiplication by $t^n$, which shows the wanted.
\end{proof}

There is a functor of $\infty$-categories $\Sigma^\infty : \Gr(\Spaces_\ast) \to \GrSp$ obtained by composition with the reduced suspension spectrum functor. This functor canonically refines to a functor of $\infty$-categories
\[
	\Sigma^\infty : \CAlg(\Gr\Spaces_\ast) \to \CAlg(\GrSp)
\]
since $\Sigma^\infty : \Spaces_\ast \to \Sp$ and thus $\Sigma^\infty : \Gr\Spaces_\ast \to \GrSp$ admits the structure of a symmetric monoidal functor. It is a consequence of Lemma~\ref{lemma:pushout_pointed_monoids} that the following square is a pushout of $\E_\infty$-algebras in the $\infty$-category of graded spectra
\[
\begin{tikzcd}
	\S[s] \arrow{r}{s \mapsto t^n} \arrow{d}{s \mapsto 0} & \S[t] \arrow{d} \\
	\S \arrow{r} & \S[t]/t^n
\end{tikzcd}
\]
where $s$ is in grading degree $n$ and $t$ is in grading degree $1$. The $\E_\infty$-algebra $\S[t]/t^n$ in $\GrSp$ is defined by $\S[t]/t^n = \Sigma^\infty \Pi_n^{w(t)=1}$. We conclude that the diagram obtained by applying the functor $\mathrm{und} : \GrSp \to \Sp$ to the diagram above is a pushout of $\E_\infty$-rings.

\begin{example} \label{example:truncated_poly_sphere}
By the discussion above, there is an equivalence of $\E_\infty$-rings
\[
	\S[t]/t^n = \S \otimes_{\S[s]} \S[t],
\]
for every integer $n \geq 1$, where $\S[t]$ is a module over $\S[s]$ by restriction of scalars along the map of $\E_\infty$-rings $\S[s] \to \S[t]$ given by $s \mapsto t^n$. There is an isomorphism of commutative rings
\[
	\pi_0(\S[t]/t^n) \simeq \pi_0(\S) \otimes_{\pi_0(\S[s])} \pi_0(\S[t]) \simeq \Z \otimes_{\Z[s]} \Z[t] \simeq \Z[t]/t^n
\]
by virtue of~\cite[Corollary 7.2.1.23]{Lur17} since both $\S$ and $\S[t]$ are connective.
\end{example}

Presently, we discuss a graded refinement of cyclotomic spectra following~\cite{AMMN20}. The construction $\{X_i\}_i \mapsto \{X_{pi}^{tC_p}\}_i$ determines an endofunctor $F_p$ of the $\infty$-category $\GrSp^{\B\T}$ of graded spectra with $\T$-action for every prime $p$, where $X_{pi}^{tC_p}$ is equipped with the residual $\T/C_p \simeq \T$-action.

\begin{definition} \label{definition:grcycsp}
The $\infty$-category of graded cyclotomic spectra is defined as the pullback
\[
\begin{tikzcd}[column sep = large]
	\GrCycSp \arrow{r} \arrow{d} & \displaystyle\prod_{p} (\GrSp^{\B\T})^{\Delta^1} \arrow{d}{(\mathrm{ev}_0, \mathrm{ev}_1)}\\
	\GrSp^{\B\T} \arrow{r}{(\id, F_p)} & \displaystyle\prod_{p} (\GrSp^{\B\T} \times \GrSp^{\B\T})
\end{tikzcd}
\]
\end{definition} 

It follows from Definition~\ref{definition:grcycsp} that a graded cyclotomic spectrum is given by a graded spectrum with $\T$-action $\{X_i\}_{\geq 0}$ together with a $\T$-equivariant map
\[
	\varphi_{p, i} : X_i \to X_{pi}^{tC_p}
\]
for every prime number $p$ and integer $i \geq 0$, where the target carries the residual $\T/C_p \simeq \T$-action. Informally, the cyclotomic Frobenius multiplies the grading degree by $p$. If $R$ is an $\E_1$-algebra in the $\infty$-category of graded spectra, then there is a functor of $\infty$-categories
\[
	\THH^{\mathrm{gr}} : \Alg(\GrSp) \to \GrCycSp
\]
obtained by applying the cyclic bar construction in the $\infty$-category $\GrSp$ of graded spectra. We will refer to the functor $\THH^{\mathrm{gr}}$ as graded topological Hochschild homology, and refer the reader to~\cite[Appendix A]{AMMN20} for the details of this construction. As in Notation~\ref{notation:graded}, there is a functor
\[
	\GrCycSp \to \CycSp
\]
determined by the construction $\{X_i\}_{i \geq 0} \mapsto \bigoplus_{i \geq 0} X_i$, and this functor preserves colimits. We have that the following diagram of $\infty$-categories commutes
\[
\begin{tikzcd}
	\Alg(\GrSp) \arrow{d}{\mathrm{und}} \arrow{r}{\THH^{\gr}} & \GrCycSp \arrow{d}{\mathrm{und}} \\
	\Alg \arrow{r}{\THH} & \CycSp
\end{tikzcd}
\]
where we have used that the underlying functor $\GrSp \to \Sp$ of Notation~\ref{notation:graded} admits a canonical symmetric monoidal structure.

\begin{definition} \label{definition:graded_pieces}
Let $\{R_i\}_{i \geq 0}$ denote an $\E_1$-algebra in $\GrSp$ with $R \simeq \mathrm{und}(\{R_i\}_{i \geq 0})$, and define
\[
	\THH(R)_i = \mathrm{ev}_i \THH^{\gr}(\{R_i\}_{i \geq 0})
\]
for every integer $i \geq 0$. By definition, there is an equivalence of cyclotomic spectra
\[
	\THH(R) \simeq \mathrm{und} \THH^{\gr}(\{R_i\}_{i \geq 0}) \simeq \bigoplus_{i \geq 0} \THH(R)_i.
\]
\end{definition}

\begin{example} \label{example:picture_gradings}
There is an isomorphism of graded rings
\[
	\THH_\ast(\S[t]) \simeq \S_\ast[t, dt]/(dt^2 = \eta t dt),
\]
where $\eta \in \pi_1 \S$ denotes the Hopf element and $|dt| = 1$. We may regard $\THH_\ast(\S[t])$ as a graded abelian group with $w(t) = w(dt) = 1$ using that the underlying $\E_\infty$-ring of $\THH^{\mathrm{gr}}(\Sigma^\infty \Pi_\infty^{w(t)=1})$ is equivalent to $\THH(\S[t])$. We have the following picture:
\[
\begin{tikzpicture}
	\node[draw, dotted] at (0, 0) {$1$};
	\node[draw, dotted] at (0.8, 0) {$t$};
	\node[draw, dotted] at (1.6, 0) {$t^2$};
	\node[draw, dotted] at (2.4, 0) {$t^3$};
	\node at (3.2, 0) {$\ldots$};

	\node at (0, 0.8) {$0$};
	\node[draw, dotted] at (0.8, 0.8) {$dt$};
	\node[draw, dotted] at (1.6, 0.8) {$t dt$};
	\node[draw, dotted] at (2.4, 0.8) {$t^2 dt$};
	\node at (3.2, 0.8) {$\ldots$};

	\node at (0, -1) {$0$};
	\node at (0.8, -1) {$1$};
	\node at (1.6, -1) {$2$};
	\node at (2.4, -1) {$3$};
	\node at (3.2, -1) {$\ldots$};

	\node at (-1, 0) {$0$};
	\node at (-1, 0.8) {$1$};

	\draw[->] (-0.5, -0.5) -- (4, -0.5) node[anchor=north west] {$w$};
	\draw[->] (-0.5, -0.5) -- (-0.5, 1.5) node[anchor=south east] {degree};
\end{tikzpicture}
\]
Alternatively, we may regard $\THH_\ast(\S[t])$ as a graded abelian group with $w(t) = w(dt) = n$, and in this case we have the following picture:
\[
\begin{tikzpicture}
	\node[draw, dotted] at (0, 0) {$1$};
	\node at (0.8, 0) {$0$};
	\node at (1.6, 0) {$\ldots$};
	\node at (2.4, 0) {$0$};
	\node[draw, dotted] at (3.2, 0) {$t$};
	\node at (4, 0) {$0$};
	\node at (4.8, 0) {$\ldots$};
	\node at (5.6, 0) {$0$};
	\node[draw, dotted] at (6.4, 0) {$t^2$};
	\node at (7.2, 0) {$0$};
	\node at (8, 0) {$\ldots$};

	\node at (0, 0.8) {$0$};
	\node at (0.8, 0.8) {$0$};
	\node at (1.6, 0.8) {$\ldots$};
	\node at (2.4, 0.8) {$0$};
	\node[draw, dotted] at (3.2, 0.8) {$dt$};
	\node at (4, 0.8) {$0$};
	\node at (4.8, 0.8) {$\ldots$};
	\node at (5.6, 0.8) {$0$};
	\node[draw, dotted] at (6.4, 0.8) {$t dt$};
	\node at (7.2, 0.8) {$0$};
	\node at (8, 0.8) {$\ldots$};

	\node at (0, -1) {$0$};
	\node at (0.8, -1) {$1$};
	\node at (1.6, -1) {$\ldots$};
	\node at (2.4, -1) {$n-1$};
	\node at (3.2, -1) {$n$};
	\node at (4, -1) {$n+1$};
	\node at (4.8, -1) {$\ldots$};
	\node at (5.6, -1) {$2n-1$};
	\node at (6.4, -1) {$2n$};
	\node at (7.2, -1) {$2n+1$};
	\node at (8, -1) {$\ldots$};

	\node at (-1, 0) {$0$};
	\node at (-1, 0.8) {$1$};

	\draw[->] (-0.5, -0.5) -- (9, -0.5) node[anchor=north west] {$w$};
	\draw[->] (-0.5, -0.5) -- (-0.5, 1.5) node[anchor=south east] {degree};
\end{tikzpicture}
\]
\end{example}

As in Definition~\ref{definition:graded_pieces} above, there is an equivalence of cyclotomic spectra
\[
	\THH(\S[t]/t^n) \simeq \mathrm{und} \THH^{\gr}(\Sigma^\infty \Pi_n^{w(t) = 1}) \simeq \bigoplus_{i \geq 0} \THH(\S[t]/t^n)_i.
\]
Hesselholt--Madsen~\cite{HM97b} determine the $\T$-equivariant homotopy type of $\THH(\S[t]/t^n)_i$ for every $i \geq 0$ by analyzing the facet structure of regular cyclic polytopes. Using Example~\ref{example:picture_gradings} above, we obtain the following special case of the result of Hesselholt--Madsen. 

\begin{proposition} \label{proposition:identification_THH_i}
For every integer $n \geq 1$, there is an equivalence of spectra with $\T$-action
\[
	\THH(\S[t]/t^n) \simeq \bigoplus_{i \geq 0} \THH(\S[t]/t^n)_i,
\]
where $\THH(\S[t]/t^n)_0 \simeq \S$ and $\THH(\S[t]/t^n)_i \simeq \Sigma^\infty_+(S^1/C_i)$ for $1 \leq i \leq n-1$.
\end{proposition}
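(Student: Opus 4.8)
The plan is to exploit the pushout square of $\E_\infty$-algebras in graded spectra displayed above,
\[
\begin{tikzcd}
	\S[s] \arrow{r}{s \mapsto t^n} \arrow{d}{s \mapsto 0} & \S[t] \arrow{d} \\
	\S \arrow{r} & \S[t]/t^n
\end{tikzcd}
\]
with $w(s)=n$ and $w(t)=1$. Since $\THH^{\gr}$ is symmetric monoidal for the coproduct symmetric monoidal structure on $\E_\infty$-algebras in $\GrSp$ — concretely, $\THH^{\gr}(R)\simeq S^1\otimes R$ is a copower, hence preserves colimits and in particular pushouts of $\E_\infty$-algebras (see~\cite[Appendix A]{AMMN20}) — applying $\THH^{\gr}$ gives an equivalence of graded cyclotomic spectra, and a fortiori of graded spectra with $\T$-action,
\[
	\THH^{\gr}(\S[t]/t^n) \simeq \S \otimes_{\THH^{\gr}(\S[s])} \THH^{\gr}(\S[t]).
\]
In particular the map $\S[t]\to\S[t]/t^n$ induces a natural map $\THH(\S[t])_i \to \THH(\S[t]/t^n)_i$ of spectra with $\T$-action for every $i\geq 0$, and I claim it is an equivalence in the range $0\leq i\leq n-1$.

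The key computational input is the weight support of $\THH^{\gr}(\S[s])$. As $\S[s]$ has its generator in weight $n$, there is an equivalence $\THH^{\gr}(\S[s]) \simeq \Sigma^\infty_+ \B^{\cyc}\Z_{\geq 0}$ whose weight-$0$ summand is $\S$ and whose weight-$j$ summand vanishes unless $n\mid j$ (cf.\ Example~\ref{example:epibarconstruction_N} and the second picture in Example~\ref{example:picture_gradings}). Now compute the relative tensor product above by the two-sided bar resolution $[m]\mapsto \S\otimes\THH^{\gr}(\S[s])^{\otimes m}\otimes\THH^{\gr}(\S[t])$. In weight $i\leq n-1$ every tensor factor $\THH^{\gr}(\S[s])$ is forced into weight $0$, where it is the unit $\S$, and the outer factor $\S$ is concentrated in weight $0$; hence the weight-$i$ part of this simplicial object is constant with value $\THH^{\gr}(\S[t])_i$, all face and degeneracy maps being equivalences. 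Passing to geometric realization, the comparison map $\THH(\S[t])_i \to \THH(\S[t]/t^n)_i$ is an equivalence of spectra with $\T$-action for $0\leq i\leq n-1$.

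It remains to identify $\THH(\S[t])_i$. The equivalence of cyclotomic spectra $\widetilde{\THH}(\S[t]) \simeq \Sigma^\infty_+\widetilde{\B}^{\epi}\Z_{\geq 0} \simeq \bigoplus_{i\geq 1}\Sigma^\infty_+(S^1/C_i)$ from Example~\ref{example:epibarconstruction_N} is compatible with the weight grading, the summand $S^1/C_i$ sitting in weight $i$; together with $\THH(\S[t])_0\simeq\S$ this yields $\THH(\S[t]/t^n)_0\simeq\S$ and $\THH(\S[t]/t^n)_i\simeq\Sigma^\infty_+(S^1/C_i)$ for $1\leq i\leq n-1$, as claimed. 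There is no serious obstacle here: the only point requiring care is keeping the weight grading and the residual $\T$-action aligned through the bar resolution, which is harmless since the entire argument takes place in $\GrCycSp$. Alternatively, and perhaps more transparently, one can run the argument directly on the cyclic pointed monoid $\Pi_n$: the cyclic bar construction $\B^{\cyc}\Pi_n$ decomposes by the total power of $t$, and in total weight $i\leq n-1$ the relation $t^n=0$ never intervenes in the face maps, so $\B^{\cyc}\Pi_n$ and $\B^{\cyc}\Pi_\infty$ agree as cyclic pointed spaces in weights $\leq n-1$; applying $\Sigma^\infty$ and the Connes $\T$-action then gives the same conclusion.
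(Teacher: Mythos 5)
Your proposal is correct and follows essentially the same route as the paper: both proofs apply $\THH^{\gr}$ to the graded pushout square $\S[s]\to\S[t]\to\S[t]/t^n$ (with $w(s)=n$, $w(t)=1$), use that $\THH^{\gr}(\S[s])$ is the unit $\S$ in weight $0$ and vanishes in weights $1,\dots,n-1$ (the second picture in Example~\ref{example:picture_gradings}), and conclude that $\THH(\S[t]/t^n)_i\simeq\THH(\S[t])_i$ for $0\leq i\leq n-1$, the latter identified via $\widetilde{\THH}(\S[t])\simeq\bigoplus_{i\geq 1}\Sigma^\infty_+(S^1/C_i)$. The only difference is presentational: where the paper leaves the weight-truncation step as an appeal to Example~\ref{example:picture_gradings}, you spell it out by inspecting the two-sided bar resolution computing the relative tensor product, noting that in weight $i\leq n-1$ every $\THH^{\gr}(\S[s])$-factor collapses to the unit and the simplicial object degenerates to a constant one — a useful unpacking that makes the low-weight equivalence manifest. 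Your alternative closing remark about comparing $\B^{\cyc}\Pi_n$ and $\B^{\cyc}\Pi_\infty$ directly in low total weight is also a valid, slightly more hands-on way to see the same thing. No gaps.
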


\begin{proof}
It remains to show that $\THH(\S[t])_0 \simeq \S$ and $\THH(\S[t]/t^n)_i \simeq \Sigma^\infty_+(S^1/C_i)$ for $1 \leq i \leq n-1$ by the discussion above. There is an equivalence of cyclotomic spectra
\[
	\THH(\S[t]) \simeq \mathrm{und} \THH^{\gr}(\Sigma^\infty \Pi_\infty^{w(t) = 1}) \simeq \bigoplus_{i \geq 0} \THH(\S[t])_i,
\]
where $\THH(\S[t])_0 \simeq \S$ and $\THH(\S[t])_i \simeq \Sigma^\infty_+(S^1/C_i)$ for every $i \geq 1$, thus we want to prove that there is an equivalence of spectra with $\T$-action
\[
	\THH(\S[t]/t^n)_i \simeq \THH(\S[t])_i
\]
for every $0 \leq i \leq n-1$. Note that the square
\[
\begin{tikzcd}
	\THH^{\mathrm{gr}}(\S[s]) \arrow{r}{s \mapsto t^n} \arrow{d}{s \mapsto 0} & \THH^{\mathrm{gr}}(\S[t]) \arrow{d} \\
	\S \arrow{r} & \THH^{\mathrm{gr}}(\S[t]/t^n)
\end{tikzcd}
\]
is a pushout in the $\infty$-category of graded cyclotomic spectra, where $w(s) = n$ and $w(t) = 1$. Indeed, it suffices to prove that the square is a pushout in the $\infty$-category of cyclotomic spectra after applying the underlying functor $\GrCycSp \to \CycSp$. This is a consequence of Lemma~\ref{lemma:pushout_pointed_monoids} since both $\mathrm{und} : \Alg(\GrSp) \to \Alg$ and $\THH : \Alg \to \CycSp$ preserve pushouts. Using Example~\ref{example:picture_gradings}, we conclude that there is an equivalence of spectra with $\T$-action
\[
	\THH(\S[t]/t^n)_i \simeq \THH(\S[t])_i
\]
for every $0 \leq i \leq n-1$ as wanted. 
\end{proof}

We will not need to determine the $\T$-equivariant homotopy type of $\THH(\S[t]/t^n)_i$ for $i \geq n$ as in Hesselholt--Madsen~\cite{HM97b} (see Remark~\ref{remark:decomposition_over_S}). Instead, we will only need a connectivity estimate for $\THH(\S[t]/t^n)_i$ for $i \geq n$, which we deduce from a calculation of the Hochschild homology groups of truncated polynomial rings over the integers. This computation was previously obtained by Guccione--Guccione--Redondo--Solotar--Villamayor~\cite{GGRSV91}. We will let $\Z\langle y\rangle$ denote the free divided power algebra which has generators $y^{[1]}, y^{[2]}, \ldots$ with $y = y^{[1]}$ and the relations that $y^{[i]}y^{[j]} = \binom{i+j}{i} y^{[i+j]}$ for every pair of positive integers $i$ and $j$. We have the following result:

\begin{proposition} \label{proposition:HH_truncated_dga}
For every integer $n \geq 1$, the Hochschild homology $\HH(\Z[t]/t^n)$ is equivalent to the $\E_1$-algebra given by the following differential graded algebra
\[
	(\Z[t]/t^n \otimes \Lambda(dt) \otimes \Z\langle y \rangle, \partial),
\]
with $|dt| = 1$ and $|y| = 2$, whose differential is determined by $\partial(y^{[i]}) = nt^{n-1}y^{[i-1]} dt$ and $\partial(dt) = 0$. 
\end{proposition}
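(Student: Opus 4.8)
The plan is to compute $\HH(\Z[t]/t^n)$ by resolving $\Z[t]/t^n$ as a module over its enveloping algebra $(\Z[t]/t^n)^{\otimes 2}$ and then taking the tensor product with $\Z[t]/t^n$. Write $A = \Z[t]/t^n$ and $A^e = A \otimes_\Z A$, so that $\HH(A) \simeq A \otimes^{\mathbf{L}}_{A^e} A$ as an $\E_1$-algebra. First I would recall the classical free resolution of $A$ over $A^e$: since $A = \Z[t]/(t^n)$ is a quotient of the polynomial ring by a single element which is a non-zero-divisor, one has a two-periodic-up-to-twist resolution built from the Koszul complex on $t \otimes 1 - 1 \otimes t$ together with the "norm" element $\sum_{i+j = n-1} t^i \otimes t^j$. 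Concretely, the resolution is the complex of free $A^e$-modules $\cdots \to A^e \xrightarrow{\cdot N} A^e \xrightarrow{\cdot(t\otimes 1 - 1 \otimes t)} A^e \xrightarrow{\cdot N} A^e \xrightarrow{\cdot(t\otimes 1 - 1\otimes t)} A^e \to A \to 0$, where $N = \sum_{i=0}^{n-1} t^i \otimes t^{n-1-i}$. This is the standard $2$-periodic resolution for a truncated polynomial algebra; I would cite Guccione--Guccione--Redondo--Solotar--Villamayor~\cite{GGRSV91} or check the identities $N \cdot (t\otimes 1 - 1\otimes t) = t^n \otimes 1 - 1 \otimes t^n = 0$ in $A^e$ directly.

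Next I would tensor this resolution over $A^e$ with $A$. Tensoring $A^e$ with $A$ over $A^e$ gives $A$; the map $\cdot(t \otimes 1 - 1 \otimes t)$ becomes multiplication by $t - t = 0$; and the map $\cdot N$ becomes multiplication by $\sum_{i=0}^{n-1} t^i \cdot t^{n-1-i} = n t^{n-1}$. Therefore $\HH_*(A)$ is the homology of the complex $\cdots \xrightarrow{0} A \xrightarrow{\cdot n t^{n-1}} A \xrightarrow{0} A \xrightarrow{\cdot n t^{n-1}} A$. To package this with the correct multiplicative structure, I would identify this complex with the CDGA $(A \otimes \Lambda(dt) \otimes \Z\langle y\rangle, \partial)$ asserted in the statement: the exterior generator $dt$ in degree $1$ accounts for the Koszul (odd) part of the resolution, and the divided power generator $y$ in degree $2$ accounts for the $2$-periodicity coming from the norm map, with $\partial(y^{[i]}) = n t^{n-1} y^{[i-1]} dt$ exactly encoding the map $\cdot n t^{n-1}$ shifting degree by $2$ and picking up the Koszul class $dt$. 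One checks $\partial^2 = 0$ using $\partial(dt) = 0$ and that $\partial$ is a derivation with respect to the divided power structure, and that the resulting homology in each internal degree matches the homology of the two-term-periodic complex above. The fact that this is an equivalence of $\E_1$-algebras, and not merely of graded abelian groups, follows because the bar resolution computing $\HH$ carries its natural algebra structure and the comparison with the small resolution above can be made multiplicative (alternatively: Hochschild homology of a commutative ring is an $\E_\infty$-algebra, and a CDGA model suffices to pin down the $\E_1$-structure).

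The main obstacle I expect is verifying that the small complex genuinely computes the derived tensor product $A \otimes^{\mathbf{L}}_{A^e} A$ with its multiplicative structure intact — i.e.\ exhibiting an explicit comparison (a chain-homotopy equivalence) between the bar complex $A^{\otimes \bullet}$ and the $2$-periodic resolution, compatibly with the algebra structures. For the underlying complex this is routine, but tracking the divided-power algebra structure and the precise form of the differential $\partial(y^{[i]}) = n t^{n-1} y^{[i-1]} dt$ (rather than, say, $\partial(y^i) = i n t^{n-1} y^{i-1} dt$, which would be wrong integrally) requires care: the point is that $y^{[i]}$ must be chosen so that the comparison map is strictly multiplicative, which forces the divided power normalization. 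I would handle this by constructing the comparison quasi-isomorphism explicitly in low degrees and invoking the standard comparison theorem for projective resolutions to extend it, then checking the derivation identity on generators. Everything else — computing $\partial(dt) = 0$, the internal gradings $|dt| = 1$, $|y| = 2$, and the identification $N \mapsto n t^{n-1}$ after base change — is a direct calculation.
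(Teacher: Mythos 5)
Your route via the classical $2$-periodic $A^e$-free resolution of $A = \Z[t]/t^n$ is genuinely different from the paper's, and the additive computation is correct: tensoring the periodic resolution with $A$ over $A^e$ does produce the complex $\cdots \to A \xrightarrow{\cdot nt^{n-1}} A \xrightarrow{0} A \xrightarrow{\cdot nt^{n-1}} A \xrightarrow{0} A$, whose homology agrees degreewise with that of the asserted CDGA. However, there is a genuine gap in the step that upgrades this to an identification of $\E_1$-algebras, and the method you propose for filling it does not work as stated. The comparison theorem for projective resolutions produces a chain map between the periodic resolution and the bar resolution, unique up to chain homotopy; it does not produce a map of DG algebras, and a chain-homotopy equivalence of underlying complexes carries no information about multiplication. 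The periodic complex of $A^e$-modules also carries no natural DGA structure, so there is no a priori target for a ``multiplicative comparison.'' In particular the divided-power normalization $\partial(y^{[i]}) = nt^{n-1}y^{[i-1]}dt$ (rather than $\partial(y^{i}) = int^{n-1}y^{i-1}dt$) is precisely the datum that the additive computation cannot see, and ``checking the derivation identity on generators'' presupposes you have already produced an algebra map on which to check it.

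The paper avoids this by never leaving the world of CDGAs. Since $\HH$ preserves pushouts of commutative algebras, $\HH(\Z[t]/t^n)$ is the pushout $\Z \otimes_{\HH(\Z[s])} \HH(\Z[t])$, and by HKR this is a pushout of the explicit CDGAs $\Z \leftarrow \Z[s]\otimes\Lambda(ds) \to \Z[t]\otimes\Lambda(dt)$ along $s\mapsto t^n$, $ds \mapsto nt^{n-1}dt$. The corner $\Z$ is then replaced by the Koszul--Tate CDGA $\Z[s]\otimes\Lambda(ds)\otimes\Lambda(\varepsilon)\otimes\Z\langle y\rangle$ with $\partial\varepsilon = s$ and $\partial y^{[i]} = y^{[i-1]}ds$, a cofibrant replacement in which the divided powers arise by necessity in the course of successively killing homology classes. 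Because $\Z[s]\to\Z[t]$ is flat, the derived pushout is then computed by an underived tensor product of CDGAs, and the multiplicative structure is manifest throughout, with $\varepsilon$ dropping out since $\partial\varepsilon = t^n$ is a non-zero-divisor in $\Z[t]$. If you want to salvage your approach, you would need to realize the periodic resolution itself as arising from a multiplicative (CDGA or divided-power algebra) resolution of $A$ over $A^e$; at that point you would essentially be reproducing the paper's argument in different notation.
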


\begin{proof}
The following square is a pushout of $\E_\infty$-algebras over $\Z$
\[
\begin{tikzcd}
	\HH(\Z[s]) \arrow{r}{s \mapsto t^n} \arrow{d}{s \mapsto 0} & \HH(\Z[t]) \arrow{d} \\
	\Z \arrow{r} & \HH(\Z[t]/t^n)
\end{tikzcd}
\]
since the functor $- \otimes \Z : \CAlg \to \CAlg_{\Z}$ preserves colimits and $\THH(\S[t]/t^n) \otimes \Z \simeq \HH(\Z[t]/t^n)$. Recall that the pushout of $\E_\infty$-algebras over $\Z$ above is calculated by the relative tensor product. The Hochschild homology $\HH(\Z[s])$ is $1$-truncated since $\HH_\ast(\Z[s]) \simeq \mathrm{H}_\ast(\Z[s] \otimes \Lambda(ds))$ by virtue of the Hochschild--Kostant--Rosenberg theorem, and the map $\HH_\ast(\Z[s]) \to \HH_\ast(\Z[t])$ is given by $s \mapsto t^n$ and $ds \mapsto nt^{n-1} dt$. The Hochschild homology $\HH(\Z[s])$ is equivalent to the $1$-truncation of the free $\E_1$-$\Z$-algebra on the generators $s$ and $ds$, so we conclude that $\HH(\Z[s]) \simeq \Z[s] \otimes \Lambda(ds)$. Additionally, it follows from the homology statement above that the map $\HH(\Z[s]) \to \HH(\Z[t])$ is given by $s \mapsto t^n$ and $ds \mapsto nt^{n-1}dt$ under these identifications since there is an equivalence
\[
	\Map_{\tau_{\leq 1} \Alg(\Sp)}(\HH(\Z[s]), \HH(\Z[t])) \simeq \Map_{\Alg(\Sp)}(\HH(\Z[s]), \HH(\Z[t])),
\]
where $\tau_{\leq 1} \Alg(\Sp)$ denotes the full subcategory of $\Alg(\Sp)$ spanned by those $\E_1$-rings which are $1$-truncated. In other words, we conclude that $\HH(\Z[t]/t^n)$ is given by the following pushout
\[
\begin{tikzcd}[column sep = large]
	\Z[s] \otimes \Lambda(ds) \arrow{d}{s \mapsto 0} \arrow{rr}{s \mapsto t^n,\,\, ds \mapsto nt^{n-1}dt} & & \Z[t] \otimes \Lambda(dt) \arrow{d} \\ 
	\Z \arrow{rr} & & \HH(\Z[t]/t^n)
\end{tikzcd}
\]
Note that the $\Z$ appearing in the lower left corner of the pushout square above is equivalent to the commutative differential graded algebra given by $\Z[s] \otimes \Lambda(ds) \otimes \Lambda(\varepsilon) \otimes \Z\langle y\rangle$
with $|ds|=|\varepsilon|=1$ and $|y|=2$, whose differential is determined by $\partial(\varepsilon) = s$ and $\partial(y^{[i]}) = y^{[i-1]} ds$. Since the ring homomorphism $\Z[s] \to \Z[t]$ given by $s \mapsto t^n$ is flat, we conclude that $\HH(\Z[t]/t^n)$ is given by the pushout of the following diagram of commutative differential graded algebras
\[
\begin{tikzcd}[column sep = large]
	\Z[s] \otimes \Lambda(ds) \arrow{r}{s \mapsto t^n,\,\, ds \mapsto nt^{n-1}dt} \arrow{d} & \Z[t] \otimes \Lambda(dt) \\
	\Z[s] \otimes \Lambda(ds) \otimes \Lambda(\varepsilon) \otimes \Z\langle y\rangle &
\end{tikzcd}
\]
which is equivalent to the commutative differential graded algebra given by $\Z[t]/t^n \otimes \Lambda(dt) \otimes \Z\langle y\rangle$ with $|dt|=1$ and $|y|=2$, whose differential is determined by $\partial(dt) = 0$ and $\partial(y^{[i]}) = nt^{n-1}y^{[i-1]}dt$, where we have used that $\varepsilon$ does not contribute to the pushout since $\Z[s] \to \Z[t]$ given by $s \mapsto t^n$ is injective and $\partial(\varepsilon) = s$. 
\end{proof}

There is an equivalence of spectra with $\T$-action
\[
	\HH(\Z[t]/t^n) \simeq \bigoplus_{i \geq 0} \HH(\Z[t]/t^n)_i,
\]
where $\HH(\Z[t]/t^n)_i \simeq \THH(\S[t]/t^n)_i \otimes \Z$. Using Proposition~\ref{proposition:HH_truncated_dga}, we may regard $\HH_\ast(\Z[t]/t^n)$ as a graded abelian group with $w(t) = w(dt) = 1$ and $w(y) = n$. As in Example~\ref{example:picture_gradings} above, we have the following picture:
\[
\begin{tikzpicture}[scale=1.05]
	\node[draw, dotted] at (0, 0) {$1$};
	\node[draw, dotted] at (1.3, 0) {$t$};
	\node at (2.6, 0) {$\ldots$};
	\node[draw, dotted] at (3.9, 0) {$t^{n-1}$};
	\node at (5.2, 0) {$0$};
	\node at (6.5, 0) {$0$};
	\node at (7.8, 0) {$\ldots$};
	\node at (9.1, 0) {$0$};
	\node at (10.4, 0) {$0$};
	\node at (11.7, 0) {$0$};
	\node at (13, 0) {$\ldots$};

	\node at (0, 1) {$0$};
	\node[draw, dotted] at (1.3, 1) {$dt$};
	\node at (2.6, 1) {$\ldots$};
	\node[draw, dotted] at (3.9, 1) {$t^{n-2} dt$};
	\node[draw, dotted] at (5.2, 1) {$t^{n-1} dt$};
	\node at (6.5, 1) {$0$};
	\node at (7.8, 1) {$\ldots$};
	\node at (9.1, 1) {$0$};
	\node at (10.4, 1) {$0$};
	\node at (11.7, 1) {$0$};
	\node at (13, 1) {$\ldots$};

	\node at (0, 2) {$0$};
	\node at (1.3, 2) {$0$};
	\node at (2.6, 2) {$\ldots$};
	\node at (3.9, 2) {$0$};
	\node[draw, dotted] at (5.2, 2) {$y$};
	\node[draw, dotted] at (6.5, 2) {$y t$};
	\node at (7.8, 2) {$\ldots$};
	\node[draw, dotted] at (9.1, 2) {$yt^{n-1}$};
	\node at (10.4, 2) {$0$};
	\node at (11.7, 2) {$0$};
	\node at (13, 2) {$\ldots$};

	\node at (0, 3) {$0$};
	\node at (1.3, 3) {$0$};
	\node at (2.6, 3) {$\ldots$};
	\node at (3.9, 3) {$0$};
	\node at (5.2, 3) {$0$};
	\node[draw, dotted] at (6.5, 3) {$y dt$};
	\node at (7.8, 3) {$\ldots$};
	\node[draw, dotted] at (9.1, 3) {$yt^{n-2} dt$};
	\node[draw, dotted] at (10.4, 3) {$yt^{n-1} dt$};
	\node at (11.7, 3) {$0$};
	\node at (13, 3) {$\ldots$};

	\node at (0, 4) {$0$};
	\node at (1.3, 4) {$0$};
	\node at (2.6, 4) {$\ldots$};
	\node at (3.9, 4) {$0$};
	\node at (5.2, 4) {$0$};
	\node at (6.5, 4) {$0$};
	\node at (7.8, 4) {$\ldots$};
	\node at (9.1, 4) {$0$};
	\node[draw, dotted] at (10.4, 4) {$y^{[2]}$};
	\node[draw, dotted] at (11.7, 4) {$y^{[2]} t$};
	\node at (13, 4) {$\ldots$};

	\node at (0, 5) {$0$};
	\node at (1.3, 5) {$0$};
	\node at (2.6, 5) {$\ldots$};
	\node at (3.9, 5) {$0$};
	\node at (5.2, 5) {$0$};
	\node at (6.5, 5) {$0$};
	\node at (7.8, 5) {$\ldots$};
	\node at (9.1, 5) {$0$};
	\node at (10.4, 5) {$0$};
	\node[draw, dotted] at (11.7, 5) {$y^{[2]} dt$};
	\node at (13, 5) {$\ldots$};

	\node at (0, 6) {$\vdots$};
	\node at (1.3, 6) {$\vdots$};
	\node at (2.6, 6) {$\ldots$};
	\node at (3.9, 6) {$\vdots$};
	\node at (5.2, 6) {$\vdots$};
	\node at (6.5, 6) {$\vdots$};
	\node at (7.8, 6) {$\ldots$};
	\node at (9.1, 6) {$\vdots$};
	\node at (10.4, 6) {$\vdots$};
	\node at (11.7, 6) {$\vdots$};
	\node at (13, 6) {$\ldots$};

	\node at (0, -1) {$0$};
	\node at (1.3, -1) {$1$};
	\node at (2.6, -1) {$\ldots$};
	\node at (3.9, -1) {$n-1$};
	\node at (5.2, -1) {$n$};
	\node at (6.5, -1) {$n+1$};
	\node at (7.8, -1) {$\ldots$};
	\node at (9.1, -1) {$2n-1$};
	\node at (10.4, -1) {$2n$};
	\node at (11.7, -1) {$2n+1$};
	\node at (13, -1) {$\ldots$};

	\node at (-1, 0) {$0$};
	\node at (-1, 1) {$1$};
	\node at (-1, 2) {$2$};
	\node at (-1, 3) {$3$};
	\node at (-1, 4) {$4$};
	\node at (-1, 5) {$5$};
	\node at (-1, 6) {$\vdots$};

	\draw[->] (-0.5, -0.5) -- (13.5, -0.5) node[anchor=north west] {$w$};
	\draw[->] (-0.5, -0.5) -- (-0.5, 6.5) node[anchor=south east] {deg};
\end{tikzpicture}
\]

As a consequence, we obtain the following result from the diagram above, which was previously obtained by Hesselholt--Madsen~\cite[\textsection 7.3]{HM97} and Hesselholt~\cite[Lemma 3.1.6]{Hes96}. 

\begin{corollary} \label{corollary:coconnectivity}
For every $n \geq 1$ and every $i \geq 1$, the spectrum $\THH(\S[t]/t^n)_i$ is $2\ell$-connective, where $\ell = \lfloor \frac{i-1}{n} \rfloor$ denotes the largest integer less than $\frac{i-1}{n}$. 
\end{corollary}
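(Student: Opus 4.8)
The plan is to reduce the statement to a connectivity estimate for the weight-$i$ part of the integral Hochschild homology $\HH(\Z[t]/t^n)$, which can then be read off directly from the explicit differential graded model furnished by Proposition~\ref{proposition:HH_truncated_dga}. The first observation I would make is that $\THH(\S[t]/t^n)$ is connective: since $\S[t]/t^n$ is a connective $\E_\infty$-ring, the cyclic bar construction defining $\THH(\S[t]/t^n)$ is a geometric realization of connective spectra, and hence each graded piece $\THH(\S[t]/t^n)_i$ is connective, being a retract of $\THH(\S[t]/t^n) \simeq \bigoplus_{i \geq 0} \THH(\S[t]/t^n)_i$.

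Next I would record the following elementary fact: if $X$ is a connective spectrum such that $X \otimes \Z$ is $k$-connective, then $X$ is itself $k$-connective. Indeed, if $j$ denotes the least integer with $\pi_j X \neq 0$, then $X$ is $(j-1)$-connected, so the Hurewicz theorem for connective spectra gives $\pi_j(X \otimes \Z) \cong H_j(X; \Z) \cong \pi_j X \neq 0$, whence $j \geq k$. Applying this with $X = \THH(\S[t]/t^n)_i$, and using the equivalence $\THH(\S[t]/t^n)_i \otimes \Z \simeq \HH(\Z[t]/t^n)_i$ recorded above, the problem reduces to showing that $\HH(\Z[t]/t^n)_i$ is $2\ell$-connective.

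For this last point, by Proposition~\ref{proposition:HH_truncated_dga} the spectrum $\HH(\Z[t]/t^n)$ is computed by the commutative differential graded algebra $\Z[t]/t^n \otimes \Lambda(dt) \otimes \Z\langle y \rangle$ with $|dt| = 1$, $|y| = 2$ and grading weights $w(t) = w(dt) = 1$, $w(y) = n$; its weight-$i$ part is the subcomplex spanned by the monomials $t^a (dt)^\epsilon y^{[m]}$ with $0 \leq a \leq n-1$, $\epsilon \in \{0, 1\}$, $m \geq 0$ and $a + \epsilon + mn = i$, such a monomial lying in homological degree $\epsilon + 2m$. From $a + \epsilon + mn = i$ together with $a \leq n-1$ and $\epsilon \leq 1$ one gets $mn \geq i - n$, so that $m \geq \lceil i/n \rceil - 1 = \lfloor (i-1)/n \rfloor = \ell$ (when $i \leq n$ this is just the bound $m \geq 0 = \ell$). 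Hence every monomial appearing in the weight-$i$ subcomplex sits in homological degree $\epsilon + 2m \geq 2\ell$, so that subcomplex is concentrated in degrees $\geq 2\ell$, and its homology $\HH(\Z[t]/t^n)_i$ is therefore $2\ell$-connective, which finishes the argument. The proof is essentially bookkeeping once Proposition~\ref{proposition:HH_truncated_dga} is available; the only step requiring genuine (if entirely standard) input is the passage from integral to spherical connectivity via the Hurewicz theorem in the second paragraph.
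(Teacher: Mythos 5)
Your proof is correct and takes essentially the same approach as the paper: both reduce to showing $\HH(\Z[t]/t^n)_i$ is $2\ell$-connective by reading degrees off the DGA model of Proposition~\ref{proposition:HH_truncated_dga}, and then lift this to $\THH(\S[t]/t^n)_i$ via Hurewicz using the base-change equivalence $\THH(\S[t]/t^n)_i \otimes \Z \simeq \HH(\Z[t]/t^n)_i$. You simply spell out the connectivity of the graded pieces and the Hurewicz step in more detail, and bound the degree from below where the paper records the exact pair of degrees in which the weight-$i$ complex sits.
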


\begin{proof}
It suffices to prove the assertion for $\HH(\Z[t]/t^n)_i$. Note that if $n$ divides $i$, then $\HH(\Z[t]/t^n)_i$ is concentrated in degree $2\ell + 1$ and $2\ell + 2$. If $n$ does not divide $i$, then $\HH(\Z[t]/t^n)_i$ is concentrated in degree $2\ell$ and $2\ell + 1$. We conclude that $\HH(\Z[t]/t^n)_i$ is $2\ell$-connective for every $i \geq 1$, which shows the desired statement by Hurewicz since $\THH(\S[t]/t^n)_i \otimes \Z \simeq \HH(\Z[t]/t^n)_i$. 
\end{proof}

\begin{remark} \label{remark:decomposition_over_S}
In this remark, we will identify the underlying spectrum of $\THH(\S[t]/t^n)$ using Proposition~\ref{proposition:HH_truncated_dga}. The homology of $\HH(\Z[t]/t^n)_{kn}$ is given by the homology of the complex
\[
	\cdots \to 0 \to \Z[2k] \xrightarrow{n} \Z[2k-1] \to 0 \to \cdots
\]
for every integer $k \geq 1$. It follows that there is an equivalence of spectra 
\[
	\THH(\S[t]/t^n)_{kn} \simeq \S/n[2k-1]
\]
since the Moore spectrum $\S/n$ is uniquely characterized by $\pi_\ast(\S/n \otimes \Z) \simeq \Z/n[0]$. Additionally, we recall that $\Sigma^\infty_+ S^1 \simeq \S \oplus \S[1]$. Using the picture following the proof of Proposition~\ref{proposition:HH_truncated_dga}, we conclude that there is an equivalence of spectra
\[
	\THH(\S[t]/t^n) \simeq \S \oplus \Big(\bigoplus_{i \geq 0} \Big(\bigoplus_{j = 1}^{n-1} (\S \oplus \S[1])[2i] \Big) \oplus \S/n[2i+1]\Big).
\]
In fact, it is possible to determine the underlying spectrum with $\T$-action of $\THH(\S[t]/t^n)$. Such an identification will be useful for computing the topological Hochschild homology of truncated polynomial rings over the integers or a perfectoid base ring.  
\end{remark}

\begin{example}   \label{example:cyclotomic_product}
In this example, we describe a cyclotomic structure on the product
\[
	\prod_{n \geq 1} X \otimes \Sigma^\infty_+(S^1/C_n),
\]
where $X$ denotes a cyclotomic spectrum whose underlying spectrum is bounded below as follows: Using~\cite[Lemma 2.11]{AN20}, we conclude that the canonical map of spectra with $\T$-action
\[
	\big( \prod_{n \geq 1} X \otimes \Sigma^\infty_+(S^1/C_n) \big)^{tC_p} \to \prod_{n \geq 1} (X \otimes \Sigma^\infty_+(S^1/C_n))^{tC_p}
\]
is an equivalence for every prime number $p$. As a consequence, we may regard the product above as a cyclotomic spectrum whose cyclotomic Frobenius is induced by the $\T$-equivariant map
\[
	X \otimes \Sigma^\infty_+(S^1/C_n) \xrightarrow{\id \otimes \psi_p} X \otimes \Sigma^\infty_+(S^1/C_{pn})^{tC_p} \xrightarrow{\varphi_p \otimes \id} X^{tC_p} \otimes \Sigma^\infty_+(S^1/C_{pn})^{tC_p} \xrightarrow{\ell} (X \otimes \Sigma^\infty_+(S^1/C_{pn}))^{tC_p}
\]
where $\ell$ denotes the map induced by the canonical lax symmetric monoidal structure of the Tate construction (see~\cite[Theorem I.3.1]{NS18}).
\end{example}

Finally, we obtain a convenient description of the cyclotomic structure of $\varprojlim (X \otimes \widetilde{\THH}(\S[t]/t^n))$ for every cyclotomic spectrum $X$ whose underlying spectrum is bounded below using the connectivity result obtained in Corollary~\ref{corollary:coconnectivity}. 

\begin{proposition} \label{proposition:cyclotomic_structure_limit}
There is an equivalence of cyclotomic spectra
\[
	\varprojlim (X \otimes \widetilde{\THH}(\S[t]/t^n)) \simeq \displaystyle\prod_{n \geq 1} X \otimes \Sigma^\infty_+(S^1/C_n)
\]
for every cyclotomic spectrum $X$ whose underlying spectrum is bounded below, where the cyclotomic structure on the product is described in Example~\ref{example:cyclotomic_product}. 
\end{proposition}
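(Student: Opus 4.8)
The plan is to lift the computation of the underlying spectrum with $\T$-action to an equivalence of cyclotomic spectra by working with the graded refinement of $\THH(\S[t]/t^n)$ and exploiting the connectivity estimate of Corollary~\ref{corollary:coconnectivity}. First I would note that the augmentation $\Pi_n^{w(t)=1}\to\{0,1\}$ induces a map $\THH^{\gr}(\S[t]/t^n)\to\S$ in $\GrCycSp$ (with $\S$ in weight $0$); let $\widetilde{\THH}^{\gr}(\S[t]/t^n)$ denote its fiber. This is a graded cyclotomic spectrum whose weight-$i$ piece for $i\geq1$ is $\THH(\S[t]/t^n)_i$ and whose graded Frobenius $\varphi_{p,i}$ carries weight $i$ into the $C_p$-Tate construction of weight $pi$. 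Tensoring with $X$, placed in weight $0$, yields a graded cyclotomic spectrum with weight-$i$ piece $X\otimes\THH(\S[t]/t^n)_i$ and Frobenius $\ell\circ(\varphi_p\otimes\varphi_{p,i})$ in the notation of Example~\ref{example:cyclotomic_product}; applying $\mathrm{und}$ recovers the cyclotomic spectrum $X\otimes\widetilde{\THH}(\S[t]/t^n)\simeq\bigoplus_{i\geq1}X\otimes\THH(\S[t]/t^n)_i$.

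Next I would replace the direct sum by a product. Since $X$ is bounded below and $\THH(\S[t]/t^n)_i$ is $2\lfloor(i-1)/n\rfloor$-connective by Corollary~\ref{corollary:coconnectivity}, the spectra $X\otimes\THH(\S[t]/t^n)_i$ are uniformly bounded below with connectivity tending to infinity as $i\to\infty$. Hence the canonical map $\bigoplus_{i\geq1}X\otimes\THH(\S[t]/t^n)_i\to\prod_{i\geq1}X\otimes\THH(\S[t]/t^n)_i$ is an equivalence of spectra, and — by~\cite[Lemma 2.11]{AN20}, exactly as in Example~\ref{example:cyclotomic_product} — the functor $(-)^{tC_p}$ commutes with this product. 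Therefore the equivalence refines to an equivalence of cyclotomic spectra $X\otimes\widetilde{\THH}(\S[t]/t^n)\simeq\prod_{i\geq1}X\otimes\THH(\S[t]/t^n)_i$, the right-hand side carrying the cyclotomic structure whose Frobenius on the $i$-th factor is $\ell\circ(\varphi_p\otimes\varphi_{p,i})$, landing in the Tate construction of the $pi$-th factor.

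Then I would analyze the transition maps of the tower in $n$ and pass to the limit. The truncation maps $\S[t]\to\S[t]/t^{n+1}\to\S[t]/t^n$ induce maps of graded cyclotomic spectra, and by (the proof of) Proposition~\ref{proposition:identification_THH_i} the maps $\THH(\S[t])_i\to\THH(\S[t]/t^m)_i$ are equivalences for all $i\leq m-1$; hence the transition map $X\otimes\THH(\S[t]/t^{n+1})_i\to X\otimes\THH(\S[t]/t^n)_i$ is an equivalence for $i\leq n-1$, so for each fixed $i$ the tower $\{X\otimes\THH(\S[t]/t^n)_i\}_n$ is cofinally constant with value $X\otimes\THH(\S[t])_i\simeq X\otimes\Sigma^\infty_+(S^1/C_i)$. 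Taking $\varprojlim_n$ in $\CycSp$ and commuting the limit past $\prod_i$ and past $(-)^{tC_p}$ — legitimate because the relevant towers in $n$ are cofinally constant and the products are uniformly bounded below — I obtain
\[
	\varprojlim_n\big(X\otimes\widetilde{\THH}(\S[t]/t^n)\big)\simeq\prod_{i\geq1}X\otimes\Sigma^\infty_+(S^1/C_i)
\]
as cyclotomic spectra, the Frobenius on the $i$-th factor being $\ell\circ(\varphi_p\otimes\psi_p)$ with $\psi_p\colon\Sigma^\infty_+(S^1/C_i)\to\Sigma^\infty_+(S^1/C_{pi})^{tC_p}$ the $p$-th root map of Example~\ref{example:epibarconstruction_N}; this is precisely the cyclotomic structure described in Example~\ref{example:cyclotomic_product}.

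The main obstacle is the bookkeeping in the second and third steps: one must verify that the passage from $\bigoplus_i$ to $\prod_i$, the commutation of $(-)^{tC_p}$ with these products, the commutation of $(-)^{tC_p}$ with the cofinally constant towers in $n$, and the interchange of $\varprojlim_n$ with $\prod_i$ are all simultaneously compatible with the cyclotomic Frobenius, so that the output is genuinely an equivalence of cyclotomic spectra realizing the Frobenius of Example~\ref{example:cyclotomic_product} rather than merely an equivalence of spectra with $\T$-action. The connectivity estimate of Corollary~\ref{corollary:coconnectivity} is the one essential input; everything else is a matter of unwinding the definitions of the graded cyclotomic Frobenius and of the lax symmetric monoidal structure on $(-)^{tC_p}$.
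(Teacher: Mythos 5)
Your proof is essentially correct and uses the same key ingredients as the paper's (the graded decomposition of Definition~\ref{definition:graded_pieces}, the connectivity estimate of Corollary~\ref{corollary:coconnectivity}, the stabilization of low weights from Proposition~\ref{proposition:identification_THH_i}, and \cite[Lemma 2.11]{AN20}), but the bookkeeping is organized differently, and I think it is worth flagging the comparison and one place where the justification is thinner than it should be.

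The paper introduces an intermediate object $\widetilde{\THH}(\S[t])_{<n} = \bigoplus_{i=1}^{n-1}\Sigma^\infty_+(S^1/C_i)$ with a truncated cyclotomic structure, constructs a tower map
$\varprojlim_n(X\otimes\widetilde{\THH}(\S[t]/t^n)) \to \varprojlim_n(X\otimes\widetilde{\THH}(\S[t])_{<n})$,
shows the cofiber is $\varprojlim_n\bigoplus_{i\geq n}(X\otimes\THH(\S[t]/t^n)_i)$ which vanishes by Corollary~\ref{corollary:coconnectivity} (via the diagonal $\N\to\N\times\N$ being initial), and then identifies the second limit — a tower of projections of finite direct sums — with the product. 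You instead swap $\bigoplus_i\leftrightarrow\prod_i$ for each fixed $n$, observe the towers $\{X\otimes\THH(\S[t]/t^n)_i\}_n$ are cofinally constant for each $i$, and commute $\varprojlim_n$ with $\prod_i$. Both approaches buy the same thing; the paper's intermediate object makes the ``cofiber vanishes'' step especially clean, while yours avoids defining an auxiliary cyclotomic structure.

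The one place where your writeup is looser than it should be is the justification ``legitimate because the relevant towers in $n$ are cofinally constant and the products are uniformly bounded below.'' The towers $\{\prod_i X\otimes\THH(\S[t]/t^n)_i\}_n$ themselves are \emph{not} cofinally constant (for fixed homotopy degree $k$, more and more weights contribute as $n\to\infty$), so one cannot simply invoke constancy to commute $(-)^{tC_p}$ past $\varprojlim_n$. What one should say is: compute both $(\varprojlim_n\prod_iA_{n,i})^{tC_p}$ and $\varprojlim_n(\prod_iA_{n,i})^{tC_p}$ by first pushing $(-)^{tC_p}$ through $\prod_i$ (uniformly bounded below, \cite[Lemma 2.11]{AN20}), then through $\varprojlim_n$ factorwise (where cofinal constancy of $\{A_{n,i}^{tC_p}\}_n$ is what is actually used), landing in $\prod_iA_{\infty,i}^{tC_p}$ in both cases; naturality then identifies the comparison map as an equivalence. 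This is the precise content of the paper's assertion that the forgetful functor $\CycSp\to\Sp^{\B\T}$ preserves these limits, and it is the essential point rather than ``bookkeeping'': without it the formula $\varprojlim_n\prod_i\simeq\prod_i\varprojlim_n$ holds only in $\Sp^{\B\T}$, not in $\CycSp$. You do correctly identify this as the part needing verification; I would just insist that the verification be carried out as above rather than waved at.
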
 

\begin{proof}
For every integer $n \geq 2$, we let $\widetilde{\THH}(\S[t])_{<n}$ denote the cyclotomic spectrum defined by
\[
	\widetilde{\THH}(\S[t])_{<n} = \bigoplus_{i = 1}^{n-1} \Sigma^\infty_+(S^1/C_i),
\]
whose cyclotomic Frobenius is induced by $\Sigma^\infty_+(S^1/C_i) \to \Sigma^\infty_+(S^1/C_{pi})^{hC_p}$ for $pi \leq n-1$, and by $\Sigma^\infty_+(S^1/C_i) \to 0$ for $pi \geq n$. By construction, the canonical projection
\[
	\widetilde{\THH}(\S[t]/t^n) \to \widetilde{\THH}(\S[t])_{< n}
\]
determines a map of cyclotomic spectra such that the following diagram commutes
\[
\begin{tikzcd}
	\widetilde{\THH}(\S[t]/t^{n+1}) \arrow{r} \arrow{d} & \widetilde{\THH}(\S[t])_{<n+1} \arrow{d} \\
	\widetilde{\THH}(\S[t]/t^{n}) \arrow{r} & \widetilde{\THH}(\S[t])_{<n}
\end{tikzcd}
\]
where the left vertical map is induced by the canonical map of pointed monoids $\Pi_{n+1} \to \Pi_n$, and the right vertical map is given by the projection. We obtain a map of cyclotomic spectra
\begin{equation} \label{equation:map_between_limits}
	\varprojlim (X \otimes \widetilde{\THH}(\S[t]/t^n)) \to \varprojlim (X \otimes \widetilde{\THH}(\S[t])_{<n})
\end{equation}
for every cyclotomic spectrum $X$ whose underlying spectrum is bounded below, and we show that this map is an equivalence of cyclotomic spectra. First note that the forgetful functor $\CycSp \to \Sp^{\B\T}$ preserves both of the limits appearing in~(\ref{equation:map_between_limits}) by virtue of Corollary~\ref{corollary:coconnectivity} and the assumption that the underlying spectrum of $X$ is bounded below. Consequently, it suffices to show that the map of spectra with $\T$-action
\[
	\varprojlim (X \otimes \widetilde{\THH}(\S[t])_{<n}) \to \varprojlim (X \otimes \widetilde{\THH}(\S[t]/t^n))
\]
induced by the inclusion $\widetilde{\THH}(\S[t])_{<n} \hookrightarrow \widetilde{\THH}(\S[t]/t^n)$ is an equivalence. There is a commutative diagram of cofiber sequences of spectra with $\T$-action
\[
\begin{tikzcd}
	X \otimes \widetilde{\THH}(\S[t])_{<n+1} \arrow{r} \arrow{d} & X \otimes \widetilde{\THH}(\S[t]/t^{n+1}) \arrow{r} \arrow{d} & X \otimes \displaystyle\bigoplus_{i \geq n+1} \widetilde{\THH}(\S[t]/t^{n+1})_i \arrow{d} \\
	X \otimes \widetilde{\THH}(\S[t])_{<n} \arrow{r} & X \otimes \widetilde{\THH}(\S[t]/t^{n}) \arrow{r} & X \otimes \displaystyle\bigoplus_{i \geq n} \widetilde{\THH}(\S[t]/t^{n})_i
\end{tikzcd}
\]
where the right vertical map is induced by the composite
\[
	\widetilde{\THH}(\S[t]/t^{n+1})_i \to \widetilde{\THH}(\S[t]/t^{n})_i \hookrightarrow \widetilde{\THH}(\S[t]/t^n).
\]
Thus, it suffices to show that
\[
	\varprojlim_n \bigoplus_{i \geq n} (X \otimes \widetilde{\THH}(\S[t]/t^n)_i) \simeq 0,
\]
which now follows from Corollary~\ref{corollary:coconnectivity}. Indeed, we have that
\[
	\varprojlim_n \bigoplus_{i \geq n} (X \otimes \widetilde{\THH}(\S[t]/t^n)_i) \simeq \varprojlim_n \big( \varprojlim_m \bigoplus_{i \geq m} (X \otimes \widetilde{\THH}(\S[t]/t^n)_i) \big)
\]
since the diagonal $\N \to \N \times \N$ is an initial functor, and using Corollary~\ref{corollary:coconnectivity}, we conclude that the limit appearing in the parenthesis on the right hand side of the equivalence above vanishes. In conclusion, we have proved that the map appearing in~(\ref{equation:map_between_limits}) is an equivalence. To finish the proof, we show that the map of cyclotomic spectra
\[
	\displaystyle\prod_{n \geq 1} X \otimes \Sigma^\infty_+(S^1/C_n) \to \varprojlim (X \otimes \widetilde{\THH}(\S[t])_{<n}). 
\]
induced by the projection maps is an equivalence, where the cyclotomic structure on the product is defined in Example~\ref{example:cyclotomic_product}. This follows from the fact that if
\[
	\cdots \xrightarrow{\mathrm{proj}} A_1 \oplus A_2 \oplus A_3 \xrightarrow{\mathrm{proj}} A_1 \oplus A_2 \xrightarrow{\mathrm{proj}} A_1
\]
is a tower of abelian groups, then the limit is given by the product $\prod_{n \geq 1} A_n$.
\end{proof}

\subsection{Curves on K-theory} \label{subsection:curves}
As an application of Theorem~\ref{theorem:comparison_TR} and Proposition~\ref{proposition:cyclotomic_structure_limit}, we obtain the desired description of $\TR$ evaluated on a connective $\E_1$-ring $R$ in terms of the spectrum of curves on $\K(R)$ extending work of Hesselholt~\cite{Hes96} and Betley--Schlichtkrull~\cite{BS05}. We will begin by recalling the following notation (see the discussion following Lemma~\ref{lemma:pushout_pointed_monoids}).

\begin{notation} \label{notation:relative_theories}
Recall that for every integer $n \geq 1$, the following square is a pushout of $\E_\infty$-rings
\[
\begin{tikzcd}
	\S[t] \arrow{r}{t \mapsto t^n} \arrow{d}{t \mapsto 0} & \S[t] \arrow{d} \\
	\S \arrow{r} & \S[t]/t^n
\end{tikzcd}
\]
In particular, there is a map of $\E_\infty$-rings $\S[t]/t^n \to \S$ determined by the assignment $t \mapsto 0$. If $R$ is a connective $\E_1$-ring, then we  define the $\E_1$-ring $R[t]/t^n$ by $R[t]/t^n = R \otimes \S[t]/t^n$, and we have that $\pi_\ast(R[t]/t^n) \simeq (\pi_\ast R)[t]/t^n$. We obtain a map of connective $\E_1$-rings $R[t]/t^n \to R$ such that the kernel of the induced ring homomorphism
\[
	\pi_0(R[t]/t^n) \simeq (\pi_0 R)[t]/t^n \to \pi_0 R
\]
is given by the nilpotent ideal $(t)$. If $E : \Alg^{\mathrm{cn}} \to \Sp$ is a functor, then we will let $E(R[t]/t^n, (t))$ denote the fiber of the induced map of spectra $E(R[t]/t^n) \to E(R)$. 
\end{notation}

We recall the definition of the spectrum of curves on algebraic $\K$-theory following Hesselholt~\cite{Hes96}, which is based on a previous variant studied by Bloch~\cite{Blo77} in his work on the relationship between algebraic $\K$-theory and crystalline cohomology. 

\begin{definition}
The spectrum of curves on $\K$-theory is defined by
\[
	\mathrm{C}(R) = \varprojlim \Omega \K(R[t]/t^n, (t))
\]
for every connective $\E_1$-ring $R$. 
\end{definition}

A fundamental result of Hesselholt~\cite[Theorem 3.1.10]{Hes96} asserts that if $R$ is a discrete commutative $\Z/p^j$-algebra for some $j \geq 1$, then there is a natural equivalence of spectra $\TR(R) \simeq \mathrm{C}(R)$. As a consequence, Hesselholt~\cite[Theorem C]{Hes96} proves that the homotopy groups of the $p$-typical summand of $\mathrm{C}(A)$ is isomorphic to the de Rham--Witt complex $\mathrm{W}\Omega_A^{\ast}$ in the case where $A$ is a smooth algebra over a perfect field of characteristic $p$. In~\cite[Theorem 1.3]{BS05}, Betley--Schlichtkrull establish a variant of the result of Hesselholt for topological cyclic homology on discrete associative rings after profinite completion, where the inverse limit in the definition of the spectrum of curves on $\K$-theory is replaced by a limit over a diagram which additionally encodes the transfer maps $R[t]/t^m \to R[t]/t^{mn}$ determined by $t \mapsto t^n$. Our main result is the following: 

\begin{theorem} \label{theorem:TR_curves_TC}
There is a natural equivalence of spectra
\[
	\TR(X) \simeq \varprojlim \Omega \TC(X \otimes \widetilde{\THH}(\S[t]/t^n))
\]
for every cyclotomic spectrum $X$ whose underlying spectrum is bounded below. 
\end{theorem}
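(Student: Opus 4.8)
The plan is to push $\varprojlim\Omega$ inside, identify the resulting cyclotomic spectrum via Proposition~\ref{proposition:cyclotomic_structure_limit}, and then compare the Nikolaus--Scholze equalizer formula for $\TC$ with that for $\TR = \map_{\CycSp}(\widetilde{\THH}(\S[t]),-)$.

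First I would note that $\TC(-)=\map_{\CycSp}(\S,-)$ and $\Omega$ both preserve limits, so there is a natural equivalence
\[
\varprojlim\Omega\TC\bigl(X\otimes\widetilde{\THH}(\S[t]/t^n)\bigr)\simeq\Omega\TC\Bigl(\varprojlim\bigl(X\otimes\widetilde{\THH}(\S[t]/t^n)\bigr)\Bigr),
\]
the limit on the right being taken in $\CycSp$ along the maps induced by $\S[t]/t^{n+1}\to\S[t]/t^n$. By Proposition~\ref{proposition:cyclotomic_structure_limit} the argument is $Y:=\prod_{n\geq 1}X\otimes\Sigma^\infty_+(S^1/C_n)$, equipped with the cyclotomic structure of Example~\ref{example:cyclotomic_product}; so it suffices to construct a natural equivalence $\TC(Y)\simeq\Sigma\,\TR(X)$ of spectra, using throughout that the underlying spectrum of $X$ is bounded below.

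Next I would apply Nikolaus--Scholze~\cite[Proposition II.1.5]{NS18} to both sides. For $\TC(Y)=\map_{\CycSp}(\S,Y)$ this gives the equalizer of $Y^{h\T}\rightrightarrows\prod_p(Y^{tC_p})^{h\T}$, the two maps being induced by the cyclotomic Frobenius of $Y$ and by $Y^{h\T}\simeq(Y^{hC_p})^{h\T/C_p}\xrightarrow{\mathrm{can}}(Y^{tC_p})^{h\T/C_p}$. For $\TR(X)$, writing $\widetilde{\THH}(\S[t])\simeq\bigoplus_{i\geq 1}\Sigma^\infty_+(S^1/C_i)$ and using $X^{hC_i}\simeq\map_{\Sp^{\B\T}}(\Sigma^\infty_+(S^1/C_i),X)$, one obtains (as in the Remark after Definition~\ref{definition:TR}) the equalizer of $\prod_{i\geq 1}X^{hC_i}\rightrightarrows\prod_p\prod_{i\geq 1}(X^{tC_p})^{hC_i}$, with "top" map induced by $\varphi^X_p\colon X\to X^{tC_p}$ and "bottom" map induced by $X^{hC_i}\simeq(X^{hC_p})^{hC_{i/p}}\xrightarrow{\mathrm{can}}(X^{tC_p})^{hC_{i/p}}$ for $p\mid i$. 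The bridge is the Spanier--Whitehead self-duality of $\Sigma^\infty_+(S^1/C_n)$ in $\Sp^{\B\T}$: it is dualizable with dual $\Sigma^{\infty-1}_+(S^1/C_n)$ — this is the $\T$-equivariant refinement of the identification $\map_{\Sp}(\widetilde{\THH}(\S[t]),\S)\simeq\prod_{n\geq 1}\Sigma^{\infty-1}_+(S^1/C_n)$, and the "inverse" residual $\T$-action on the dual agrees with the standard one because $\T$ is abelian and $S^1/C_n\simeq\T$. Consequently $(Z\otimes\Sigma^\infty_+(S^1/C_n))^{h\T}\simeq\Sigma\,Z^{hC_n}$ naturally in $Z\in\Sp^{\B\T}$, whence $Y^{h\T}\simeq\Sigma\prod_{n}X^{hC_n}$. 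Combining this with the computation underlying Example~\ref{example:cyclotomic_product} — namely that $(X\otimes\Sigma^\infty_+(S^1/C_n))^{tC_p}$ vanishes for $p\nmid n$ (the $C_p$-action on $S^1/C_n$ is free) and is $X^{tC_p}\otimes\Sigma^\infty_+(S^1/C_{n/p})$ as a $\T/C_p\simeq\T$-spectrum for $p\mid n$ (the $C_p$-action is trivial and $\Sigma^\infty_+(S^1/C_n)$ is dualizable) — yields $(Y^{tC_p})^{h\T}\simeq\Sigma\prod_{m}(X^{tC_p})^{hC_m}$.

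Finally I would check that, under these identifications and the common desuspension by $\Sigma$, the two parallel maps of the $\TC(Y)$-equalizer go over to the two parallel maps of the $\TR(X)$-equalizer. The Frobenius of $Y$ from Example~\ref{example:cyclotomic_product} acts on the $n$-th factor as $\varphi^X_p\otimes\mathrm{id}$ followed by the lax structure map of the Tate construction, landing in the factor indexed by $pn$; after the reindexing $\Sigma^\infty_+(S^1/C_{pn})^{tC_p}\simeq\Sigma^\infty_+(S^1/C_n)$ and passage to $(-)^{h\T}$, this is exactly the $\varphi^X_p$-induced "top" map of $\TR(X)$, and likewise the canonical map of $\TC(Y)$ matches the $\mathrm{can}$-induced "bottom" map of $\TR(X)$. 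Passing to equalizers produces a natural equivalence $\TC(Y)\simeq\Sigma\,\TR(X)$, and applying $\Omega$ gives the claimed equivalence; naturality in $X$ is clear from the construction. The main obstacle is precisely this last matching step: one must propagate the residual $\T/C_p$-actions, the "$n$-th root" reindexing $\Sigma^\infty_+(S^1/C_{pn})^{tC_p}\simeq\Sigma^\infty_+(S^1/C_n)$, and the Wirthmüller-type degree shift coherently across all factors and all primes, and verify that the self-duality equivalence $(Z\otimes\Sigma^\infty_+(S^1/C_n))^{h\T}\simeq\Sigma\,Z^{hC_n}$ is compatible with the cyclotomic Frobenius on both sides.
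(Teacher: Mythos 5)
Your proposal is essentially the paper's proof run from the right-hand side to the left rather than the reverse, using the same ingredients: the Nikolaus--Scholze equalizer formula~\cite[Proposition~II.1.5]{NS18}, $\T$-equivariant Atiyah duality for $\Sigma^\infty_+(S^1/C_n)$ and the exchange of products with the Tate construction (Example~\ref{example:cyclotomic_product}), and Proposition~\ref{proposition:cyclotomic_structure_limit}. The coherence you flag as the ``main obstacle'' --- checking that the duality equivalence intertwines the two Frobenius-type maps across the $n$-th root reindexing and degree shift --- is exactly what the paper's Lemma~\ref{lemma:tate_map_equivalence} supplies, so your plan closes once that lemma is invoked.
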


In the course of the proof of Theorem~\ref{theorem:TR_curves_TC}, we will need the following result:

\begin{lemma} \label{lemma:tate_map_equivalence}
If $X$ is a spectrum with $\T$-action, then the following $\T$-equivariant map of spectra
\[
\begin{tikzcd}
	X^{tC_p} \otimes \Sigma^\infty_+(S^1/C_n) \arrow{rr}{\id \otimes (\can \circ \psi_p)} & & X^{tC_p} \otimes \Sigma^\infty_+(S^1/C_{pn})^{tC_p} \arrow{r}{\ell} & (X \otimes \Sigma^\infty_+(S^1/C_{pn}))^{tC_p}
\end{tikzcd}
\]
is an equivalence for every prime $p$ and every integer $n \geq 1$, where $\ell$ denotes the map induced by the lax symmetric monoidal structure on the Tate construction $(-)^{tC_p}$.  
\end{lemma}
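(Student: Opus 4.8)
The plan is to reduce the statement to the projection formula for the Tate construction. Since the forgetful functor $\Sp^{\B\T} \to \Sp$ is conservative, it suffices to show that the displayed composite is an equivalence of underlying spectra. The key observation is that the $C_p \subseteq \T$-action on $S^1/C_{pn}$ is trivial, because $C_p$ is contained in $C_{pn}$; furthermore $S^1/C_{pn}$ is homeomorphic to $S^1$, so $\Sigma^\infty_+(S^1/C_{pn})$ is a perfect spectrum which, as a Borel $C_p$-spectrum, is the inflation of $\S \oplus \S[1]$.

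Using the description of the Frobenius lift from Example~\ref{example:epibarconstruction_N} together with the triviality of the action on the target, the spectrum-level $p$-th Frobenius lift $\psi_p \colon \Sigma^\infty_+(S^1/C_n) \to \Sigma^\infty_+(S^1/C_{pn})^{hC_p}$ is the composite of $\Sigma^\infty_+\rho$, where $\rho \colon S^1/C_n \to S^1/C_{pn}$, $[x] \mapsto [\sqrt[p]{x}]$ is the $p$-th root map (well-defined since $C_p \subseteq C_{pn}$), with the canonical map $\Sigma^\infty_+(S^1/C_{pn}) \to \Sigma^\infty_+(S^1/C_{pn})^{hC_p}$ induced by $\B C_p \to \ast$. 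Under the homeomorphisms $S^1/C_n \cong S^1 \cong S^1/C_{pn}$ given by $[x] \mapsto x^n$ and $[y] \mapsto y^{pn}$, the map $\rho$ becomes the identity, so $\Sigma^\infty_+\rho$ is an equivalence. Thus the composite in the statement is identified with $\id \otimes \Sigma^\infty_+\rho$ followed by
\[
	X^{tC_p} \otimes \Sigma^\infty_+(S^1/C_{pn}) \to X^{tC_p} \otimes \Sigma^\infty_+(S^1/C_{pn})^{hC_p} \xrightarrow{\can} X^{tC_p} \otimes \Sigma^\infty_+(S^1/C_{pn})^{tC_p} \xrightarrow{\ell} (X \otimes \Sigma^\infty_+(S^1/C_{pn}))^{tC_p},
\]
and it remains to see that this last composite is an equivalence. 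This is an instance of the projection formula: for any perfect spectrum $P$, regarded as a Borel $C_p$-spectrum with trivial action, and any spectrum $Y$ with $C_p$-action, the composite $Y^{tC_p} \otimes P \to Y^{tC_p} \otimes P^{hC_p} \xrightarrow{\can} Y^{tC_p} \otimes P^{tC_p} \xrightarrow{\ell} (Y \otimes P)^{tC_p}$ is an equivalence. Indeed, both $P \mapsto Y^{tC_p} \otimes P$ and $P \mapsto (Y \otimes P)^{tC_p}$ are exact functors $\Sp^{\perf} \to \Sp$, and this natural transformation between them is an equivalence for $P = \S$, where it is the identity by the unit coherence of the lax symmetric monoidal structure on $(-)^{tC_p}$; hence it is an equivalence for all perfect $P$ by the thick subcategory argument. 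Taking $P = \Sigma^\infty_+(S^1/C_{pn}) \simeq \S \oplus \S[1]$ completes the proof.

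The main obstacle is the identification in the second paragraph of the spectrum-level Frobenius lift $\psi_p$ with the composite of $\Sigma^\infty_+\rho$ and the canonical map: a priori $\psi_p$ is assembled from the unstable map $S^1/C_n \to (S^1/C_{pn})^{hC_p}$ of Example~\ref{example:epibarconstruction_N} together with the assembly map $\Sigma^\infty_+((S^1/C_{pn})^{hC_p}) \to (\Sigma^\infty_+(S^1/C_{pn}))^{hC_p}$, and one must check by a short diagram chase — using that the unstable $\psi_p$ factors through the inclusion of $S^1/C_{pn}$ as constant maps — that this equals the stated composite. Everything else is formal.
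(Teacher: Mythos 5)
The paper itself gives no proof here beyond a citation to Speirs~\cite[Lemma 8]{Spe20}, so a line-by-line comparison with the paper's "own proof" is not possible. On its merits, your argument is correct and in fact is essentially the same argument that appears in the cited reference: the whole point is that the restricted $C_p$-action on $S^1/C_{pn}$ is trivial (since $C_p \subseteq C_{pn}$), that $\Sigma^\infty_+(S^1/C_{pn}) \simeq \S \oplus \S[1]$ is perfect, and that the map is then a specialization of the standard projection formula $Y^{tC_p}\otimes P \xrightarrow{\sim}(Y\otimes P)^{tC_p}$ for perfect $P$ with trivial action, proved by the thick-subcategory argument from the unit coherence at $P = \S$. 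Your reduction to underlying spectra by conservativity of $\Sp^{\B\T}\to\Sp$, the identification that the $p$-th root map $\rho$ becomes the identity under $S^1/C_n \cong S^1 \cong S^1/C_{pn}$ (so $\Sigma^\infty_+\rho$ is an equivalence), and the exactness of the two functors of $P$ are all correct.

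The one step that genuinely needs an argument is the one you flag yourself: that the stable $\psi_p$ agrees with the composite of $\Sigma^\infty_+\rho$ and the constant-map unit $\Sigma^\infty_+(S^1/C_{pn}) \to (\Sigma^\infty_+(S^1/C_{pn}))^{hC_p}$. This is not hard but is not literally a diagram chase either; the clean way to say it is that the spectral Frobenius lift $\psi_p$ is, by definition, the $(\mathrm{res}_p\dashv(-)^{hC_p})$-adjoint of $\Sigma^\infty_+$ applied to the twisted-equivariant self-map encoded by the $\W$-action, and that $\Sigma^\infty_+$ commutes with $\mathrm{res}_p$, so by the mate (Beck--Chevalley) formalism the stable $\psi_p$ is the assembly map $\Sigma^\infty_+((S^1/C_{pn})^{hC_p}) \to (\Sigma^\infty_+(S^1/C_{pn}))^{hC_p}$ precomposed with $\Sigma^\infty_+$ of the unstable Frobenius lift; since the unstable Frobenius lift factors through the inclusion of constant maps, the claimed identification follows by naturality of the assembly map. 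If you spell this out, the proof is complete and is as elementary as the statement deserves.
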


\begin{proof}
See Speirs~\cite[Lemma 8]{Spe20}. 
\end{proof}

\begin{proof}[Proof of Theorem~\ref{theorem:TR_curves_TC}]
There is a natural equivalence of spectra 
\[
	\TR(X) \simeq \map_{\CycSp}(\widetilde{\THH}(\S[t]), X)
\]
by virtue of Theorem~\ref{theorem:comparison_TR}, so we conclude that
\begin{align*}
	\TR(X) &\simeq \mathrm{Eq}\Big(
	\begin{tikzcd}[ampersand replacement = \&]
		\displaystyle\map_{\Sp^{\B\T}}\Big(\bigoplus_{n \geq 1} \Sigma^\infty_+(S^1/C_n), X\Big) \arrow[yshift=0.7ex]{r} \arrow[yshift=-0.7ex]{r} \& \displaystyle\prod_{p} \map_{\Sp^{\B\T}}\Big(\bigoplus_{n \geq 1} \Sigma^\infty_+(S^1/C_n), X^{tC_p}\Big)
	\end{tikzcd}
	\Big) \\
	&\simeq \mathrm{Eq}\Big(
	\begin{tikzcd}[ampersand replacement = \&]
		\displaystyle\prod_{n \geq 1} \map_{\Sp^{\B\T}}(\Sigma^\infty_+(S^1/C_n), X) \arrow[yshift=0.7ex]{r} \arrow[yshift=-0.7ex]{r} \& \displaystyle\prod_{p} \prod_{n \geq 1} \map_{\Sp^{\B\T}}(\Sigma^\infty_+(S^1/C_n), X^{tC_p})
	\end{tikzcd}
	\Big)
\end{align*}
by~\cite[Proposition II.1.5]{NS18}, where the top map is induced by the cyclotomic structure of $\widetilde{\THH}(\S[t])$, and the bottom map is induced by the cyclotomic structure of $X$. Note that $\Sigma^\infty_+(S^1/C_n)$ is dualizable regarded as a spectrum with $\T$-action for every $n \geq 1$. Indeed, we have that $\T$-equivariant Atiyah duality identifies its dual with $\Sigma^{\infty-1}_+(S^1/C_n)$, and the following $\T$-equivariant map
\[
\begin{tikzcd}
	X^{tC_p} \otimes \Sigma^{\infty-1}_+(S^1/C_n) \arrow{rr}{\id \otimes (\can \circ \psi_p)} & & X^{tC_p} \otimes \Sigma^{\infty-1}_+(S^1/C_{pn})^{tC_p} \arrow{r}{\ell} & (X \otimes \Sigma^{\infty-1}_+(S^1/C_{pn}))^{tC_p}
\end{tikzcd}
\]
is an equivalence for every prime $p$ and every integer $n \geq 1$ by virtue of Lemma~\ref{lemma:tate_map_equivalence} above. It follows that the second equalizer above is equivalent to
\[
	\mathrm{Eq}\Big(
	\begin{tikzcd}
		\displaystyle\prod_{n \geq 1} \map_{\Sp^{\B\T}}(\S, X \otimes \Sigma^{\infty-1}_+(S^1/C_n)) \arrow[yshift=0.7ex]{r} \arrow[yshift=-0.7ex]{r} & \displaystyle\prod_{p} \prod_{n \geq 1} \map_{\Sp^{\B\T}}(\S, (X \otimes \Sigma^{\infty-1}_+(S^1/C_{pn}))^{tC_p})
	\end{tikzcd}
	\Big),
\]
where the top map is induced by the trivial cyclotomic structure of the sphere spectrum $\S$, and the bottom map carries $f : \S \to X \otimes \Sigma^{\infty-1}_+(S^1/C_n)$ to the following $\T$-equivariant map
\[
	\S \xrightarrow{f} X \otimes \Sigma^{\infty-1}_+(S^1/C_n) \xrightarrow{\varphi_p} X^{tC_p} \otimes \Sigma^{\infty-1}_+(S^1/C_n) \xrightarrow{\simeq} (X \otimes \Sigma^{\infty-1}_+(S^1/C_{pn}))^{tC_p}
\]
As in Example~\ref{example:cyclotomic_product}, the canonical map of spectra with $\T$-action
\[
	\big( \prod_{n \geq 1} X \otimes \Sigma^{\infty-1}_+(S^1/C_{pn}) \big)^{tC_p} \to \prod_{n \geq 1} (X \otimes \Sigma^{\infty-1}_+(S^1/C_{pn}))^{tC_p}
\]
is an equivalence, so we conclude that the equalizer above is equivalent to
\begin{align*}
	&\mathrm{Eq}\Big(
	\begin{tikzcd}[ampersand replacement = \&]
		\displaystyle\map_{\Sp^{\B\T}}\Big(\S, \prod_{n \geq 1} X \otimes \Sigma^{\infty-1}_+(S^1/C_n) \Big) \arrow[yshift=0.7ex]{r} \arrow[yshift=-0.7ex]{r} \& \displaystyle\prod_{p} \map_{\Sp^{\B\T}}\Big(\S, \Big(\prod_{n \geq 1} X \otimes \Sigma^{\infty-1}_+(S^1/C_{pn})\Big)^{tC_p}\Big)
	\end{tikzcd}
	\Big) \\
	\simeq \, & \mathrm{Eq}\Big(
	\begin{tikzcd}[ampersand replacement = \&]
		\displaystyle\map_{\Sp^{\B\T}}\Big(\S, \prod_{n \geq 1} X \otimes \Sigma^{\infty-1}_+(S^1/C_n) \Big) \arrow[yshift=0.7ex]{r} \arrow[yshift=-0.7ex]{r} \& \displaystyle\prod_{p} \map_{\Sp^{\B\T}}\Big(\S, \Big(\prod_{n \geq 1} X \otimes \Sigma^{\infty-1}_+(S^1/C_{n})\Big)^{tC_p}\Big)
	\end{tikzcd}
	\Big),
\end{align*}
where the equivalence uses that $(X \otimes \Sigma^\infty_+(S^1/C_n))^{tC_p} \simeq 0$ provided that $p$ does not divide $n$. Finally, using Proposition~\ref{proposition:cyclotomic_structure_limit}, we conclude that $\TR(X)$ is equivalent to the following spectrum
\[
	\varprojlim \Omega \mathrm{Eq}\Big(
	\begin{tikzcd}
		\displaystyle\map_{\Sp^{\B\T}}\big(\S, X \otimes \widetilde{\THH}(\S[t]/t^n) \big) \arrow[yshift=0.7ex]{r} \arrow[yshift=-0.7ex]{r} & \displaystyle\prod_{p} \map_{\Sp^{\B\T}}\big(\S, (X \otimes \widetilde{\THH}(\S[t]/t^n))^{tC_p}\big)
	\end{tikzcd}
	\Big),
\]
where the top map is induced by the trivial cyclotomic structure of the sphere spectrum $\S$, and the bottom map is induced by the cyclotomic structure of $X \otimes \widetilde{\THH}(\S[t]/t^n)$. We conclude that
\[
	\TR(X) \simeq \varprojlim \Omega \map_{\CycSp}(\S, X \otimes \widetilde{\THH}(\S[t]/t^n)) \simeq \varprojlim \Omega \TC(X \otimes \widetilde{\THH}(\S[t]/t^n))
\]
which shows the desired statement. 
\end{proof}

Using Theorem~\ref{theorem:TR_curves_TC}, we obtain the following result: 

\begin{corollary} \label{corollary:TR_curves}
If $R$ is a connective $\E_1$-ring, then there is a natural equivalence of spectra
\[
	\TR(R) \simeq \varprojlim \Omega \TC(R[t]/t^n, (t)).
\]
In particular, there is a natural equivalence of spectra $\TR(R) \simeq \mathrm{C}(R)$. 
\end{corollary}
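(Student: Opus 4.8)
The plan is to obtain Corollary~\ref{corollary:TR_curves} as a direct consequence of Theorem~\ref{theorem:TR_curves_TC} together with the Dundas--Goodwillie--McCarthy theorem~\cite{BGM12}. First I would specialize Theorem~\ref{theorem:TR_curves_TC} to the cyclotomic spectrum $X = \THH(R)$: since $R$ is a connective $\E_1$-ring, $\THH(R)$ is connective and in particular bounded below, so the theorem yields a natural equivalence
\[
	\TR(R) \simeq \TR(\THH(R)) \simeq \varprojlim \Omega\, \TC\big(\THH(R) \otimes \widetilde{\THH}(\S[t]/t^n)\big).
\]
The remaining task is to recognize the right-hand side as $\varprojlim \Omega\, \TC(R[t]/t^n, (t))$, and then to pass from $\TC$ to $\K$.

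For the first of these, I would use that $\THH$ is symmetric monoidal as a functor to cyclotomic spectra, so that $\THH(R[t]/t^n) = \THH(R \otimes \S[t]/t^n) \simeq \THH(R) \otimes \THH(\S[t]/t^n)$ in $\CycSp$. Tensoring the fiber sequence of cyclotomic spectra $\widetilde{\THH}(\S[t]/t^n) \to \THH(\S[t]/t^n) \to \S$ — in which $\S$ carries the trivial cyclotomic structure and the second map is induced by $t \mapsto 0$ — with $\THH(R)$, and using that $\otimes$ is exact in each variable on the stable $\infty$-category $\CycSp$, identifies $\THH(R) \otimes \widetilde{\THH}(\S[t]/t^n)$ with the fiber of the map $\THH(R[t]/t^n) \to \THH(R)$ induced by $R[t]/t^n \to R$; that is, with $\THH(R[t]/t^n, (t))$, compatibly with the maps in the tower over $n$. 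Since $\TC = \map_{\CycSp}(\S, -)$ preserves limits, hence fiber sequences, applying $\TC$ gives $\TC(\THH(R) \otimes \widetilde{\THH}(\S[t]/t^n)) \simeq \TC(R[t]/t^n, (t))$ \emph{naturally in $n$}, whence $\TR(R) \simeq \varprojlim \Omega\, \TC(R[t]/t^n, (t))$.

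To deduce $\TR(R) \simeq \mathrm{C}(R)$, I would invoke the Dundas--Goodwillie--McCarthy theorem~\cite{BGM12} for the map of connective $\E_1$-rings $R[t]/t^n \to R$. Since this map induces the surjection $\pi_0(R[t]/t^n) \simeq (\pi_0 R)[t]/t^n \to \pi_0 R$ with nilpotent kernel $(t)$, the square
\[
\begin{tikzcd}
	\K(R[t]/t^n) \arrow{r} \arrow{d} & \K(R) \arrow{d} \\
	\TC(R[t]/t^n) \arrow{r} & \TC(R)
\end{tikzcd}
\]
is a pullback of spectra, so the relative terms agree: $\K(R[t]/t^n, (t)) \simeq \TC(R[t]/t^n, (t))$, naturally in $n$. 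Passing to $\varprojlim \Omega$ then gives $\mathrm{C}(R) = \varprojlim \Omega\, \K(R[t]/t^n, (t)) \simeq \varprojlim \Omega\, \TC(R[t]/t^n, (t)) \simeq \TR(R)$.

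Since the corollary is essentially a packaging of the two cited theorems, there is no serious obstacle; the one step that must be carried out with care is the monoidal identification $\THH(R) \otimes \widetilde{\THH}(\S[t]/t^n) \simeq \THH(R[t]/t^n, (t))$, where one has to check that the cyclotomic structures — not merely the underlying spectra with $\T$-action — are matched, and that the identification is natural in $n$, so that one genuinely obtains an equivalence of towers and hence of limits.
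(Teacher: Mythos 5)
Your proof is correct and takes essentially the same route as the paper: you specialize Theorem~\ref{theorem:TR_curves_TC} to $X = \THH(R)$, identify $\THH(R) \otimes \widetilde{\THH}(\S[t]/t^n) \simeq \THH(R[t]/t^n, (t))$ using symmetric monoidality of $\THH$ (which you unwind slightly more carefully by tensoring the fiber sequence $\widetilde{\THH}(\S[t]/t^n) \to \THH(\S[t]/t^n) \to \S$ with $\THH(R)$ and using exactness of $\otimes$ on $\CycSp$), and then invoke the Dundas--Goodwillie--McCarthy theorem for the nilpotent extension $R[t]/t^n \to R$. The paper's argument is more compressed but identical in substance.
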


\begin{proof}
If $R$ is a connective $\E_1$-ring, then there is an equivalence of cyclotomic spectra
\[
	\THH(R) \otimes \widetilde{\THH}(\S[t]/t^n) \simeq \widetilde{\THH}(R[t]/t^n)
\]
since $\THH$ is a symmetric monoidal functor, so the first assertion follows readily from Theorem~\ref{theorem:TR_curves_TC}. Furthermore, there is a natural equivalence
\[
	\varprojlim \Omega \TC(R[t]/t^n, (t)) \simeq \varprojlim \Omega \K(R[t]/t^n, (t))
\]
by virtue of the Dundas--Goodwillie--McCarthy theorem~\cite{BGM12}, which shows the desired statement. 
\end{proof}

\begin{remark} \label{remark:frobenius_TR_in_terms_of_curves}
In Corollary~\ref{corollary:TR_curves}, the Frobenius endomorphism of $\TR(R)$ can be expressed in terms of suitable transfer maps on the spectrum of curves on $\K$-theory. More precisely, the $k$th Frobenius endomorphism of $\TR(R)$ is induced by the transfer map $K(R[t]/t^n) \to K(R[t]/t^{kn})$ determined by the assignment $t \mapsto t^k$. In~\cite{BS05}, Betley--Schlichtkrull work with a refined version of the spectrum of curves on $\K$-theory which additionally takes these transfer maps into account. Consequently, they obtain a formula for $\TC$ evaluated on a discrete ring in terms of this variant of the spectrum of curves on $\K$-theory after profinite completion (cf.~\cite[Theorem 1.3]{BS05}).
\end{remark}

\bibliographystyle{abbrv}
\bibliography{TR} 

\end{document}